\newtheorem{introtheorem}{Theorem}
\newtheorem{theorem}{Theorem}[section]
\newtheorem{proposition}[theorem]{Proposition}
\newtheorem{corollary}[theorem]{Corollary}
\newtheorem{lemma}[theorem]{Lemma}
\newtheorem*{theorem*}{Theorem}
\theoremstyle{definition}
\newtheorem{remark}[theorem]{Remark}
\renewcommand{\bar}{\overline}
\renewcommand{\tilde}{\widetilde}
\renewcommand{\hat}{\widehat}
\renewcommand{\emptyset}{\varnothing}
\newcommand{\mrm}[1]{\mathrm{#1}}
\newcommand{\mbb}[1]{\mathbb{#1}}
\newcommand{\mcal}[1]{\mathcal{#1}}
\newcommand{\mfrak}[1]{\mathfrak{#1}}
\newcommand{\lp}{\left(}\newcommand{\rp}{\right)}
\newcommand{\define}[1]{{\textit{\textbf{#1}}}}
\newcommand{\sub}[1]{\mrm{sub}\lp#1\rp}
\newcommand{\class}[1]{\left[#1\right]}
\newcommand{\bmat}{\lp\begin{smallmatrix}}\newcommand{\emat}{\end{smallmatrix}\rp}
\newcommand{\unbased}[1]{#1^\circ}
\newcommand{\inv}[1]{#1^{-1}}
\newcommand{\rev}[1]{\bar{#1}}
\newcommand{\proj}[1]{\hat{#1}}
\newcommand{\lift}[1]{\tilde{#1}}
\newcommand{\Z}{\mbb{Z}}
\newcommand{\nnegZ}{\mbb{N}}
\newcommand{\R}{\mbb{R}}
\newcommand{\threereal}{\R^3}
\newcommand{\tworeal}{\R^2}
\newcommand{\C}{\mbb{C}}
\newcommand{\sC}{\C^\ast}
\newcommand{\nsl}{\mrm{SL}(n)}
\newcommand{\twosl}{\mrm{SL}(2)}
\newcommand{\pngl}{\mrm{PGL}(n)}
\newcommand{\ptwogl}{\mrm{PGL}(2)}
\newcommand{\sk}{\mcal{S}_n^q}
\newcommand{\usk}{\unbased{\lp\sk\rp}}
\newcommand{\ann}{\mfrak{A}}
\newcommand{\man}{\mfrak{M}}
\newcommand{\surf}{\mfrak{S}}
\newcommand{\intt}{\mfrak{I}}
\newcommand{\badset}{Q}
\newcommand{\altbadset}{Q^\prime}
\newcommand{\sym}{\mrm{Sym}}
\newcommand{\qroot}{q^{1/n}}
\newcommand{\braid}{\mrm{Brd}}
\newcommand{\gen}{\mcal{G}}
\newcommand{\indeq}{\overset{N}{=}}
\newcommand{\altindeq}{\overset{N^\prime}{=}}
\newcommand{\basgen}{\mcal{B}}
\newcommand{\tor}{\mfrak{T}}
\newcommand{\ssk}{\tilde{\mcal{S}}_n^q}
\newcommand{\punc}{\mfrak{P}}
\newcommand{\monoangle}{\mfrak{D}_1}
\newcommand{\biang}{\mfrak{D}_2}
\newcommand{\ttriang}{\mfrak{D}_3}
\newcommand{\source}{\unbased{W}_\mrm{source}}
\newcommand{\sink}{\unbased{W}_\mrm{sink}}
\newcommand{\idealtriang}{\mfrak{L}}
\newcommand{\splitidealtriang}{\hat{\idealtriang}}
\newcommand{\comp}{\mfrak{K}}
\newcommand{\wroot}{\omega}
\newcommand{\qtorus}[1]{\mcal{T}_n^\wroot(#1)}
\newcommand{\qtr}{\tau^q_\idealtriang}
\newcommand{\discrete}{\Theta_n}
\newcommand{\cornerlessdiscrete}{\discrete^\prime}
\newcommand{\cltorus}[1]{\mcal{T}_n^1(#1)}
\newcommand{\elemedgemat}[1]{S^\mrm{edge}_{#1}}
\newcommand{\elemleftmat}[1]{S^\mrm{left}_{#1}}
\newcommand{\elemrightmat}[1]{S^\mrm{right}_{#1}}
\newcommand{\edgemat}{M^\mrm{edge}}
\newcommand{\leftmat}{M^\mrm{left}}
\newcommand{\rightmat}{M^\mrm{right}}
\newcommand{\qleftmat}{L^\wroot}
\newcommand{\qrightmat}{R^\wroot}
\newcommand{\incl}{\iota}
\newcommand{\counit}{\epsilon}
\newcommand{\degree}{\mrm{deg}}
\newcommand{\fan}{\mcal{C}}
\title[Web basis for the annulus]{Monomial web basis for the SL(N) skein algebra of the twice punctured sphere}
\author[T. Cremaschi]{Tommaso Cremaschi}
\address{School of Mathematics, Trinity College Dublin, 17 Westland Row, Dublin 2, Ireland}
\email{cremasct@tcd.ie}
\author[D. C. Douglas]{Daniel C. Douglas}
\address{Department of Mathematics, Virginia Tech, 225 Stanger Street, Blacksburg, VA 24061, USA}
\email{dcdouglas@vt.edu}
\date{\today}
\thanks{The first author was partially supported by 101107744–DefHyp. The second author was partially supported by the Simons Foundation grant 327929.}
\begin{document}
%\layout%toggle for formatting  
\begin{abstract}
We give a new proof of a slightly modified version of a result of Queffelec--Rose, by constructing a linear basis for the $\nsl$ skein algebra of the twice punctured sphere for any non-zero complex number $q$, excluding finitely many roots of unity of small order.  In particular, the skein algebra is a commutative polynomial algebra in $n-1$ generators, where each generator is represented by an explicit $\nsl$ web, without crossings, on the surface.  This includes the case $q=1$, where the skein algebra is identified with the coordinate ring of the $\nsl$ character variety of the twice punctured sphere.  The proof of both the spanning and linear independence properties of the basis depends on the so-called $\nsl$ quantum trace map, due originally to Bonahon--Wong in the case $n=2$.  Two consequences of our method are that the quantum trace map and the so-called splitting map embed the polynomial algebra into the Fock--Goncharov quantum higher Teichm\"uller space and the L\^{e}--Sikora stated skein algebra, respectively, of the annulus.  We end by discussing the relationship with Fock--Goncharov duality.  
\end{abstract}
\maketitle

To a closed oriented three-dimensional manifold $\man$ containing a, possibly empty, framed oriented knot, or link, $K$ colored by a representation of the Lie group $\mrm{SU}(n)$, and to a natural number $k$ called the quantization parameter or level, Witten \cite{WittenCommMathPhys89} and later Reshetikhin--Turaev \cite{reshetikhinMR1091619}  assigned a numerical quantum invariant $Z$ now called the WRT invariant.  This number $Z$ is expressed in terms of a root of unity $q$ of order approximately $n+k$.  The WRT invariant satisfies certain local relations, called skein relations, and when $\man=\mfrak{S}^3$ is the sphere $Z$ essentially recovers the Jones--Reshetikhin--Turaev $\mrm{SU}(n)$ knot polynomial associated to the colored knot $K$.  For a general manifold $\man$, the computation of $Z$ bootstraps from the sphere case by using the fact that any such $\man$ can be obtained by integral Dehn surgery on a link, equivalently linked solid tori, in the sphere, and not to be confused with the link $K$.  

Witten's definition of $Z$ is by a physical path integral in Chern--Simons theory, very roughly speaking, integrating over all gauge equivalence classes of $\mrm{SU}(n)$ connections on $\man$ the trace of the monodromy of the knot in the representation.  In the classical limit as $k$ goes to infinity, $Z$ concentrates on the critical points of the integrand, namely the flat connections up to equivalence, which are the same as group homomorphisms from the fundamental group $\pi_1(\man)$ to $\mrm{SU}(n)$ considered up to conjugation.  Another name for this space of critical points is the $\mrm{SU}(n)$ character variety.  Character varieties are important objects of study in low-dimensional geometry and topology \cite{fockMR2233852, GoldmanInvent86, gunninghamMR4557403, HitchinTopology92, LabourieInvent06}.  The current paper is primarily concerned with studying a quantization of the character variety of the solid torus, equivalently the thickened annulus $\ann\times(0,1)$, called the $\mrm{SU}(n)$ skein algebra $\sk(\ann)$.  Elements of the skein algebra are represented as formal linear combinations of links in the solid torus, considered up to local skein relations.    

Reshetikhin--Turaev, also \cite{turaevMR1217386}, put Witten's construction on a more solid mathematical footing by defining the WRT invariant in terms of the representation theory of the quantum group $\mrm{SU}(n)_q$.  Lickorish \cite{lickorishMR1227009} formulated a more elementary and combinatorial approach to computing the $n=2$ invariant by purely skein theoretic methods.  One fundamental property employed to this end is that the skein algebra $\mcal{S}_2^q(\ann)$ of the annulus is isomorphic to the polynomial algebra $\C[x]$ in one variable, allowing for geometrically rich families of polynomials, such as Chebyshev polynomials, to be represented and manipulated pictorially.  Analogous skein constructions for the $n=3$ and  $n\geq2$, sometimes written $n=n$, invariants were developed in \cite{ohtsukiMR1457194} and \cite{yokotaMR1427678}, respectively, where the former took advantage of the property that the skein algebra $\mcal{S}_3^q(\ann)$ is isomorphic to the two variable polynomial algebra $\C[x,y]$.  

The primary contribution of the present paper is a new proof of a slightly modified version of the following result.

\begin{theorem*}[Queffelec--Rose \cite{queffelecMR3729501}]
The $\mrm{SU}(n)$ skein algebra of the annulus is isomorphic to a polynomial algebra $\C[x_1,x_2,\dots,x_{n-1}]$ in $n-1$ generators.  
\end{theorem*}

More precisely, we prove the following result.

\begin{introtheorem}\label{thm:intromaintheorem}
For all non-zero complex numbers $q$ except for finitely many roots of unity of small order approximately less than $n$, but allowing $q=1$, the skein algebra $\sk(\ann)$ of the annulus is isomorphic to a polynomial algebra $\C[x_1,x_2,\dots,x_{n-1}]$ in $n-1$ generators.  
\end{introtheorem}

See Theorem \ref{thm:maintheorem} for the most precise statement.  

To be more clear, there are multiple versions of the $\mrm{SU}(n)$, or $\nsl$, skein algebra, including Sikora's version described in Theorem \ref{thm:intromaintheorem}, some of which are briefly discussed now.  Based on knotted loops, the HOMFLYPT \cite{FreydBullAmerMathSoc85, Przytycki87ProcAmercMathSoc} and $n=2$ Kauffman \cite{KauffmanTopology87} skein algebras $\mcal{S}(\surf)_\mathrm{HOMFLYPT}$ and $\mcal{S}_2^q(\surf)$ for thickened surfaces $\surf\times(0,1)$, and more generally skein modules for three-dimensional manifolds $\man$, were initially studied by Turaev \cite{turaevMR0964255} and Przytycki \cite{PrzytyckiBullPolishAcad91}.  The, in general non-commutative, algebra structure is by superposition relative to the surface.  Kuperberg \cite{KuperbergCommMathPhys96} introduced the $n=3$ skein algebra $\mcal{S}_3^q(\surf)$ based on certain trivalent graphs, called $3$-webs, in addition to loops.  Sikora \cite{SikoraAlgGeomTop05} generalized Kuperberg's algebra to the $n=n$ skein algebra $\sk(\surf)$ by incorporating $n$-valent webs.  The skein algebra $\mcal{S}^q_n(\surf)_\mathrm{CKM}$ of Cautis--Kamnitzer--Morrison \cite{CautisMathAnn14} is based on colored trivalent graphs, and is the algebra described in the above theorem by Queffelec--Rose.  L\^{e}--Sikora's \cite{leMR3827810, LeArxiv21} stated skein algebra $\mcal{S}^q_n(\surf)_\mathrm{st}$ is similar to Sikora's skein algebra $\sk(\surf)$, but is sensitive to when the surface has nonempty boundary.  In particular, the skein algebra $\sk(\ann)$ of the closed annulus equals that of the open annulus, equivalently the twice punctured sphere, but the associated stated skein algebras are different.  

Note it is not claimed in Theorem \ref{thm:intromaintheorem} that the Sikora skein algebra $\sk(\ann)$ is not a polynomial algebra for the excluded roots of unity $q$, and it is indeed true that $\sk(\ann)$ is a polynomial algebra for $n=2,3$ for all $q$.  However, it is shown that for these excluded $q$ at least one of the generators $x_i$ in Theorem \ref{thm:intromaintheorem} is equal to $0$ in the skein algebra.  

It is well-known \cite{SikoraAlgGeomTop05} that the Sikora skein algebra is isomorphic to the CKM skein algebra for $n\leq3$.  As the present paper developed, it was proved by Poudel \cite{poudel2023comparisonslnspidercategories} that such an isomorphism also exists for $n>3$ for all $q$ except possibly for roots of unity of small order. Queffelec--Rose's proof of the above theorem was by categorical and representation theoretic methods; see also \cite{queffelecMR4715537}.  Moreover, their result is valid for any $q$, or more generally for the skein algebra viewed as a $\Z[q,q^{-1}]$-module.  Our proof of Theorem \ref{thm:intromaintheorem} has the advantage of being more connected to the underlying cluster geometry discussed later in this introduction.

The HOMFLYPT skein algebra $\mcal{S}(\surf)_\mathrm{HOMFLYPT}$ has a longer history than that of the skein algebra $\sk(\surf)$, especially in the case of the annulus $\surf=\ann$.  Turaev \cite{turaevMR0964255}, see also \cite{lickorishMR0918536}, proved that $\mcal{S}(\ann)_\mathrm{HOMFLYPT}$ is a polynomial algebra in infinitely many generators, corresponding to the non-trivial elements of the fundamental group $\pi_1(\ann)\cong\mbb{Z}$.  This algebra also connects to combinatorics and number theory, for example, the sub-algebra of $\mcal{S}(\ann)_\mathrm{HOMFLYPT}$ generated by $\Z_{>0}\subset\pi_1(\ann)$ can be thought of as an algebra of symmetric functions in infinitely many variables \cite{mortonMR1983094, mortonMR2439627}, and for $\surf$ the torus $\mcal{S}(\surf)_\mathrm{HOMFLYPT}$ is isomorphic to an elliptic Hall algebra \cite{mortonMR3626565} and naturally acts on $\mcal{S}(\ann)_\mathrm{HOMFLYPT}$ since $\surf$ is the boundary of a solid torus.  

On the other hand, as indicated above, the skein algebra $\sk(\surf)$ is more closely tied to the geometry and topology of character varieties.  More precisely, when $q=1$ then $\mcal{S}_n^1(\surf)$ is isomorphic to the unreduced coordinate ring of the $\nsl$ character variety of $\surf$ \cite{bullock1997rings, PrzytyckiBullPolishAcad91, SikoraTrans01}.  This means that the reduced coordinate ring is the quotient $\mcal{S}_n^1(\surf)/\sqrt{0}$ by the set of nilpotent elements.  The isomorphism sends a loop, or web, to the associated trace function on the character variety.  In particular, the proof of Theorem \ref{thm:intromaintheorem} gives another proof that the coordinate ring of the $\nsl$ character variety of the annulus is reduced.  See also \cite{FLORENTINO201432, sikora2014}.  

The skein algebra $\sk(\surf)$ is naturally a quotient of the $q$-evaluated HOMFLYPT skein algebra $\mcal{S}_n^q(\surf)_\mathrm{HOMFLYPT}$.  Said in another way, multiloops span $\sk(\surf)$ but are not linearly independent.  For instance, essentially by Theorem \ref{thm:intromaintheorem}, Turaev's loop basis for the HOMFLYPT skein algebra of the annulus becomes linearly dependent in $\sk(\ann)$; compare Lemma \ref{lem:inductionlargei} and Equation \eqref{eq:rootsinduction1}.  From the point of view of character varieties when $q=1$, this dependence reflects that there are non-trivial relations among trace functions of loops, best captured pictorially in terms of webs \cite{SikoraTrans01}.  Alternatively, when $q$ is not a root of unity of small order, then $\sk(\surf)$ is spanned by planar webs, namely webs without crossings; see Figure \ref{fig:more-relations}.  The basis elements for $\sk(\ann)$ of Theorem \ref{thm:intromaintheorem} are planar webs, thus providing a solution to the problem of finding a linearly independent spanning subset of planar webs.  

It is a non-trivial problem to find explicit small generating sets of skein algebras \cite{baseilhac2023unrestrictedquantummodulialgebras, Frohman22MathZ}, and harder still to find presentations.  Indeed, the latter has only been accomplished for a handful of surfaces; see \cite{cooke2022higherrankaskeywilsonalgebras, mortonMR3626565} for some more recent progress.  Theorem \ref{thm:intromaintheorem} provides a finite presentation for the skein algebra $\sk(\ann)$ of the annulus.  

Constructing explicit linear bases of skein algebras is another difficult problem.  For small values of $n$ bases have long been known \cite{hosteMR1238877, KuperbergCommMathPhys96}, namely the basis of simple essential multi-curves for $n=2$ and of non-elliptic webs for $n=3$, but bases have been more elusive for $n>3$ even for the simplest surfaces; see however \cite{fontaineMR2889146, gaetz2024rotationinvariantwebbaseshourglass, queffelecMR3729501}.  More recently, ideas from hyperbolic and cluster geometry have led to a general blueprint for constructing bases for $\sk(\surf)$, at least when the surface $\surf$ is punctured.  When $n$ is $2$ or $3$ this blueprint reproduces the previously mentioned bases, and the present paper can be considered as a demonstration of concept in the case $n=n$ when the surface is the annulus.

In more detail, such a blueprint is motivated by the concept from mirror symmetry of Fock--Goncharov duality  \cite{FockArxiv97, fockMR2233852, grossMR3758151} or, rather, Fock--Goncharov--Shen duality \cite{goncharovMR3418241, goncharov2022quantum} in the current setting.  Roughly speaking, this says that there is a canonical linear basis of the coordinate ring of the $\nsl$ character variety of $\surf $ whose elements are naturally indexed by a discrete sub-monoid $\fan_\idealtriang\subset\nnegZ^N$, the positive integer tropical points, defined with respect to an ideal triangulation $\idealtriang$ of the punctured surface $\surf$, and where $N$ is the dimension of the character variety.  Here, naturality means that this indexing behaves well under changing the triangulation, in particular transforming according to a tropical cluster mutation formula.  Such canonical bases have strong positivity properties; see for example Section \ref{sssec:quantum-traces-from-spectral-networks}, which relates to the physical point of view of Gaiotto--Moore--Neitzke \cite{gaiottoMR3250763}.  There is also a quantum version of this duality \cite{davisonMR4337972, fockMR2567745}.  From the skein theoretic perspective, being guided by the $n=2$ and $n=3$ cases, the most optimistic scenario when $n=n$ is that such bases are realized by planar webs that can be constructed by a local to global process, first constructing the webs locally in triangles, then gluing together these local webs across the triangulation to obtain the global basis web.  This is the procedure followed in the proof of Theorem \ref{thm:intromaintheorem}.

The key to a skein theoretic realization of Fock--Goncharov duality is the $\nsl$ quantum trace map \cite{bonahonMR2851072, ChekhovCzechJPhys00, douglasMR4717274, gabellaMR3613514, kim2022rm, le2023quantum, neitzkeMR4190271}.  This is an algebra homomorphism relating two quantizations of the character variety, namely the skein algebra and the Chekhov--Fock--Goncharov quantum higher Teichm\"{u}ller space.  More precisely, it is a map $\qtr$ from the skein algebra $\sk(\surf)$ to a quantum torus $\mcal{T}_n^q(\surf,\idealtriang)$, or a $q$-deformed Laurent polynomial algebra in $N$ variables, associated to the ideally triangulated punctured surface $\surf$.  Manifesting the tropical geometric nature of Fock--Goncharov duality, the highest term of the quantum trace polynomial of a basis web should have exponents given by the positive tropical integer points of that web.  This is true when $n$ is $2$ or $3$, and it is shown to be true when $n=n$ for the annulus in the course of proving Theorem \ref{thm:intromaintheorem}.  In particular, this is the fundamental property underlying the linear independence of the basis elements.  

As was somewhat unexpected to the authors, the quantum trace map also plays an important role in our proof in establishing the spanning property of the annulus basis. This is because the inadmissible values of $q$ in Theorem \ref{thm:intromaintheorem} include the, a priori, unknown roots of a family of polynomials.  These roots are forced to be roots of unity of small order by the quantum trace analysis; see Section \ref{ssec:main-result-of-the-section}.  

As a consequence of the proof of Theorem \ref{thm:intromaintheorem}, we establish the following parallel applications relating quantum algebras associated to the annulus $\ann$.

\begin{introtheorem}\label{thm:introtheorem2}
The quantum trace map for the skein algebra of the annulus is injective for every admissible value of $q$ as in Theorem {\upshape\ref{thm:intromaintheorem}}.  In particular, the quantum trace map furnishes an algebra embedding of the polynomial algebra $\C[x_1,x_2,\dots,x_{n-1}]$ into a quantum torus algebra $\mcal{T}_n^q(\ann,\idealtriang)$ depending on the choice of an ideal triangulation $\idealtriang$ of the annulus.  
\end{introtheorem}

See Corollary \ref{cor:firstapplication} in Section \ref{ssec:injectivity}.  Note that the injectivity of the quantum trace map is known for general punctured surfaces $\surf$ when $n\leq3$ for all $q$.  

\begin{introtheorem}\label{thm:introtheorem3}
The splitting map \cite{leMR3827810, LeArxiv21} for the skein algebra of the annulus is injective for every admissible value of $q$ as in Theorem {\upshape\ref{thm:intromaintheorem}}.  In particular, the splitting map furnishes a canonical algebra embedding of the polynomial algebra $\C[x_1,x_2,\dots,x_{n-1}]$ into the stated skein algebra $\mcal{S}^q_n(\ann)_\mathrm{st}$.
\end{introtheorem}

See Corollary \ref{cor:secondapplication} in Section \ref{ssec:injectivityofthesplittingmap}.  Note that the injectivity of the splitting map is known for general punctured surfaces $\surf$ when $n\leq3$ for all $q$  \cite{higginsMR4609753, leMR3827810} and when $n=n$ for $q=1$ \cite{korinman2023triangulardecompositioncharactervarieties, wang2023statedslnskeinmodules} and for certain roots of unity $q$ of small order \cite{wang2024statedslnskeinmodulesroots}. 

For some other more recent work on the skein theory of the annulus, see \cite{beaumontgould2023powersumelementsg2, bonahon2023centralelementsmathrmsldskeinalgebra, douglasMR4688540, queffelecMR4715537}.  

\section*{Acknowledgements}

We thank many people for all of the enjoyable and helpful conversations as this project developed, including but not limited to: Francis Bonahon, Noah Charles, Charlie Frohman, Vijay Higgins, Rick Kenyon, Hyun Kyu Kim, Thang L\^{e}, Andy Neitzke, Nick Ovenhouse, Sam Panitch, Haolin Shi, Adam Sikora, Zhe Sun, Sri Tata, and Tao Yu.  In particular, we are grateful to Zhe Sun for teaching us about $\nsl$ tropical web coordinates during the height of the pandemic in January 2021.  Much of this work was completed during memorable visits at Yale University, the Max Planck Institute for Mathematics in the Sciences, and Trinity College Dublin to whom we are very thankful for their warm hospitality.  Lastly, we thank the referee for suggestions for improvements.

\section*{Preliminary notations and conventions}

Let $n$ be an integer $\geq1$.  Denote by $\sC$ the set of non-zero complex numbers, $\sC=\C\setminus\{0\}$.  Let $\qroot\in\sC$ and put $q=(\qroot)^n$.  Starting in Section \ref{sssec:quantum-trace-map}, we will also fix an arbitrary $(2n)$-root $\wroot^{1/2}$ of $\qroot$.  So $(\wroot^{1/2})^{2n}=\qroot$.  

The natural numbers are denoted by $\nnegZ=\Z_{\geq0}$. For a natural number $m \geq 1$, define the quantum natural number $[m]=\sum_{i=1}^m q^{-m-1+2i}=(q^m-q^{-m})/(q-\inv{q})$, and define the quantum factorial $[m]!=\prod_{i=1}^m [i]$.  Put also $[0]=0$ and $[0]!=1$.  Note that $[m]=0$ if and only if $q\in\sC$ is a $(2m)$-root of unity and $q\neq\pm 1$.  

Vector spaces $V$ will be over $\C$.  The notation $v\propto w$ means $v=a w$ for some scalar $a$.    If $A$ is a, possibly non-commutative, algebra with $1$ over $\C$, denote by $\sub{a_1,a_2,\dots,a_m}$ the sub-algebra generated by elements $a_1,a_2,\dots,a_m\in A$, namely the linear span over $\C$ of all monomials of the form $a_{i_1}a_{i_2}\cdots a_{i_p}$ for $i_j\in\{1,2,\dots,m\}$, where this monomial is $1$ if $p=0$.  

Let $\sym_m$ denote the symmetric group on $m$ elements, so $\sym_m$ is the set of self bijections of $\{1,2,\dots,m\}$.   We denote permutations $\sigma\in\sym_m$ by $\bmat\sigma(1)&\sigma(2)&\hdots&\sigma(m)\emat$.  For example, $\bmat3&1&2&4\emat\in \sym_4$ denotes the permutation where $1\mapsto3$, $2\mapsto1$, $3\mapsto2$, $4\mapsto4$.  The order of multiplication in $\sym_m$ coincides with the usual convention for multiplying functions, so if $\sigma_1,\sigma_2\in\sym_m$, then the composition $\sigma_2\circ\sigma_1\in\sym_m$ is defined by $(\sigma_2\circ\sigma_1)(i)=\sigma_2(\sigma_1(i))$ for $i=1,2,\dots,m$.    The \define{length} $\ell(\sigma)$ of a permutation is the minimum number of factors appearing in a decomposition of $\sigma$ as the product of adjacent transpositions.  

Curves in two- and three-dimensional spaces are always oriented.  When permutations $\sigma\in\sym_m$ are represented by overlapping upward oriented curves in a box, they are read from bottom to top.  For example, the picture on the left hand side of Figure \ref{fig:permutation-convention} represents the permutation $\bmat3&1&2&4\emat\in \sym_4$.  

\begin{figure}[htb!]
\centering
\includegraphics[scale=0.8]{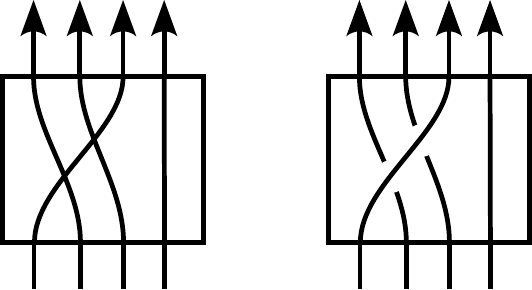}
\caption{The permutation $\bmat3&1&2&4\emat$ and its corresponding positive permutation braid.}
\label{fig:permutation-convention}
\end{figure}

These multi-curves are considered up to homotopy fixing endpoints on the box boundary.  In particular, the composition $\sigma_2\circ\sigma_1$ corresponds, up to homotopy, to placing the picture for $\sigma_2$ above the picture for $\sigma_1$.  The length $\ell(\sigma)$ becomes the number of crossings in the picture for $\sigma$ represented by a multi-curve in minimal position.  

We also think of elements $\beta$ of the braid group $\braid_m$ pictured in such a box, where the braids are likewise oriented from bottom to top.  For example, on the right hand side of Figure \ref{fig:permutation-convention} is the \define{positive lift} braid $\lift{\sigma}_+$ projecting to $\sigma$.  Here, a crossing is positive if a counterclockwise rotation goes from the outgoing over strand to the outgoing under strand, and is negative otherwise.  Define similarly the \define{negative lift} braid $\lift{\sigma}_-$.  Note that $\lift{\sigma}_+$ and $\lift{\sigma}_-$ are inverses in the braid group.  Braids are considered up to isotopy fixing endpoints on the box boundary.  The composition $\beta_2\circ\beta_1\in\braid_m$ of braids is the result of placing the picture for $\beta_2$ above that for $\beta_1$.  If a braid $\beta^\prime$ is the same as $\beta$ up to crossing changes, we call $\beta^\prime$ a  \define{mutant} of $\beta$.

We use the notation $\proj{\beta}\in\sym_m$ to denote the permutation projected to by the braid $\beta$.  The multi-curve $\proj{\beta}$ has no monogons, but, before any homotopy, could have bigons.  Here, by \define{monogon} and \define{bigon}, we mean as in Figure \ref{fig:reidemeister}, where on the left hand side the contractible enclosed region is a monogon, and where on the right hand side the contractible enclosed region is a bigon.  Note that it is possible for this enclosed region to intersect other parts of the multi-curve.  As an example of this subtlety about homotopy, the braid $\beta\in\braid_2$ representing a full twist satisfies that $\proj{\beta}=\bmat2&1\emat^2$ has a bigon, even though as a permutation it is trivial after homotopy.  The permutation picture $\proj{\beta}$ can always be homotoped into minimal position, that is, so that it has no bigons.  We call $\beta$ a \define{permutation braid} if $\beta$ can be isotoped such that $\proj{\beta}$ has no bigons.  Every permutation $\sigma$ has $2^{\ell(\sigma)}$ \define{lifts}, denoted by $\lift{\sigma}$, which are just the permutation braids projecting to $\sigma$.    For example, $\lift{\sigma}_+$ and $\lift{\sigma}_-$ are two such lifts.  Note that the permutation braids lifting the same permutation are just the mutants of $\lift{\sigma}_+$.  

\begin{figure}[htb!]
\centering
\includegraphics[scale=0.8]{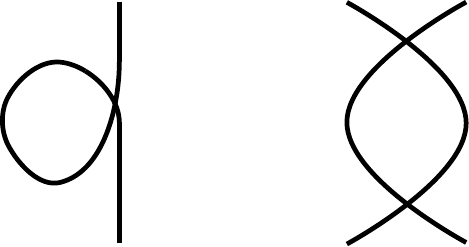}
\caption{Monogon and bigon.}
\label{fig:reidemeister}
\end{figure}

\section{Setting}\label{sec:setting}

\subsection{Webs}\label{ssec:webs}

Consider an edge oriented graph with finitely many vertices, possibly with multiple edges between vertices and possibly with more than one component, that is properly embedded in $\threereal$ such that, in addition:
\begin{enumerate}
\item  the graph is $n$-valent, namely, there are $n$ edges incident to each vertex;
\item  each vertex is either a sink or source, namely, the orientations of the edges at the vertex go all in or all out, respectively;
\item  the graph can contain loop components, namely, components without any vertices and which are homeomorphic to a circle;
\item  the ends of the incident edges at each vertex are distinct and lie in a two-dimensional plane in $\threereal$;
\item  the graph is equipped with a \define{framing}, meaning a continuous assignment of a unit normal vector at each point of the graph, in the following sense:  at a nonvertex point, lying on some unique edge or loop component, this unit vector is in the plane perpendicular to the tangent of the edge at the point; and, at a vertex, this unit vector is perpendicular to the two-dimensional plane spanned by the incident edges;
\item  this framing determines a \define{cyclic order} of the \define{half-edges} incident at each vertex, according to the right hand rule for sources and the left hand rule for sinks; moreover, at each vertex we choose, as part of the data, a \define{linear order} of these half-edges compatible with the cyclic order, which is equivalent to choosing a distinguished half-edge.
\end{enumerate}
Such data is called a \define{based $\nsl$ web} (or just \define{web}) \define{in $\threereal$}, denoted by $W$.   If one forgets the linear order at each vertex, just remembering the cyclic order, we call this an \define{unbased web in $\threereal$}, denoted by $\unbased{W}$.  Webs are considered up to \define{isotopy} through the family of webs.  The \define{empty web} and \define{empty unbased web}, denoted by $\emptyset$, are the unique representatives of their respective isotopy classes.  

Alternatively, one may think of a web as an oriented $n$-valent ribbon graph, obtained by gluing rectangle edges to disk vertices, and equipped with a distinguished half-edge at each vertex.  Note here that there are two orientations being considered, the edge orientations and the orientation as a surface.  In this way of thinking, the framing vector, normal to the tangent plane of the ribbon at each point, is determined by the surface orientation.

In figures, we represent webs by \define{diagrams} in the plane $\tworeal\cong\tworeal\times\{0\}\subset\threereal$, which includes over and under crossing data.  A diagram determines a web, up to isotopy, by equipping the diagram with the \define{blackboard framing}, namely the constant unit framing in the positive vertical direction, that is, the positive third coordinate of $\threereal$ thought of as pointing out of the page toward the eye of the reader.  Note that, by possibly introducing kinks, any web can be isotoped to have the blackboard framing, and so can be represented by a diagram.  We indicate in diagrams the choice of the preferred  half-edge at each vertex, determining the linear order, by placing a \define{cilia} just before the preferred edge in the cyclic order, again, counterclockwise at sources and clockwise at sinks.  See, for example, Figure \ref{fig:sikora-relations}(D) where the cilia is colored red.
%depicted by a short black stump. 

\begin{figure}[htb!]
\centering
\includegraphics[width=.8\textwidth]{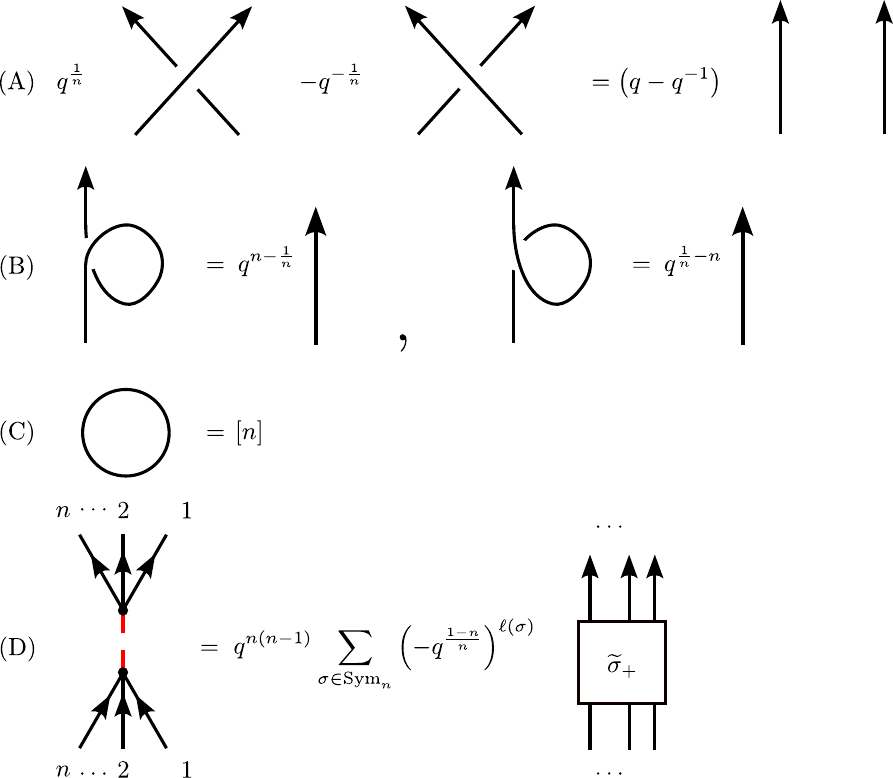}
\caption{Defining relations for the based skein algebra.}
\label{fig:sikora-relations}
\end{figure}

Let $\ann$ denote the \define{open annulus} (or just \define{annulus}) $\ann=\{(x,y,0)\in\threereal;0<x^2+y^2<\infty\}$.  Let $\intt=(0,1)\subset\R$ be the \define{open unit interval} (or just \define{interval}).  The \define{thickened annulus} is $\ann\times\intt\subset\threereal$.  A \define{web in the thickened annulus} is a web in $\threereal$ that is contained in $\ann\times\intt$, considered up to isotopy staying within the thickened annulus.  An \define{unbased web in the thickened annulus} is defined similarly.  From here on, `web' will always mean `web in the thickened annulus', unless stated otherwise.  
       
\begin{remark}\label{rem:websfor3manifolds}
Essentially the same definition allows one to define based and unbased webs  more generally in three-dimensional manifolds $\man$ (in place of $\ann\times\intt$).  See, for example, \cite{LeArxiv21}.
\end{remark}  

By a \define{planar isotopy} of a web, we mean an isotopy that can be achieved at the level of diagrams on the surface by an ambient isotopy of the surface.  In particular, planar isotopies preserve the number of crossings of the diagram.  

\subsection{Skeins}\label{ssec:skeins}
	
Equip the thickened annulus $\ann\times\intt\subset\threereal$ with the orientation inherited as an open sub-set of $\threereal$, the latter having its usual orientation.

The \define{based $\nsl$ skein module} (or just \define{skein module}) $\sk=\sk(\ann\times\intt)$ is the vector space obtained by quotienting the vector space of formal finite linear combinations of isotopy classes of webs $W$ in $\ann\times\intt$ by the relations shown in Figure \ref{fig:sikora-relations}.  Here, the webs appearing in a relation differ in a ball, as shown, and agree outside of this ball.  We emphasize that the relations concern the isotopy classes of the representative webs displayed in the pictures of the relations.  Note that we are abusing notation, since the skein module $\sk$ actually depends on $\qroot$ and not just $q$.  

We refer to the relations (A), (B), (C), (D) in Figure \ref{fig:sikora-relations} as the \define{crossing change}, \define{kink removing}, \define{unknot removing}, and \define{vertex removing} \define{relations}, respectively.  Note (C) is independent of orientation.

The \define{unbased skein module} $\usk$ is the vector space obtained by quotienting the vector space of formal linear combinations of isotopy classes of unbased webs $\unbased{W}$ by the relations shown in Figure \ref{fig:sikora-le-relations}.  
For clarity, we sometimes use the notation $\class{W}$ to indicate the class of a web $W$ in a skein module, not to be confused with the notation $[m]$ for quantum integers.

\begin{figure}[htb!]
\centering
\includegraphics[width=.8\textwidth]{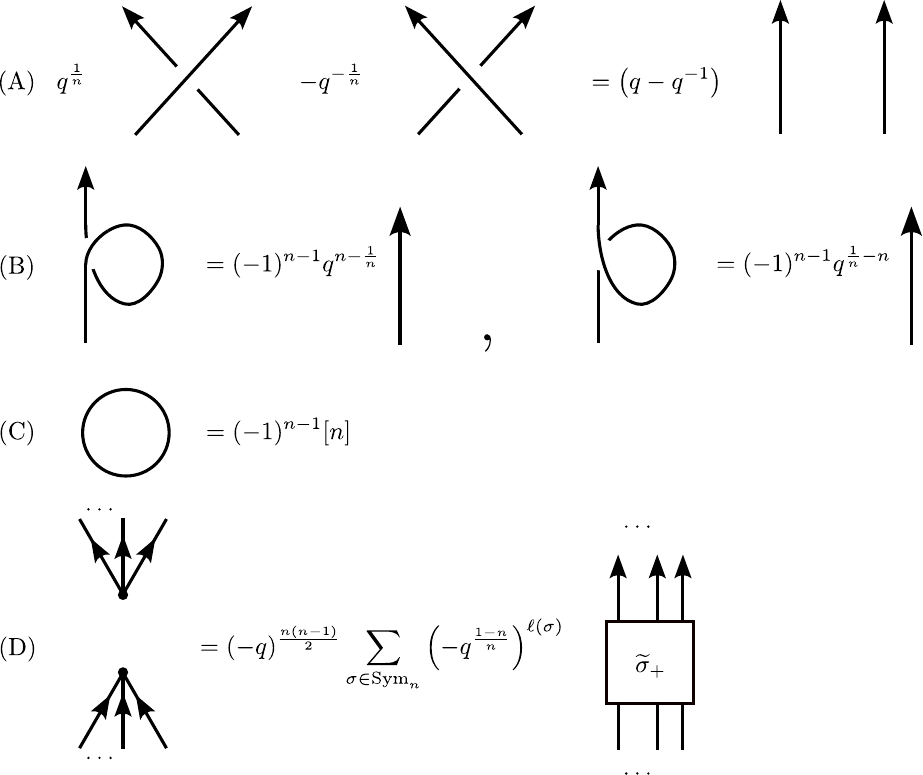}
\caption{Defining relations for the unbased skein algebra.}
\label{fig:sikora-le-relations}
\end{figure}  

The skein modules $\sk$ and $\usk$ have an algebra structure, the multiplication being defined by stacking diagrams, where multiplying left to right means stacking from top to bottom.  In other words, the product $WW^\prime$ is obtained by rescaling $W\subset\ann\times\intt$ into $W\subset\ann\times(1/2,1)$ and rescaling $W^\prime\subset\ann\times\intt$ into $W^\prime\subset\ann\times(0,1/2)$ followed by inserting both of these rescaled webs into $\ann\times\intt$.  Similarly for unbased webs.   We then refer to the \define{skein algebra} $\sk$ and the \define{unbased skein algebra} $\usk$.  Because of the topology of the annulus, we see that $\sk$ and $\usk$ are, in fact, commutative algebras, so  $WW^\prime=W^\prime W$.  The unit in these algebras is the class $\class{\emptyset}$ of the empty web or empty unbased web.  

\begin{proposition}[\cite{LeArxiv21}]\label{prop:changeciliachangesign}
Assume $W$ and $W^\prime$ are webs such that $W$ and $W^\prime$ agree except at a single vertex where the distinguished half-edge of $W^\prime$ is just after, or before, that of $W$ with respect to the cyclic order.  Then $W^\prime=(-1)^{n+1}W$ in $\sk$.  \qed
\end{proposition}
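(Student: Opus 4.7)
The plan is to apply the vertex removing relation (D) of Figure~\ref{fig:sikora-relations} at the single vertex where $W$ and $W'$ differ. This relation expresses the local neighborhood of the vertex as a $\C$-linear combination, indexed by $\sigma \in \sym_n$ with coefficients proportional to $(-1)^{\ell(\sigma)}$, of vertex-free local pieces in which the $n$ incident half-edges are reconnected via a braid lift of $\sigma$. Crucially, the linear order on the half-edges that feeds into this expansion is precisely the one recorded by the cilia, so the only data that distinguishes the expansions for $W$ and $W'$ is which half-edge is listed first.

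I would label the $n$ half-edges at the vertex as $e_1, e_2, \ldots, e_n$ in cyclic order, starting with the distinguished half-edge of $W$. The hypothesis then says that the distinguished half-edge of $W'$ is $e_2$ (in the ``just after'' case), so the linear order for $W'$ is obtained from that for $W$ by the cyclic permutation $c = \bmat 2 & 3 & \cdots & n & 1 \emat \in \sym_n$. Reindexing the expansion of $W'$ via $\sigma \mapsto \sigma \circ c^{-1}$ matches it, local piece by local piece, with the expansion of $W$; because signs of permutations are multiplicative and $c$ has sign $(-1)^{n-1}$, each summand picks up a common factor of $(-1)^{n-1} = (-1)^{n+1}$. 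Summing gives $W' = (-1)^{n+1} W$ in $\sk$. The ``just before'' case is identical with $c$ replaced by $c^{-1}$, which has the same sign.

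I expect the only real obstacle to be bookkeeping: precisely matching the conventions for how the cilia determines the linear order (counterclockwise at sources, clockwise at sinks), how (D) reads off coefficients from that linear order, and how a cyclic shift of the inputs of the antisymmetrizer translates through (D). Once the conventions are pinned down, the conclusion is immediate from the antisymmetry built into the weights $(-1)^{\ell(\sigma)}$, together with the observation that the underlying braid lifts in (D) depend only on the cyclic data at the vertex, not on the cilia. No further skein relations beyond (D) itself are needed.
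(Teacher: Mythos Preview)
The paper does not supply its own proof here; the proposition is quoted from \cite{LeArxiv21} and closed with a \qed. So there is no ``paper's approach'' to compare against beyond the citation. That said, your sketch has two genuine gaps.

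First, the vertex removing relation (D) resolves an adjacent sink--source \emph{pair} into a $\C$-linear combination of braids $\lift{\sigma}_+$; it does not apply to a single $n$-valent vertex in isolation. A lone source (or sink) has all half-edges co-oriented, so there is no way to produce ``vertex-free local pieces in which the $n$ incident half-edges are reconnected'' as you describe. To run anything like your argument one must first manufacture an adjacent vertex of the opposite type, for instance by inserting a canceling sink--source bubble near the given vertex.

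Second, even granting such a setup, the coefficients in (D) are $\bigl(-q^{(1-n)/n}\bigr)^{\ell(\sigma)}$, not merely $(-1)^{\ell(\sigma)}$, and the length function is not a homomorphism: $\ell(\sigma c^{-1})-\ell(\sigma)$ is not constant over $\sigma\in\sym_n$. Hence your reindexing $\sigma\mapsto\sigma c^{-1}$ does \emph{not} multiply every summand by a common scalar. Your accompanying claim that ``the underlying braid lifts in (D) depend only on the cyclic data at the vertex, not on the cilia'' is also not correct as stated: putting the shifted cilia back into the standard position required by the figure for (D) forces an isotopy that drags one strand past the other $n-1$, introducing an extra braid factor. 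The honest argument must track both this extra braid and the $q$-powers in the coefficients; it is the absorbing property of the Hecke-algebra antisymmetrizer $\sum_\sigma(-q^{(1-n)/n})^{\ell(\sigma)}\lift{\sigma}_+$ under elementary crossings, together with the attendant framing/kink correction, that makes the $q$-powers cancel and leaves exactly the sign $(-1)^{n-1}=(-1)^{n+1}$. Your proposal does not engage with this mechanism.
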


We say such a $W$ and $W^\prime$ differ by a \define{cilia change relation}.  Consequently, for all intents and purposes, one can forget about the cilia data when $n$ is odd.

In Figure \ref{fig:more-relations} we provide some more useful relations.  We refer to the relations (A) and (B) in the figure as the \define{planarizing} and \define{peeling relations}, respectively.  Note that the planarizing relation cannot be used when $[n-2]!=0$, equivalently, when $q$ is a $(2m)$-root of unity for some $m=2,3,\dots,n-2$ and $q\neq\pm1$. 

\begin{figure}[htb!]
\centering
\includegraphics[width=\textwidth]{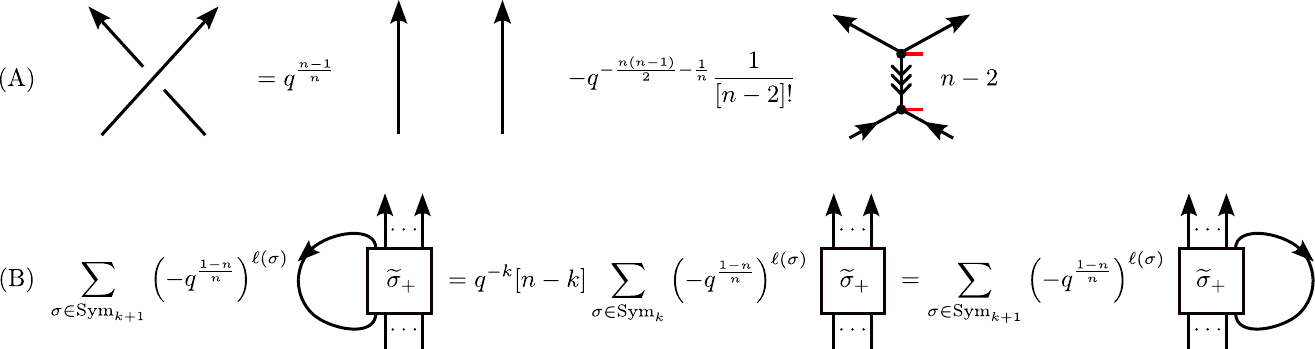}
\caption{More relations for the based skein algebra.}
\label{fig:more-relations}
\end{figure}

The skein algebra $\sk$ is denoted in the notation of \cite{LeArxiv21} by $\mcal{S}_n^b(M;u)$ for $M=\ann\times\intt$ and $u=(-q)^{n(n-1)/2}$, and was first defined, for general $n$, in \cite{SikoraAlgGeomTop05}.  The unbased skein algebra $\usk$ was first defined, for general $n$, in \cite{LeArxiv21} and is denoted by $\mcal{S}_n(M,\emptyset)$ in \cite{LeArxiv21}.  

It is explained, in \cite{LeArxiv21}, how, for $n$ even, the choice of a spin structure $s$ on $\ann\times\intt$ determines an assignment $W\mapsto s(W)\in\{\pm 1\}$ for each web $W$, defined independently of isotopy.  Recall that $\unbased{W}$ is the underlying unbased web.

\begin{proposition}[\cite{LeArxiv21}]\label{prop:spinstructures}
Let $u=(-q)^{n(n-1)/2}$.
\begin{enumerate}
\item  For $n$ odd, the assignment $W\mapsto u^{\#\mrm{sinks}(W)}\unbased{W}$ determines an isomorphism of algebras $\sk\overset{\sim}{\to}\usk$.
\item  For $n$ even, and for a choice of spin structure $s$ on $\ann\times\intt$, the assignment $W\mapsto s(W)u^{\#\mrm{sinks}(W)}\unbased{W}$ determines an isomorphism of algebras $\sk\overset{\sim}{\to}\usk$, with inverse sending $\unbased{W}$ to $s(W)(\inv{u})^{\#\mrm{sinks}(W)}W$ for any  web $W$ such that $\unbased{W}$ underlies $W$.  \qed
\end{enumerate}
\end{proposition}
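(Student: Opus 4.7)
The plan is to verify in three stages that the claimed formula descends to a well-defined linear map, respects the defining relations, and is multiplicative with an explicit inverse. First, I would check well-definedness on isotopy classes of based webs: the quantity $\#\mrm{sinks}(W)$ is manifestly isotopy invariant, and the paper stipulates that $s(W)\in\{\pm 1\}$ is also isotopy invariant, so the assignment $W\mapsto u^{\#\mrm{sinks}(W)}\unbased{W}$ (resp.\ $W\mapsto s(W)u^{\#\mrm{sinks}(W)}\unbased{W}$) extends by linearity to a map from the free vector space on isotopy classes of webs to $\usk$.

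Next, one must show that each of the four defining relations in Figure~\ref{fig:sikora-relations} is sent to a valid identity in $\usk$, i.e.\ the corresponding relation in Figure~\ref{fig:sikora-le-relations}. The crossing-change, kink-removing, and unknot-removing relations are local and do not alter the vertex count, so outside the ball in which the relation takes place the factors $u^{\#\mrm{sinks}}$ and $s$ match on both sides and cancel from the equation; the remaining scalar must then agree with the coefficient in the unbased version, and this is why the constant $u=(-q)^{n(n-1)/2}$ has been chosen. The vertex-removing relation (D) is the substantive case: it destroys one source and one sink (or more, for cascaded vertex removals), and the power of $u$ absorbs precisely this discrepancy, converting the based coefficient into the unbased one. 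For $n$ even one additionally needs the spin-structure factor $s(W)$ to behave compatibly with vertex removal, which is guaranteed by L\^{e}'s construction of $s$ from the framed ribbon-graph structure.

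I would then verify compatibility with the cilia-change relation of Proposition~\ref{prop:changeciliachangesign}: moving a cilia alters $W$ by $(-1)^{n+1}$ in $\sk$ while leaving $\unbased{W}$ untouched. For $n$ odd this sign is $+1$, so no correction is needed. For $n$ even it is $-1$, and one must check that $s(W)$ flips sign under a cilia shift at a single vertex; this is an intrinsic feature of the $\mathbb{Z}/2$-valued invariant associated to a spin structure on the framed graph. Multiplicativity is then immediate because stacking implies $\#\mrm{sinks}(WW^\prime)=\#\mrm{sinks}(W)+\#\mrm{sinks}(W^\prime)$, while $s(WW^\prime)=s(W)s(W^\prime)$ from the local/additive nature of the spin structure. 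The unit is sent to the unit since $\#\mrm{sinks}(\emptyset)=0$ and $s(\emptyset)=1$. Finally, the stated inverse formula, $\unbased{W}\mapsto s(W)\inv{u}^{\#\mrm{sinks}(W)}W$ for any choice of lift, is well-defined precisely because two different lifts differ by a sequence of cilia changes whose combined sign is compensated by $s(W)$; this gives a two-sided inverse and hence an isomorphism.

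The main obstacle is the even case: pinning down the sign $s(W)$ and verifying that it simultaneously satisfies isotopy invariance, multiplicativity under stacking, and the required sign flip under cilia change. All three properties must be read off from the same definition of $s$ in terms of the spin structure on $\ann\times\intt$, and the consistency of these three is the substantive content of Proposition~\ref{prop:spinstructures}. Once $s$ is correctly set up, the rest of the verification reduces to matching scalar coefficients in the two pictures of relations, which is routine but tedious.
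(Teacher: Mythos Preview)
The paper does not supply its own proof of this proposition; it is quoted from L\^{e}--Sikora \cite{LeArxiv21} and closed with a \qed. So there is no ``paper's proof'' to compare against beyond the citation itself.

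Your outline is a correct roadmap for how such a proof goes, and you have correctly located the two substantive points: (i) the vertex-removing relation~(D) is the only place where the sink count changes, and the specific value $u=(-q)^{n(n-1)/2}$ is exactly what reconciles the coefficients in Figure~\ref{fig:sikora-relations}(D) with those in Figure~\ref{fig:sikora-le-relations}(D); and (ii) for $n$ even, the cilia-change sign $(-1)^{n+1}=-1$ must be absorbed by the spin-structure invariant $s(W)$. One small imprecision: your sentence ``this is why the constant $u$ has been chosen'' is attached to relations (A)--(C), but those relations carry identical coefficients in the based and unbased pictures and do not involve $u$ at all; the choice of $u$ is forced solely by~(D).

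The genuine content you defer---the construction of $s(W)$ from a spin structure and the simultaneous verification of isotopy invariance, multiplicativity under stacking, and sign flip under a single cilia shift---is precisely what lives in \cite{LeArxiv21}. Since the present paper also defers that content to the same reference, your sketch is at the appropriate level of detail.
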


In this paper, we prefer to work with the based skein algebra $\sk$.  However, by Proposition \ref{prop:spinstructures}, we immediately obtain:

\begin{corollary}\label{cor:basesarethesame}
For all $\qroot\in\sC$, a collection $\{W_i\}_{i\in I}$ of  webs forms a linear basis for $\sk$ if and only if the  collection of underlying unbased webs $\{\unbased{W_i}\}_{i\in I}$ forms a basis for $\usk$.  \qed
\end{corollary}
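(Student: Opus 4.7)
The plan is to deduce the corollary directly from Proposition \ref{prop:spinstructures}, using the general fact that any linear isomorphism of vector spaces sends bases to bases, combined with the invariance of the basis property under rescaling each element by a non-zero scalar.

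More concretely, first I would fix, once and for all, an algebra isomorphism $\phi\colon\sk\overset{\sim}{\to}\usk$ provided by Proposition \ref{prop:spinstructures}: for $n$ odd, take $\phi(W)=u^{\#\mrm{sinks}(W)}\unbased{W}$; for $n$ even, fix any spin structure $s$ on $\ann\times\intt$ and set $\phi(W)=s(W)u^{\#\mrm{sinks}(W)}\unbased{W}$. Since $\qroot\in\sC$, we have $q=(\qroot)^n\in\sC$ and $u=(-q)^{n(n-1)/2}\in\sC$, and moreover $s(W_i)\in\{\pm1\}\subset\sC$. Hence, in either parity case, $\phi(W_i)=c_i\unbased{W_i}$ for a non-zero scalar $c_i\in\sC$.

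To conclude, I would argue as follows. Because $\phi$ is a linear isomorphism, $\{W_i\}_{i\in I}$ is a basis for $\sk$ if and only if the image family $\{\phi(W_i)\}_{i\in I}=\{c_i\unbased{W_i}\}_{i\in I}$ is a basis for $\usk$. Since each $c_i$ is a non-zero scalar, rescaling by $\inv{c_i}$ gives a bijective correspondence between linear relations among $\{c_i\unbased{W_i}\}_{i\in I}$ and linear relations among $\{\unbased{W_i}\}_{i\in I}$, and likewise for spanning sets; so one is a basis of $\usk$ if and only if the other is. Chaining these two equivalences yields the corollary. There is no real obstacle here: the entire content is the observation that the isomorphism $\phi$ of Proposition \ref{prop:spinstructures} differs from the naive map $W\mapsto\unbased{W}$ only by non-zero rescaling on each basis element, which preserves the basis property.
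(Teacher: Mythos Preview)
Your proposal is correct and takes essentially the same approach as the paper, which simply notes that the corollary follows immediately from Proposition~\ref{prop:spinstructures}. You have spelled out the one-line justification: the isomorphism of Proposition~\ref{prop:spinstructures} sends each $W_i$ to a non-zero scalar multiple of $\unbased{W_i}$, and rescaling by non-zero scalars preserves the basis property.
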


\begin{remark}\label{rem:surfaces}
Essentially the same definition allows one to define based and unbased skein modules more generally for oriented three-dimensional manifolds $\man$ (Remark \ref{rem:websfor3manifolds}), and to define based and unbased skein algebras for thickened  oriented surfaces $\surf\times\intt$.  Note that the skein algebra for a surface $\surf$ is non-commutative in general.  
\end{remark}

\section{Main result}\label{sec:mainresult}

For $i=1,2,\dots,n-1$, define the \define{$i$-th irreducible basis web} $B_i\in\sk$ as in Figure \ref{fig:basiselement}.  Here, we have used a pictorial shorthand, where $i$ (resp. $n-i$) indicates $i$ (resp. $n-i$) parallel edges, oriented as shown.  The cilia (Section \ref{ssec:webs}) are colored red.
%depicted by short black stumps.  
The cross indicates the puncture not at infinity.  

\begin{figure}[htb!]
\centering
\includegraphics[scale=.9]{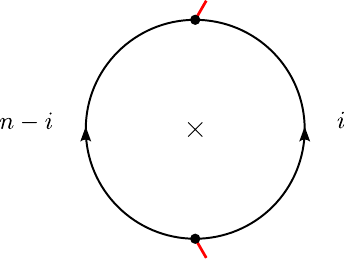}
\caption{The $i$-th irreducible basis web.}
\label{fig:basiselement}
\end{figure}

The following statement is the main result of the paper.  
   
\begin{theorem}\label{thm:maintheorem}
Let $\badset\subset\sC$ be the set of $\qroot$ such that $q=(\qroot)^n$ is a $(2m)$-root of unity for some $m=2,3,\dots,n-1$ and $q\neq\pm1$.   Then for all $\qroot \in \sC\setminus\badset$, the  skein algebra $\sk$ is a commutative polynomial algebra  in $n-1$ variables, with generators the $n-1$ irreducible basis webs $B_1,B_2,\dots,B_{n-1}$.  If $\qroot\in\badset$, then $B_i=0\in\sk$ for some $i=1,2,\dots,n-1$.  
\end{theorem}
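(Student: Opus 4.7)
The plan is to establish the linear independence and the spanning of the monomials in $B_1, B_2, \ldots, B_{n-1}$ separately, both using the $\nsl$ quantum trace map $\qtr \colon \sk(\ann) \to \qtorus{\ann,\idealtriang}$ associated to a fixed ideal triangulation $\idealtriang$ of the twice punctured sphere. Since the codomain is a quantum torus, hence an integral domain, and since the polynomial algebra $\C[x_1, \ldots, x_{n-1}]$ injects into it once we can identify the $\qtr(B_i)$ with compatible leading terms, this setup furnishes both an injectivity and a normal form mechanism.

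For linear independence, I would compute the quantum trace $\qtr(B_i)$ of each irreducible basis web as an explicit Laurent polynomial and isolate its ``highest monomial'' with respect to a monomial order adapted to the positive tropical cone $\fan_\idealtriang$ of the triangulation. The expectation, dictated by the Fock--Goncharov philosophy described in the introduction, is that the exponent vectors of these highest monomials are linearly independent in $\Z^N$ and that multiplying the $\qtr(B_i)$ in the quantum torus preserves leading terms up to a power of $\wroot$. Then the highest term of $\qtr(B_1^{a_1} B_2^{a_2} \cdots B_{n-1}^{a_{n-1}})$ is a non-zero scalar times a monomial depending bijectively on $(a_1, \ldots, a_{n-1}) \in \nnegZ^{n-1}$, which forces the monomials $B_1^{a_1} \cdots B_{n-1}^{a_{n-1}}$ to be linearly independent in $\sk(\ann)$.

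For the spanning property, I would argue by induction on a complexity of webs such as the number of crossings plus the number of $n$-valent vertices. The crossing change relation of Figure \ref{fig:sikora-relations}(A), together with the planarizing relation of Figure \ref{fig:more-relations}(A), reduces any web to a linear combination of planar webs, which is precisely the step demanding $[n-2]! \neq 0$ and which accounts for the small-order roots of unity in $\badset$. Once the web is planar, the simple topology of the annulus — only two ends, a unique non-trivial free homotopy class of loop, and a unique essential arc class up to each puncture — combined with the peeling relation of Figure \ref{fig:more-relations}(B), the vertex removing relation, and the cilia change of Proposition \ref{prop:changeciliachangesign}, should allow one to rewrite any essential planar web as a polynomial in the $B_i$. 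The main obstacle I expect is in controlling the scalar factors produced at each step of this reduction: they will be rational expressions in $q$ whose denominators must be shown not to vanish outside $\badset$. The hint given in the introduction that the inadmissible $q$ include \emph{a priori} unknown polynomial roots forced by quantum trace analysis suggests that one certifies each such scalar by pushing the identity forward through $\qtr$ and checking that any new root of the denominator must, by the highest-term analysis, already lie in $\badset$.

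Finally, for the claim that $B_i = 0$ for some $i$ when $\qroot \in \badset$, I would argue directly at the skein level by exhibiting $B_i$ as a scalar multiple of a simpler web, where the scalar is an explicit product of quantum integers $[m]$ with $2 \le m \le n-1$; this is natural since $B_i$ is built entirely from $i$ and $n-i$ parallel strands merging at sink and source vertices, and such concentrations of parallel strands can be expanded via the defining Sikora relations of Figure \ref{fig:sikora-relations}(D) into sums whose prefactor is a quantum multinomial. Alternatively, one can evaluate $\qtr(B_i)$ at $q \in \badset$ and detect vanishing from the explicit Laurent polynomial computed for the linear independence step. The direct skein route is preferable because it avoids assuming injectivity of $\qtr$ at these excluded values of $q$.
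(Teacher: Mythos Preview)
Your plan for linear independence and for the last sentence matches the paper's argument closely: the paper computes the highest-degree term of $\qtr(\tilde{B}_k)$, shows the resulting tropical exponent vectors are linearly independent (Lemma~\ref{lem:tropicalindependence}), and concludes; and for the last sentence it expands $B_i$ via the vertex-removing and peeling relations to obtain a prefactor $\prod_{k=i}^{n-1}[n-k]$ which vanishes on $\badset$.

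The spanning argument, however, is where your outline diverges from the paper and where the real gap lies. The paper does \emph{not} reduce to planar webs and then invoke the topology of the annulus. Instead it passes through an intermediate countable generating set: the power knots $\gamma_m$, $m\in\Z$ (Proposition~\ref{prop:gammas-generate}, essentially the HOMFLYPT argument). It then expresses each $B_i$ in terms of the $\gamma_j$ with a controlled leading term, $B_i - c_i P_i(\qroot)\gamma_i \in \sub{\gamma_1,\dots,\gamma_{i-1}}$, and inverts these relations to get $\gamma_i\in\sub{B_1,\dots,B_i}$ for $|i|\le n-1$ (Lemma~\ref{lem:small-skeins}), followed by an explicit induction for $|i|\ge n$ (Lemma~\ref{lem:inductionlargei}). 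Your proposed step ``once the web is planar, the simple topology of the annulus\dots should allow one to rewrite any essential planar web as a polynomial in the $B_i$'' is precisely the hard part that the paper circumvents; for $n>3$ there is no readily available classification or confluent reduction of planar $n$-webs in the annulus to fall back on, and you give no mechanism for this step. Note also that the planarizing relation only needs $[n-2]!\ne 0$, so it alone does not explain the exclusion of $(2(n-1))$-th roots of unity in $\badset$; in the paper that exclusion enters through the prefactors $[n-k]$ in Equation~\eqref{eq:rootsinduction1}.

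Finally, your description of how the quantum trace certifies the scalar denominators is slightly off. The paper does not push skein identities forward through $\qtr$ to test denominators. Rather, the unknown leading coefficients $P_i(\qroot)$ are shown to have no roots outside $\badset$ by contradiction: if $P_i(\qroot)=0$ with $\qroot\notin\badset$ and $i$ minimal, then $B_i\in\sub{B_1,\dots,B_{i-1}}$, contradicting the linear independence already established via $\qtr$ (Lemma~\ref{lem:twobadsetsarethesame}). So the quantum trace enters the spanning argument only indirectly, through the independence result.
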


The proof of the last sentence of the theorem is not hard, and is given in Section \ref{proof-of-the-last-statement-of-maintheorem}.  The theorem is proved in Section \ref{sssec:annuluscomputationfinishing}.  

Since the skein algebra $\sk$ is commutative, there is an algebra map $\C[x_1,x_2,\dots,x_{n-1}]\to\sk$ from the commutative polynomial algebra in $n-1$ variables, defined by sending $x_i$ to $B_i$.  To prove the theorem, we need to show that this map is injective and surjective, for $\qroot\in\sC\setminus\badset$.  
In other words, the monomials of the form $B_{1}^{k_1}B_{2}^{k_2}\cdots B_{n-1}^{k_{n-1}}\in\sk$ for $k_i\in\nnegZ$ are linearly independent, and these monomials span, that is, $\sk=\sub{B_1,B_2,\dots,B_{n-1}}$.  

By Corollary \ref{cor:basesarethesame}, we immediately obtain:

\begin{corollary}\label{cor:mainthmforunbased}
 Theorem {\upshape\ref{thm:maintheorem}} is also true for the unbased skein algebra $\usk$.  \qed
 \end{corollary}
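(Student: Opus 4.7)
The plan is to deduce Corollary \ref{cor:mainthmforunbased} as an essentially formal consequence of Theorem \ref{thm:maintheorem} together with Proposition \ref{prop:spinstructures}, which supplies an explicit algebra isomorphism $\Phi\colon\sk\overset{\sim}{\to}\usk$. For $n$ odd, I would take $\Phi$ to be the map $W\mapsto u^{\#\mrm{sinks}(W)}\unbased{W}$, where $u=(-q)^{n(n-1)/2}$. For $n$ even, I would first fix an arbitrary spin structure $s$ on the thickened annulus $\ann\times\intt$ and take $\Phi$ to be the map $W\mapsto s(W)\,u^{\#\mrm{sinks}(W)}\unbased{W}$. In either case, applying $\Phi$ to the based generator $B_i$ yields $\Phi(B_i)=c_i\unbased{B_i}$ for a nonzero scalar $c_i\in\sC$, since $u\in\sC$ and $s(B_i)\in\{\pm1\}$.

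Next, for $\qroot\in\sC\setminus\badset$, Theorem \ref{thm:maintheorem} tells us that $\sk$ is the commutative polynomial algebra $\C[B_1,B_2,\dots,B_{n-1}]$. Because $\Phi$ is an algebra isomorphism, $\usk$ is correspondingly the commutative polynomial algebra on the generators $\Phi(B_1),\dots,\Phi(B_{n-1})$; and since each generator $\Phi(B_i)=c_i\unbased{B_i}$ differs from $\unbased{B_i}$ by a nonzero scalar, rescaling gives $\usk=\C[\unbased{B_1},\unbased{B_2},\dots,\unbased{B_{n-1}}]$. This transports the polynomial algebra structure of the first sentence of Theorem \ref{thm:maintheorem} to the unbased setting. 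For the last sentence, if $\qroot\in\badset$ and $B_i=0$ in $\sk$ for some $i$, then applying $\Phi$ gives $c_i\unbased{B_i}=0$ in $\usk$, and $c_i\neq0$ forces $\unbased{B_i}=0$ in $\usk$.

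Since this deduction is entirely formal, there is no substantive obstacle to carry out. The only minor verification is that $c_i\neq0$, which is immediate from the formula $u=(-q)^{n(n-1)/2}\in\sC$ together with $s(B_i)=\pm1$. An equivalent (and slightly more hands-on) alternative would be to apply Corollary \ref{cor:basesarethesame} directly to the monomial basis $\{B_{1}^{k_1}B_{2}^{k_2}\cdots B_{n-1}^{k_{n-1}}\}_{(k_1,\dots,k_{n-1})\in\nnegZ^{n-1}}$ of $\sk$ to conclude that the collection of underlying unbased webs $\{\unbased{B_{1}^{k_1}\cdots B_{n-1}^{k_{n-1}}}\}$ is a linear basis of $\usk$, and then to observe that stacking the based diagrams commutes with forgetting cilia, so this unbased web agrees with $\unbased{B_1}^{k_1}\cdots\unbased{B_{n-1}}^{k_{n-1}}$ (the product being now taken in $\usk$). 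Either route gives the corollary with no additional work.
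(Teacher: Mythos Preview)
Your proposal is correct and takes essentially the same approach as the paper: the paper's proof is the one-line appeal to Corollary~\ref{cor:basesarethesame}, which is precisely your ``alternative'' route, and your primary route via Proposition~\ref{prop:spinstructures} is just the same argument unpacked one level (since Corollary~\ref{cor:basesarethesame} is itself an immediate consequence of that proposition). Both the polynomial-algebra statement and the vanishing statement for $\qroot\in\badset$ transfer exactly as you describe.
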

 
Combining Theorem \ref{thm:maintheorem} and Corollary \ref{cor:mainthmforunbased}, we gather:
 
\begin{corollary}\label{cor:finitegenerated}
For $\qroot\in\sC\setminus\badset$, the skein algebras $\sk$ and $\usk$ are finitely generated, with an explicit finite presentation.  Indeed, they are isomorphic to the commutative polynomial algebra $\C[x_1,x_2,\dots,x_{n-1}]$. \qed
\end{corollary}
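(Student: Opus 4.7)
The plan is to factor the proof through the natural algebra homomorphism $\phi \colon \C[x_1, x_2, \ldots, x_{n-1}] \to \sk$ sending $x_i$ to $B_i$, and establish that $\phi$ is both injective (the monomials $B_1^{k_1} B_2^{k_2} \cdots B_{n-1}^{k_{n-1}}$ are linearly independent in $\sk$) and surjective ($\sk$ is spanned by such monomials). Both halves are controlled by the $\nsl$ quantum trace map $\qtr$ from $\sk$ into a quantum torus associated to an ideal triangulation $\idealtriang$ of the twice punctured sphere, whose central role is already emphasized in the introduction.

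For linear independence, I would compute $\qtr(B_i)$ as an explicit Laurent polynomial in the generators of the quantum torus and identify its highest-weight monomial with respect to a suitable term order on $\Z^N$. This leading monomial should record the tropical coordinates of $B_i$ regarded as a positive integer tropical point of the Fock--Goncharov cluster variety of the pair $(\ann, \idealtriang)$, in line with the duality picture sketched in the introduction. Multiplicativity of $\qtr$, together with the quasi-commutation relations of the quantum torus, then implies that the leading term of $\qtr(B_1^{k_1} \cdots B_{n-1}^{k_{n-1}})$ picks out a distinct lattice point for each multi-index $(k_1, \ldots, k_{n-1}) \in \nnegZ^{n-1}$. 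A non-trivial linear dependence among the $B_1^{k_1} \cdots B_{n-1}^{k_{n-1}}$ would therefore map under $\qtr$ to a non-trivial dependence among pairwise distinct leading terms, which is impossible; injectivity of $\phi$ follows, with the bonus that $\qtr$ is injective on the image of $\phi$.

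For the spanning property, the strategy is reductive: given an arbitrary web $W$ representing a class in $\sk$, first apply the crossing change relation of Figure \ref{fig:sikora-relations}(A) and the planarizing relation of Figure \ref{fig:more-relations}(A) to express $[W]$ as a linear combination of classes of planar webs. This step requires $[n-2]! \neq 0$, and the failure of this condition is precisely what puts $\qroot$ into the excluded set $\badset$. One then shows, presumably by cutting along the ideal edges of $\idealtriang$ and analyzing the local pictures inside each triangle, that every planar web decomposes into a product of the irreducible webs $B_1, \ldots, B_{n-1}$ via the peeling relation of Figure \ref{fig:more-relations}(B) together with the unknot- and vertex-removing relations of Figure \ref{fig:sikora-relations}(C) and (D). An induction on a suitable complexity measure, such as the number of internal vertices plus a winding number around the puncture, should close the argument.

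The hard part will be the spanning step. Because $\pi_1(\ann) \cong \Z$, planar webs can wind arbitrarily far around the puncture, so one cannot reduce to a finite local calculation inside a single triangle; any successful reduction algorithm must treat the annular topology in its full global generality, most likely triangle by triangle, while carefully tracking the cilia data at each vertex. I also expect the quantum trace analysis used for linear independence to play a second, somewhat surprising role here, as the introduction foreshadows: the a priori unknown roots of the auxiliary polynomials that emerge when one forces a planar web into the $B_i$-subalgebra are shown, via the quantum trace, to be roots of unity of small order, and these roots are absorbed into $\badset$. The final assertion that $\qroot \in \badset$ forces $B_i = 0$ for some $i$ should be a straightforward direct computation, as the author indicates it is treated separately in Section \ref{proof-of-the-last-statement-of-maintheorem}.
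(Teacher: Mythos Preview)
In the paper this corollary is a one-line consequence: it is stated immediately after Theorem~\ref{thm:maintheorem} and Corollary~\ref{cor:mainthmforunbased} and simply combines them. What you have written is really an outline of the proof of Theorem~\ref{thm:maintheorem} itself, which is fine, but you should be aware that the work lies there and not in this corollary.

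Your linear-independence half matches the paper's approach closely: compute $\qtr(\tilde{B}_k)$, extract a unique highest-degree monomial (the tropical coordinate), observe that the resulting degree vectors $t_k^R$ are linearly independent over~$\R$, and conclude by a leading-term argument. This is exactly Section~\ref{sec:linear-independence}.

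Your spanning half, however, takes a genuinely different route from the paper and is where I see a gap. You propose to planarize an arbitrary web via Figure~\ref{fig:more-relations}(A) and then decompose every planar web in the annulus into a product of the $B_i$ by a triangle-by-triangle local analysis. The paper does \emph{not} attempt this: classifying planar $\nsl$ webs in the annulus up to skein relations is hard for $n>3$, and no such classification is available. Instead, the paper goes the opposite direction: it first uses the vertex-removing relation to write any web as a linear combination of \emph{links} (no vertices at all), then shows (Proposition~\ref{prop:gammas-generate}, essentially the classical HOMFLYPT argument) that the power knots $\gamma_m$, $m\in\Z$, generate. The remaining work is to express each $\gamma_m$ in terms of $B_1,\dots,B_{n-1}$; this is done inductively (Lemmas~\ref{lem:inductionlargei} and~\ref{lem:small-skeins}) by resolving the $B_i$ in positive/negative position into braid links and inverting, which is where the auxiliary polynomials $P_i(\qroot)$ appear. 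Your instinct that the quantum trace re-enters to pin down the roots of these polynomials is correct (Lemma~\ref{lem:twobadsetsarethesame}), but note that this step runs through the $\gamma_m$, not through planar webs. If you pursue your planar-web route you would need an independent structure theorem for planar $n$-webs in the annulus, which is not in the literature.
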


\section{Spanning}\label{sec:spanning}

In this section, we prove the last sentence and the spanning property of Theorem \ref{thm:maintheorem}.  In Section \ref{ssec:spanningsetup}, we set some notation.  In Section \ref{proof-of-the-last-statement-of-maintheorem}, we prove the last sentence of Theorem \ref{thm:maintheorem}.  In Section \ref{ssec:intermediate-statement}, we prove Proposition \ref{prop:gammas-generate}, which says that an auxiliary infinite set of links generates the skein algebra $\sk$.     In Section \ref{ssec:main-statement-of-the-section}, we state the almost-main result of the section, Proposition \ref{prop:gammastobasis}, namely the spanning property of the theorem, but for a superset $\altbadset\supset\badset$.  In Section \ref{ssec:induction-step}, we prove the inductive part of Proposition \ref{prop:gammastobasis}.  In Section \ref{ssec:small-skeins}, we finish the proof of Proposition \ref{prop:gammastobasis}.  In Section \ref{ssec:main-result-of-the-section}, we prove $\altbadset=\badset$, which uses the main result of Section \ref{sec:linear-independence}, establishing the main result of this section.  In Section \ref{ssec:a-result-for-general-surfaces}, we digress to discuss a generalization of Proposition \ref{prop:gammas-generate} valid for general surfaces.  

\subsection{Setup}\label{ssec:spanningsetup}

For a permutation $\sigma\in\sym_m$, the \define{permutation multi-curve} associated to $\sigma$, which by abuse of notation we also denote by $\sigma$, is the oriented multi-curve in $\ann$ obtained from the permutation picture for $\sigma$ in a box in $\ann$ by connecting the boundary strands with parallel curves going counterclockwise around the annulus.  See for example the leftmost picture in Figure \ref{fig:k-generator} which is the permutation multi-curve $\sigma=\bmat3&1&2&4\emat$.  Permutation multi-curves are considered up to homotopy in the annulus.  Note that the composition $\sigma_2\circ\sigma_1$ of permutations makes sense as a permutation multi-curve.  

\begin{figure}[htb!]
\centering
\includegraphics[width=0.8\textwidth]{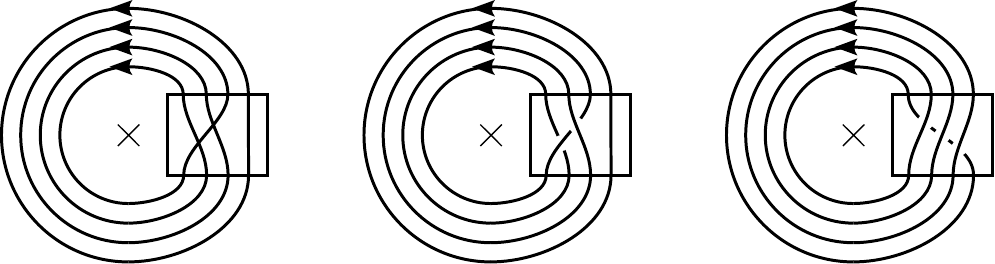}
\caption{Permutation multi-curve, braid link, and permutation knot.  Note that the third permutation is different from the first two.}
\label{fig:k-generator}
\end{figure}

Similarly, for a braid $\beta\in\braid_m$ we can consider the \define{braid link} associated to $\beta$, which by abuse of notation we also denote by $\beta$, as the link in $\ann\times\intt$ obtained by closing up the boxed braid counterclockwise around the thickened annulus.  See for example the middle and rightmost pictures in Figure \ref{fig:k-generator}.  Note that the rightmost braid is the positive braid $\lift{\sigma}_+$ for $\sigma=\bmat2&3&4&1\emat$.  Braid links are considered up to isotopy in the thickened annulus.  Note that the composition $\beta_2\circ\beta_1$ of braids makes sense as a braid link.  We will also think of a braid link $\beta$ as an element of $\sk$.  Note, in particular, that the composition $\beta_2\circ\beta_1$ is different from the product $\beta_2\beta_1$ in the skein algebra $\sk$.  The \define{reverse braid link} $\rev{\beta}$ is obtained by reversing the orientation of the braid link $\beta$.  We refer to a connected braid link $\beta$ as a \define{braid knot}.  A \define{permutation link} is a braid link associated to a permutation braid $\beta=\lift{\sigma}$ for some $\sigma\in\sym_m$.  A \define{permutation knot} is a connected permutation link.  

For any link $L$ in $\ann\times\intt$, we use the notation $\proj{L}$ to indicate the multi-curve projection of the link on the annulus.  This generalizes the notation $\proj{\beta}\in\sym_m$ for a braid link $\beta\in\braid_m$.  

\subsection{Proof of the last sentence of Theorem \ref{thm:maintheorem}}\label{proof-of-the-last-statement-of-maintheorem}

Given the $i$-th irreducible basis web $B_i$, we can isotopy it into either the \define{positive} or \define{negative position}, as illustrated in Figure \ref{fig:fig8}, where the positive (resp. negative) position is displayed on the left (resp. right) hand side of Figure \ref{fig:fig8}. 

\begin{figure}[htb!]
\centering
\includegraphics[width=0.8\textwidth]{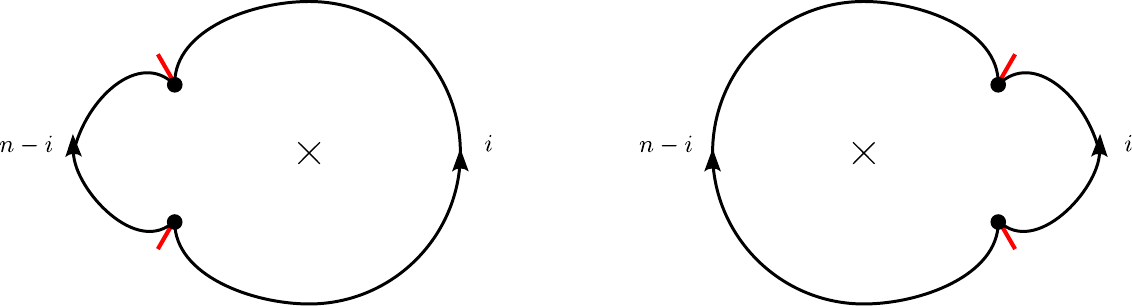}
\caption{Positive and negative position of the $i$-th irreducible basis web.}
\label{fig:fig8}
\end{figure}

Starting in the positive position, we can write 
\begin{equation*}
B_i=q^{n(n-1)}\lp\prod_{k=i}^{n-1}q^{-k}[n-k]\rp\sum_{\sigma\in\sym_i}\lp-q^{(1-n)/n}\rp^{\ell(\sigma)}\beta_{i,\sigma}\in\sk
\end{equation*}
by first applying the vertex removing relation, followed by the peeling relation to kill the $n-i$ strands.  Here the $\beta_{i,\sigma}=\lift{\sigma}_+\in\braid_i$ are the braid links created from the vertex removing relation.  Similarly, starting in the negative position, we can write
\begin{equation*}
B_{n-i}=q^{n(n-1)}\lp\prod_{k=i}^{n-1}q^{-k}[n-k]\rp\sum_{\sigma\in\sym_i}\lp-q^{(1-n)/n}\rp^{\ell(\sigma)}\rev{\beta_{i,\sigma}}\in\sk.
\end{equation*}

\begin{proposition}\label{lem:lastsentenceofmaintheorem}
Let $\badset\subset\sC$ be as in Theorem {\upshape\ref{thm:maintheorem}}.  If $\qroot\in\badset$, then $B_i=0\in\sk$ for some $i=1,2,\dots,n-1$. 
\end{proposition}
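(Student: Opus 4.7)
The approach is to simply extract the scalar prefactor from the formula for $B_i$ in the positive position already stated in the excerpt, recognize that prefactor as (a nonzero scalar times) a quantum factorial, and verify that the factorial vanishes at every $\qroot \in \badset$ for an appropriate choice of $i$, e.g.\ $i=1$.

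First I would inspect the coefficient
\begin{equation*}
q^{n(n-1)}\lp\prod_{k=i}^{n-1}q^{-k}[n-k]\rp
\end{equation*}
appearing in front of the sum $\sum_{\sigma\in\sym_i}(-q^{(1-n)/n})^{\ell(\sigma)}\beta_{i,\sigma}$ in the expansion of $B_i$.  Since $\qroot\in\sC$ we have $q^{n(n-1)}\neq 0$ and $q^{-k}\neq 0$ for each $k$, so the vanishing of this coefficient is controlled entirely by the product of quantum integers.  Reindexing $j=n-k$ as $k$ runs from $i$ to $n-1$, this product becomes
\begin{equation*}
\prod_{k=i}^{n-1}[n-k]=[n-i][n-i-1]\cdots[2][1]=[n-i]!.
\end{equation*}
Thus $B_i$ equals $[n-i]!$ times a nonzero scalar times a $\C$-linear combination of braid links in $\sk$.

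Next I would take $i=1$, so that the relevant factorial is $[n-1]!=\prod_{j=1}^{n-1}[j]$.  By hypothesis, $\qroot\in\badset$ means $q=(\qroot)^n$ is a $(2m)$-root of unity with $q\neq\pm1$ for some $m\in\{2,3,\dots,n-1\}$.  By the identity $[m]=(q^m-q^{-m})/(q-q^{-1})$ recalled in the preliminaries, this is exactly the condition $[m]=0$.  Since $1\leq m\leq n-1$, the factor $[m]$ appears in the product defining $[n-1]!$, and hence $[n-1]!=0$.  Substituting back gives $B_1=0\in\sk$, which produces the desired index $i=1$ satisfying the conclusion.

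There is essentially no obstacle here: once one trusts the closed-form expansion of $B_i$ from the vertex-removing and peeling relations given immediately above the proposition, the argument reduces to the combinatorial identification of the $q$-dependent coefficient with $[n-i]!$ and the observation that $[m]=0$ is precisely the defining condition of $\badset$.  The only mild care needed is to note that one always has the slack $1\le m\le n-1$, so the index $i=1$ works uniformly across $\badset$ without any case distinction.
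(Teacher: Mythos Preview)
Your proof is correct and takes essentially the same approach as the paper: both extract the scalar prefactor from the positive-position expansion of $B_i$, identify the product $\prod_{k=i}^{n-1}[n-k]=[n-i]!$, and use that $[m]=0$ for the relevant $m$ coming from the definition of $\badset$. The paper's version is marginally more informative in that it records the full range of indices for which $B_i$ vanishes (namely $B_i=0$ and $B_{n-i}=0$ for all $1\leq i\leq n-l$ when $[l]=0$), whereas you specialize to $i=1$; but since the proposition only asks for \emph{some} $i$, your choice $i=1$ suffices and is arguably cleaner.
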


\begin{proof}
Note that $\badset=\emptyset$ for $n=2$.  So assume $n\geq3$.  From the two equations above, we see that $B_i=0\in\sk$ and $B_{n-i}=0\in\sk$ when $[n-k]=0$ for some $k=i,i+1,\dots,n-1$.  If $\qroot\in\badset$, so $q$ is a $(2l)$-root of unity for some $l=2,3,\dots,n-1$ and $q\neq\pm1$, then $[l]=0$.  So $B_i=0\in\sk$ and $B_{n-i}=0\in\sk$ for all $1\leq i\leq n-l$.\end{proof}

\subsection{Countable generating set}\label{ssec:intermediate-statement}

For $m\in\Z$, define the \define{$m$-th power knot} $\gamma_m$ in $\ann\times\intt$ as follows.  For $m>0$, we put $\gamma_m$ to be the permutation knot $\gamma_m=\lift{\bmat2&3&\hdots&m&1\emat}_+$.  For $m<0$, $\gamma_m$ is the reverse permutation knot  $\rev{\gamma_{|m|}}$. And $\gamma_0=\emptyset$.  For example, the rightmost picture in Figure \ref{fig:k-generator} is $\gamma_4$.  We will also think of $\gamma_m$ as an element of $\sk$.  Note that in the fundamental group of the annulus, the projection $\proj{\gamma_m}$ represents the $m$-th power of the counterclockwise generator.  Then, we have:

\begin{proposition}\label{prop:gammas-generate}
For any $\qroot\in\sC$, we have $\sk=\sub{\gamma_m;m\in\Z}$.  \end{proposition}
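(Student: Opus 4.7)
The plan is to reduce any web class in $\sk$ to a polynomial in the power knots $\gamma_m$ in two stages: vertex elimination, then crossing reduction.

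\emph{Stage 1 (vertex elimination).} Using the vertex removing relation (D) of Figure \ref{fig:sikora-relations}, I would show that $\sk$ is spanned by link classes, i.e., classes of webs without vertices. Relation (D) replaces a pair of sink/source vertices connected by an edge with a sum of vertex-free configurations indexed by permutations in $\sym_n$, so each application decreases the total vertex count by two. A straightforward induction on the vertex count then writes any $[W] \in \sk$ as a linear combination of link classes.

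\emph{Stage 2 (crossing reduction).} Next, I would show that $[L] \in \sub{\gamma_m; m \in \Z}$ for every link class $[L] \in \sk$. By an Alexander-type argument in $\ann \times \intt$, isotope $L$ to the closure $\hat{\beta}$ of a braid $\beta$ on $k$ strands, where strand orientations reflect the signed winding numbers of the components of $L$. Combining the crossing change relation (A) with (D) applied to resolve the vertices in the smoothing term of (A) yields a HOMFLYPT-type link-level relation, expressing one crossing in terms of its opposite and certain permutation braid closures. Mirroring Turaev's classical argument for the HOMFLYPT skein algebra of the annulus, this relation allows any braid closure $\hat{\beta}$ to be written as a linear combination of closures $\hat{\lift{\sigma}}$ of permutation braids for various $\sigma \in \sym_k$. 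Decomposing each $\sigma$ into cycles of (signed) lengths $m_1, \ldots, m_\ell$, the closure $\hat{\lift{\sigma}}$ is isotopic to a disjoint stack of $\gamma_{m_1}, \ldots, \gamma_{m_\ell}$ at distinct heights, which in the commutative algebra $\sk$ equals the product $\gamma_{m_1} \gamma_{m_2} \cdots \gamma_{m_\ell}$, visibly in $\sub{\gamma_m; m \in \Z}$.

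The chief subtlety is extracting the HOMFLYPT-type link-level relation from (A) and (D) with correct $q$-coefficients and ensuring that the resulting reduction terminates. Alternatively, and perhaps more cleanly, one could invoke Turaev's theorem directly: as the introduction remarks, $\sk$ is naturally a quotient of the $q$-evaluated HOMFLYPT skein algebra, and Turaev showed that the latter, for the annulus, is generated by the $\gamma_m$'s.
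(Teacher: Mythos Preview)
Your outline is close in spirit to the paper's proof, and Stage~1 is exactly right. But Stage~2 has two genuine gaps.

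First, the sentence ``the closure $\hat{\lift{\sigma}}$ is isotopic to a disjoint stack of $\gamma_{m_1},\dots,\gamma_{m_\ell}$ at distinct heights'' is false as stated. The components of the closure of a permutation braid corresponding to different cycles of $\sigma$ are in general linked in $\ann\times\intt$, not stacked; and even a single component, corresponding to an $m$-cycle whose strands are not the first $m$ strands, is a permutation knot on $m$ strands but need not be isotopic to the specific ascending knot $\gamma_m$. Both issues are repaired by further crossing changes, not by isotopy alone. The paper handles this by building the whole argument as an induction on the number of crossings: first separate a link into a product of knots via crossing changes (your stacking), then for each knot use an explicit box-and-U-turn normalisation (Lemma~\ref{lem:annulus-lemma}) to reach a permutation knot or its reverse, and finally use the ascending-knot trick (Lemma~\ref{lem:isotopic-in-the-annulus}) to show that any permutation knot on $m$ strands is a mutant of $\gamma_m$, hence equal to it modulo fewer-crossing terms. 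You need some version of this inductive structure; ``isotopic'' is doing work it cannot do.

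Second, two smaller points. Your appeal to an Alexander-type closure with ``signed'' strand orientations is nonstandard: a braid in the usual sense has all strands oriented the same way, so a link with components winding in both directions around $\ann$ is not literally a braid closure. The paper sidesteps this by reducing to knots first and treating each orientation (i.e.\ $\beta$ versus $\rev{\beta}$) separately. Also, the smoothing term in the crossing change relation~(A) is already a vertex-free oriented resolution, so there are no vertices to remove with~(D); relation~(A) alone is the HOMFLYPT-type relation you want. Your closing remark that one could simply invoke Turaev's theorem for the HOMFLYPT skein of the annulus and pass to the quotient is correct and is precisely what the paper acknowledges before giving its self-contained proof.
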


The primary content of Proposition \ref{prop:gammas-generate}, which is that the curves $\gamma_m$ generate the \define{HOMFLYPT skein algebra}, namely framed oriented links considered up to the crossing change, kink removing, and unknot removing relations, is well known \cite{lickorishMR0918536}.  For completeness, and because we will need the essential statement (Lemma \ref{lem:isotopic-in-the-annulus}) to prove some more refined statements later on (Lemmas \ref{lem:refinement} and \ref{lem:leading-skein-term}), we provide a detailed proof of Proposition \ref{prop:gammas-generate}.  

\begin{lemma}\label{lem:isotopic-in-the-annulus}
For $m>0$, let $\beta=\lift{\sigma}\in\braid_m$ be a permutation knot.  Then $\gamma_m$ is isotopic in the annulus to a mutant of $\beta$.  Similarly, the reverse $\rev{\gamma_m}=\gamma_{-m}$ is isotopic in the annulus to the reverse  of a mutant of $\beta$.
\end{lemma}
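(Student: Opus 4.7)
My plan is to reduce the statement to a planar problem about the shadows of the two links in the annulus $\ann$, solve that planar problem using the classical theory of regular homotopy of immersed curves in a surface, and then lift the planar argument back to an isotopy in $\ann\times\intt$. The mutant of $\beta$ is produced directly as the endpoint of the lifted isotopy starting from $\gamma_m$.

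I would begin by comparing the shadow projections $\proj{\gamma_m}$ and $\proj{\beta}$ as generic immersed oriented curves in $\ann$. Because $\sigma$ is an $m$-cycle, the closure of $\beta$ is a single knot whose shadow winds $m$ times around the core of $\ann$, exactly as $\proj{\gamma_m}$ does, so both shadows represent the same free homotopy class in $\pi_1(\ann)\cong\Z$. Moreover $\proj{\gamma_m}$ has exactly $m-1$ self-intersections while $\proj{\beta}$ has exactly $\ell(\sigma)$ self-intersections (the closing arcs in the braid-closure construction are parallel and contribute none), and the identity $\mrm{sgn}(\sigma)=(-1)^{m-1}=(-1)^{\ell(\sigma)}$ forces $\ell(\sigma)-(m-1)$ to be even. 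I would then invoke the Whitney--Graustein / Hass--Scott theorem for generic immersed curves on surfaces: two such curves representing the same free homotopy class are related by a finite sequence of Reidemeister~II moves (creating or destroying a bigon), Reidemeister~III moves (triangle moves), and ambient isotopy in $\ann$, with no Reidemeister~I moves needed by the parity observation.

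Next, I would lift this planar sequence to an isotopy of links in $\ann\times\intt$, starting from $\gamma_m$. Each planar R2 move which creates a bigon is lifted by assigning opposite over/under data to the two new crossings, so that the resulting pair is null-homotopic in $\ann\times\intt$; each planar R3 move is lifted to the corresponding three-dimensional Reidemeister~III move, which is a valid link isotopy (it realizes a braid-group relation on the three strands involved); and ambient isotopies in $\ann$ lift trivially. The endpoint is a link $L\subset\ann\times\intt$, isotopic to $\gamma_m$, whose shadow coincides with $\proj{\beta}$. Since $\proj{\beta}$ is the permutation multi-curve for $\sigma$ in minimal position (no bigons), $L$ is a permutation braid lifting $\sigma$, hence a mutant of $\beta$, establishing the first assertion. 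The statement for $\rev{\gamma_m}=\gamma_{-m}$ then follows by reversing the orientation of every curve in the above construction: reversing the orientation of $L$ yields a link isotopic to $\rev{\gamma_m}$ which is visibly the reverse of a mutant of $\beta$.

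The main obstacle, and technical heart of the argument, is the combinatorial bookkeeping of the crossing data during the lift: one must be sure that the over/under assignments made during the R2 additions can be propagated consistently through the subsequent R3 rearrangements so that each R3 realizes a genuine three-dimensional link isotopy rather than merely a shadow move. A clean way to finesse this is to first perform all R2 additions to build a link whose shadow is regularly homotopic to $\proj{\beta}$ with the correct crossing count, and then apply R3 moves (realized as braid relations) to reach $\proj{\beta}$ exactly, controlling the crossing data throughout; the parity observation on $\ell(\sigma)-(m-1)$ is what guarantees this bookkeeping is possible. An alternative route is to combine the above planar reduction with Markov's theorem for closed braids in the solid torus, reducing the claim to showing that some lift $\lift{\sigma}\in\braid_m$ of $\sigma$ lies in the $\braid_m$-conjugacy class of $\gamma_m$, which can then be verified using the conjugacy of $\sigma$ and the standard cyclic permutation $c$ in $\sym_m$.
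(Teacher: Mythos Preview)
Your approach is quite different from the paper's, and the central step has a real gap. The paper's proof is a one-line argument: starting from $\beta$, change crossings so that the resulting knot is \emph{ascending} (height increases monotonically along the orientation except for a single drop at the end); this produces a mutant $\beta'$ of $\beta$, and any ascending knot in the thickened annulus winding $m$ times is visibly isotopic to $\gamma_m$, since its strands sit at $m$ distinct height levels and can be independently straightened. No regular-homotopy theory, no lifting of shadow moves, no bookkeeping.

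The gap in your argument is precisely the one you flag but do not close. A planar R3 move does \emph{not} automatically lift to a link isotopy: it lifts only when the three crossings carry compatible over/under data (one strand entirely above or below the other two). You cannot simply declare each shadow R3 to realize ``a braid-group relation on the three strands involved''; whether it does depends on the crossing data already present, which was fixed by the earlier R2 choices and is not under your control at the R3 stage. Your first proposed fix (do all R2's first, then only R3's) does not work either, because two shadows with the same crossing count need not be connected by R3 moves alone, and even if they were, there is no reason the inherited crossing data would make each R3 a valid isotopy. Your second proposed fix via Markov's theorem reduces to showing that some permutation-braid lift of $\sigma$ is conjugate in $\braid_m$ to the positive lift of the standard $m$-cycle; but $\lift{\tau}_+\,\gamma_m\,\lift{\tau}_+^{-1}$ is generally \emph{not} a permutation braid, so this still requires exactly the kind of argument you have not supplied.

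The ascending-knot trick sidesteps all of this: by choosing the mutant of $\beta$ first (to be ascending), the isotopy to $\gamma_m$ becomes trivial, because strands at distinct heights can be slid past one another without any crossing-compatibility constraints. I would recommend replacing the regular-homotopy framework with this direct construction.
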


\begin{proof}
The algorithm of \cite{lickorishMR0918536} to determine which crossings need to be changed, namely which mutant of $\beta$ to take, is to change the crossings in such a way as to obtain the descending knot, namely the knot which  decreases in height until  climbing up at the very end.  Once the knot is  descending, it is clear that it is isotopic in the annulus to $\gamma_m$.  
\end{proof}

\begin{remark}
A slight upgrade of Lemma \ref{lem:isotopic-in-the-annulus}, which we will not need, is that if moreover $\beta$ has $m-1$ positive crossings and no negative crossings, then (resp. the reverse of) $\beta$ is isotopic in the annulus to (resp. the reverse of) $\gamma_m$.    
\end{remark}

For links $L$ and $L^\prime$ and a natural number $N$, we will use the notation $\class{L}\indeq\class{L^\prime}$ to indicate that $\class{L}=a^\prime\class{L^\prime}+\sum_i a_i\class{L_i}\in\sk$ for scalars $a^\prime\neq0,a_i$ such that the links $L_i$ have no more than $N-1$ crossings.    We say $L$ and $L^\prime$ are \define{equivalent up to $N$ crossings}.  For example, if $L$ and $L^\prime$ are links appearing as on the left hand side of the crossing change relation in Figure \ref{fig:sikora-relations}, then $\class{L}\indeq\class{L^\prime}$ for $N$ the number of crossings of $L$.  In particular, if $\beta\in\braid_m$ is a braid link, and $\beta^\prime\in\braid_m$ is a mutant of $\beta$, then $\class{\beta}\indeq\class{\beta^\prime}$ and $\class{\lp\rev{\beta}\rp}\indeq\class{\lp\rev{\beta^\prime}\rp}$.  

\begin{lemma}\label{lem:annulus-lemma}
If $K$ is any non-contractible knot, not necessarily a braid knot, in the thickened annulus with $N$ crossings, then there is a permutation knot $\beta=\lift{\sigma}\in\braid_m$ for some $m>0$ such that $\class{K}\indeq\class{\beta}$ or $\class{K}\indeq\class{\lp\rev{\beta}\rp}$.  
\end{lemma}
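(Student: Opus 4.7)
The plan is to prove the lemma by induction on the number $N$ of crossings of the diagram of $K$. I may assume without loss of generality that the winding number $m$ of $K$ around the annulus is positive; otherwise the argument applies to $\rev{K}$, and reversing orientations throughout the defining equation of $\indeq$ yields the alternate conclusion $\class{K} \indeq \class{\rev{\beta}}$. For the base case $N = 0$, $\proj{K}$ is an embedded non-contractible simple closed curve in $\ann$, hence isotopic to the core circle; with no kinks present, $K$ is isotopic to the permutation knot $\gamma_1 \in \braid_1$, giving $\class{K} = \class{\gamma_1}$.

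For the inductive step I would cascade through three subcases on the combinatorics of the diagram $D$ of $K$. If $D$ has a monogon, the kink removing relation gives $\class{K} = c \class{K_1}$ with $K_1$ a non-contractible knot of $N - 1$ crossings, and the inductive hypothesis supplies the desired $\beta$. If $D$ has no monogons but has an innermost bigon whose two crossings have opposite signs, Reidemeister II yields an isotopy $K \sim K_2$ with $K_2$ having $N - 2$ crossings, and again induction applies. If the two crossings of such a bigon have the same sign, the crossing change relation applied to one of them (using the example following the definition of $\indeq$) gives $\class{K} \indeq \class{K_3}$ with $K_3$ having $N$ crossings and a mixed-sign innermost bigon, reducing to the previous subcase.

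The main subcase is when $D$ has neither monogons nor bigons; here I would argue $K$ is isotopic to the closure of a permutation braid with the same crossing count $N$. Choose a radial arc $\rho \subset \ann$ transverse to $\proj{K}$ and, by planar isotopy of $\rho$ alone (which does not disturb the self-intersections of $\proj{K}$), make $\proj{K} \cap \rho$ have minimal cardinality. An innermost-bigon argument between $\proj{K}$ and $\rho$, using the no-self-bigon hypothesis on $\proj{K}$, should force all intersections $\proj{K} \cap \rho$ to carry the same sign, necessarily positive since the winding is positive, so $|\proj{K} \cap \rho| = m$. Cutting along $\rho$ exhibits $K$ as the closure of a braid $\beta_0 \in \braid_m$ with $N$ crossings. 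The no-self-bigon hypothesis on $\proj{K}$ further forces $\beta_0 = \lift{\sigma}$ to be a permutation braid for some $\sigma \in \sym_m$, and the connectedness of $K$ forces $\sigma$ to be an $m$-cycle. Setting $\beta = \lift{\sigma}_+$ and invoking the $\indeq$-equivalence of mutants of a braid link, one concludes $\class{K} = \class{\lift{\sigma}} \indeq \class{\beta}$.

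The main obstacle is this last subcase: rigorously justifying the topological claim that a self-bigon-free closed curve in the annulus with positive winding is, up to planar isotopy preserving the self-crossing count, the projection of a permutation braid closure. One needs both the innermost-bigon analysis between $\proj{K}$ and $\rho$ to force single-sign intersections, and the observation that any bigon in the resulting braid $\beta_0$ between two strands would correspond to a self-bigon of $\proj{K}$, violating the hypothesis.
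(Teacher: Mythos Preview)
Your approach is genuinely different from the paper's and is reasonable in outline, but there are two gaps, one of which you flag and one you do not.

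The unflagged gap is in your subcase 2. Reidemeister II as an isotopy requires the bigon to be \emph{empty}: no other arcs of the diagram passing through its interior. The paper's definition of a bigon explicitly allows the enclosed region to meet other parts of the multi-curve, and an ``innermost'' bigon in that sense need not be empty. So you cannot directly isotope $K$ to a knot with $N-2$ crossings. The fix is the one the paper uses when it removes bigons: first apply crossing change relations to lift one arc of the bigon above every strand that passes through the region, which keeps you in the $\indeq$ relation, and only then slide the arc across. Your subcase 3 inherits the same issue.

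The flagged gap in your main subcase is real and is exactly where your route diverges from the paper. You need that a monogon- and bigon-free closed curve in the annulus with positive winding is, after planar isotopy, the closure of a permutation braid. Your radial-arc sketch is plausible, but the step ``minimal intersection with $\rho$ plus no self-bigons of $\proj{K}$ forces all intersection signs to agree'' is not immediate: two adjacent opposite-sign intersections on $\rho$ do not automatically produce a bigon between $\proj{K}$ and $\rho$, nor a self-bigon of $\proj{K}$, without further argument about how the emanating arcs must nest and eventually cross. This can be made rigorous, but it is work you have not done.

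The paper sidesteps both issues by reversing your order of operations. It first isotopes $K$ into a box so that outside the box the strands are parallel around the annulus, then removes U-turns on the box edges (by crossing changes to float an innermost U-turn above everything, then a planar slide around the annulus to the opposite edge). Once all U-turns are gone the knot is already a braid link on $m$ strands; only then does the paper remove bigons and monogons (again via crossing changes to float an edge, then an isotopy), landing on a permutation braid. This constructive U-turn removal is what buys the paper its braid form without needing your classification of bigon-free curves. Conversely, your route, if completed, would give a cleaner structural statement about reduced diagrams in the annulus, at the cost of that extra topological lemma.
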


\begin{proof}
By a planar isotopy (Section \ref{ssec:webs}), take the knot into a box such that outside the box the knot consists of parallel strands going around the annulus.  Note that the knot intersects only the top and bottom edges of the box.  There might be U-turns on either edge of the box.  An innermost U-turn is a U-turn that does not completely contain another U-turn.  Given any U-turn, by iteratively applying the crossing change relation, there is a knot $K_1$ such that $\class{K}\indeq\class{K_1}$ and in $K_1$ the U-turn lies above all other strands.  If this U-turn is innermost, then there is a planar isotopy of $K_1$ removing the U-turn from the box and sliding it around the annulus into the other side of the box.  Repeat this process, yielding a knot $K_p$ with no U-turns and $\class{K}\indeq\class{K_p}$.  Note that $K_p$ has $m$ strands, for some $m>0$, with the same orientation, all either going from bottom to top or from top to bottom.  If $K_p$ has a bigon, which we can take to be innermost, by iteratively applying the crossing change relation there is a knot $K_{p+1}$ such that $\class{K_p}\indeq\class{K_{p+1}}$ and in $K_{p+1}$ an edge of the bigon lies above all other strands.  By an isotopy, but not a planar isotopy, of $K_{p+1}$, the bigon can be removed, after which the number of crossings of $K_{p+1}$ has strictly decreased.  In a similar way, a monogon can be removed by applying crossing change relations followed by the kink removing relation.  By repeating this process, we obtain a knot $K_{q}$ with no U-turns, no bigons, no monogons, and $\class{K}\indeq\class{K_{q}}$.  If $K_q$ is oriented upward, then $K_q=\beta=\lift{\sigma}\in\braid_m$ is a permutation knot, else $K_q=\rev{\beta}$ for the permutation knot $\beta$ obtained by reversing the orientation of $K_q$.  
\end{proof}

\begin{proof}[Proof of Proposition {\upshape\ref{prop:gammas-generate}}]
Denote by $\gen\subset\sk$ the sub-algebra $\sub{\gamma_m;m\in\Z}$.  So we want to argue that $\sk=\gen$.  

(1) It suffices to show that $\class{W}\in\gen$ for any web $W$.  Indeed, every element of $\sk$ can be written as a linear combination of classes of webs.  

(2)  It suffices to show that $\class{L}\in\gen$ for any link $L$.  Indeed, by iteratively applying the vertex removing relation, the class $\class{W}$ of a web can be written as a linear combination of classes of links.  

(3)  Assume by induction that $\class{L}\in\gen$ if $L$ has at most $N-1$ crossings.  The base case of a trivial link is immediate, using the unknot removing relation if need be.  Note that if $\class{L}\indeq\class{L^\prime}$, then  $\class{L}\in\gen$ if and only if $\class{L^\prime}\in\gen$.

It suffices to show that $\class{K}\in\gen$ for any knot $K$.  Indeed, by iteratively applying the crossing change relation, we have $\class{L}\indeq\class{K_1\cup K_2\cup\cdots\cup K_p}=\class{K_1}\class{K_2}\cdots\class{K_p}$ for knots $K_i$ lying at distinct heights.  

(4)  It suffices to show that $\class{\beta}\in\gen$ for any permutation knot $\beta=\lift{\sigma}\in\braid_m$ and its reverse $\rev{\beta}$.  Indeed, if the knot $K$ is contractible, then the relations imply $\class{K}\propto\class{\emptyset}\in\gen$.  Else, by Lemma \ref{lem:annulus-lemma}, there is a permutation knot $\beta=\lift{\sigma}\in\braid_m$ such that $\class{K}\indeq\class{\beta}$ or $\class{K}\indeq\class{\lp\rev{\beta}\rp}$.

(5)  We show that such a class $\class{\beta}$ or $\class{\lp\rev{\beta}\rp}$ as in Part (4) is contained in $\gen$, completing the proof.  By Lemma \ref{lem:isotopic-in-the-annulus}, the generator $\gamma_m$ is isotopic in the annulus to a mutant $\beta^{\prime}$ of $\beta$.  So $\class{\gamma_m}=\class{\beta^{\prime}}\in\sk$.  By applying the crossing change relation for each differing crossing, we have $\class{\beta^{\prime}}\indeq\class{\beta}$, in other words $\class{\gamma_m}\indeq\class{\beta}$.  Since $\class{\gamma_m}\in\gen$ by definition, we have $\class{\beta}\in\gen$ as desired.  By the second statement of Lemma \ref{lem:isotopic-in-the-annulus}, the same argument gives $\class{\lp\rev{\beta}\rp}\in\gen$.
\end{proof}

\begin{figure}[htb!]
\centering
\includegraphics[scale=0.8]{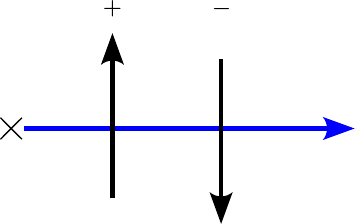}
\caption{Algebraic intersection number.}
\label{fig:fig7}
\end{figure}

Later we will use the following refinements.  

\begin{lemma}\label{lem:refinement}
Let $L$ be a link in $\ann\times\intt$ and let $\gamma$ be the projection of $L$ on $\ann$.  If $\gamma$ has at most $i$ positive intersections and $j$ negative intersections with an arc $\alpha$ oriented out of the puncture and cutting the annulus, colored blue 
%depicted as a dotted ray
in Figure {\upshape\ref{fig:fig7}}, then $\class{L}\in\sub{\gamma_m;-j\leq m\leq i}$.  
\end{lemma}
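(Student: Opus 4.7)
My plan is to mirror the proof of Proposition \ref{prop:gammas-generate}, inducting on the crossing number $N$ of $L$ while carefully tracking how the positive and negative intersection numbers of the projection $\gamma$ with $\alpha$ evolve under each operation. Two guiding observations will underpin the approach. First, the defining skein relations of $\sk$ are local, so if I choose to apply them in small disks disjoint from a fixed generic representative of $\alpha$, both the ``equivalent'' term and the ``lower-crossing'' resolution terms they produce will inherit the same intersection counts with $\alpha$ as the starting link. Second, the winding number of each connected component of the projection is preserved by all operations appearing in the reduction, so the generator $\gamma_w$ eventually produced for a given knot component will automatically have $w$ in the range $[-j_s,i_s]\subseteq[-j,i]$ determined by that component's intersections with $\alpha$.

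For the base case $N=0$, the projection $\gamma$ is a disjoint union of simple closed curves; each non-contractible simple component is isotopic in $\ann\times\intt$ to $\gamma_{m_s}$, where simplicity forces all its $\alpha$-intersections to share the sign of $m_s$ and hence $m_s\in[-j_s,i_s]$, while contractible components reduce to scalar multiples of $\class{\gamma_0}$ via the unknot removing relation. For the inductive step, I will apply inter-component crossing changes (chosen in local disks disjoint from $\alpha$) to decompose $\class{L}$, modulo lower-crossing terms, into a product $\class{K_1}\cdots\class{K_p}$ of knot components at distinct heights; the lower-crossing terms will fall under the inductive hypothesis with intersection bounds preserved. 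I will then invoke the reductions of Lemmas \ref{lem:isotopic-in-the-annulus} and \ref{lem:annulus-lemma} on each non-contractible knot $K_s$ to obtain $\class{K_s}\indeq\class{\gamma_{w_s}}$, where $w_s\in[-j_s,i_s]$ by the second guiding observation.

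The hard part will be verifying that the lower-crossing terms generated throughout the reduction of $K_s$ to $\gamma_{w_s}$ still have intersection counts bounded by $(i_s,j_s)$, so that the inductive hypothesis applies with the required bounds. The subtle step is the planar isotopy that ``slides a U-turn around the annulus'' in the proof of Lemma \ref{lem:annulus-lemma}, which can in principle add pairs of opposite-signed intersections with $\alpha$. My plan to circumvent this is to first replace $K_s$ by a planar-isotopic representative $K_s^\prime$ in \emph{minimum-intersection position} with $\alpha$, namely having exactly $|w_s|$ transverse intersections, all of sign $\mrm{sign}(w_s)$. Starting from $K_s^\prime$, I will position the box of Lemma \ref{lem:annulus-lemma} so that $\alpha$ lies entirely outside it, with the $|w_s|$ exterior strands winding monotonically in a single direction and admitting no U-turns among themselves. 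The only U-turn removals still required will then occur among interior strands inside the box, and the corresponding slides can be realized by local skein operations in disks disjoint from $\alpha$. Consequently, the lower-crossing terms will inherit intersection bounds $(|w_s|,0)$ or $(0,|w_s|)$, well within $(i_s,j_s)\leq(i,j)$, and the inductive hypothesis will complete the argument.
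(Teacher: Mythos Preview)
Your overall strategy is correct and closely parallels the paper's proof: induct on the crossing number, separate into knot components via crossing changes, and then run the boxing procedure of Lemma~\ref{lem:annulus-lemma} on each knot while keeping track of intersections with $\alpha$. The lower-crossing terms are handled by induction exactly as you say, and the winding-number observation $w_s=i_s-j_s\in[-j_s,i_s]$ is the right way to locate the final generator.

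The one substantive divergence is your treatment of the U-turn slides. You worry that sliding a U-turn around the annulus ``can in principle add pairs of opposite-signed intersections with $\alpha$,'' and you therefore insert a preliminary step of planar-isotoping each $K_s$ into minimum-intersection position with $\alpha$. This concern is misplaced. In the setup of Lemma~\ref{lem:annulus-lemma}, after the initial planar isotopy \emph{all} exterior strands go around the annulus; a U-turn at (say) the top edge is an arc \emph{inside} the box whose two endpoints are connected outside to two strands going around to the bottom edge. Those two exterior strands already carry one positive and one negative intersection with $\alpha$ (their orientations through $\alpha$ are opposite, since together with the interior U-turn they form a null-homotopic path between two bottom points). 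Sliding the U-turn around the annulus replaces this entire piece by a short arc near the bottom edge, \emph{removing} both intersections. Thus the paper simply observes that the number of positive and of negative intersections with $\alpha$ never increases during the procedure, with no need for your minimization step.

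Your workaround is nonetheless valid, but two small points deserve care if you keep it. First, the claim that any immersed knot diagram can be planar-isotoped to have exactly $|w_s|$ intersections with $\alpha$ is true (push innermost $\alpha$--$\gamma$ bigons off, using that $\alpha$ is simple), but you should say why. Second, once you are in minimum-intersection position and choose the box as a thin complement of a neighborhood of $\alpha$, there are in fact \emph{no} U-turns at the box edges at all (each of the $|w_s|$ arcs obtained by cutting along $\alpha$ goes from one side to the other), so your sentence about ``U-turn removals among interior strands inside the box'' is superfluous. In the base case, your assertion that simplicity forces all $\alpha$-intersections to share a sign is false without first minimizing, but the conclusion $m_s\in[-j_s,i_s]$ follows anyway from $m_s=i_s-j_s$.
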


\begin{proof}
This essentially follows by the proof of Lemma \ref{lem:annulus-lemma}.  We argue by induction on the number of crossings of $L$.  The base case of a trivial link is immediate, using the unknot removing relation if need be.  Note that if $L$ satisfies the hypothesis of the lemma, namely has at most $i$ positive and $j$ negative intersections with $\alpha$, then both resolutions $L^\prime$ and $L^{\prime\prime}$ of $L$ obtained by performing a crossing relation also satisfy the hypothesis.  By iteratively applying the crossing change relation, write $\class{L}=\alpha\class{K_1}\class{K_2}\cdots \class{K_p}+\sum_i\alpha_i\class{L_i}$ for knots $K_i$, where the links $L_i$ have at most $N-1$ crossings and satisfy the hypothesis, so are covered by induction.  Note that the knots $K_i$ have $\leq i$ positive and $\leq j$ negative intersections with $\alpha$, so also satisfy the hypothesis.  We are reduced to the case of a non-contractible knot $K$ with $N$ crossings and with $i^\prime\leq i$ positive and $j^\prime\leq j$ negative intersections with $\alpha$.  We now run the explicit procedure in the proof of Lemma \ref{lem:annulus-lemma} to obtain a permutation  knot, where we may assume that the cut $\alpha$ is outside the box.  Note that throughout the procedure the number of positive or negative intersections with $\alpha$ never increases, in particular, any needed crossing change relations are covered by induction.   Repeat Part (5) of the proof of Proposition \ref{prop:gammas-generate}, where $-j^\prime\leq m\leq i^\prime$.
\end{proof}

\begin{lemma}\label{lem:leading-skein-term}
Let $\beta\in\braid_m$ be a braid link in $\ann\times\intt$ on $m$ strands.  There are Laurent polynomials $P_\beta(\qroot)$ and $\rev{P}_\beta(\qroot)$ in $\qroot$ such that $\beta-P_\beta(\qroot)\gamma_m\in\sub{\gamma_1,\gamma_2,\dots,\gamma_{m-1}}$ and $\rev{\beta}-\rev{P}_\beta(\qroot)\gamma_{-m}\in\sub{\gamma_{-1},\gamma_{-2},\dots,\gamma_{-(m-1)}}$ for all $\qroot\in\sC$.  Moreover, we can arrange that:  
\begin{enumerate}
\item  the Laurent polynomials $P_\beta(\qroot)=\rev{P}_\beta(\qroot)$ are the same;  
\item  if $\beta$ is connected, then the evaluation $P_\beta(\pm1)=1$ for $\qroot=\pm1$; 
\item  if $\beta$ is disconnected, then the evaluation $P_\beta(\pm1)=0$ for $\qroot=\pm1$.  
\end{enumerate}
\end{lemma}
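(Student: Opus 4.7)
The plan is to induct on the number of crossings $N$ of the braid link $\beta$, mirroring the algorithmic strategy used in the proof of Proposition \ref{prop:gammas-generate}.  For the base case $N = 0$, the braid $\beta$ is the closure of the identity braid on $m$ strands, so $\class{\beta} = \class{\gamma_1}^m \in \sk$; I will set $P_\beta = 1$ when $m = 1$ (so $\beta$ is connected) and $P_\beta = 0$ when $m \geq 2$ (so $\beta$ is disconnected and $\class{\beta} \in \sub{\gamma_1, \ldots, \gamma_{m-1}}$).  For the inductive step $N \geq 1$, following Parts (3)--(5) in the proof of Proposition \ref{prop:gammas-generate}, I will iteratively use the crossing change relation to separate the $p \geq 1$ components of $\beta$---corresponding to cycles of $\proj{\beta} \in \sym_m$ of lengths $m_1, \ldots, m_p$ with $\sum m_i = m$---into a split link whose components $K_1, \ldots, K_p$ lie at distinct heights, yielding $\class{\beta} = c \class{K_1}\class{K_2}\cdots\class{K_p} + \sum_i \alpha_i \class{\beta_i}$, where $c$ is a Laurent polynomial in $\qroot$ built from crossing-change coefficients and each $\beta_i$ is a braid link on $m$ strands with strictly fewer crossings than $\beta$.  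Then I will apply the knot-reduction algorithm of Lemma \ref{lem:annulus-lemma} together with Lemma \ref{lem:isotopic-in-the-annulus} to each braid knot $K_i$ on $m_i$ strands to obtain $\class{K_i} = c_i \class{\gamma_{m_i}} + (\text{lower-crossing braid knot terms})$.

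Combining the above gives $\class{\beta} = (c \prod_i c_i)\class{\gamma_{m_1}}\cdots\class{\gamma_{m_p}} + (\text{elements with strictly fewer crossings})$.  For a connected $\beta$ ($p = 1$, $m_1 = m$) the leading term contributes $cc_1 \class{\gamma_m}$; for a disconnected $\beta$ ($p \geq 2$, each $m_i \leq m-1$) the leading term lies in $\sub{\gamma_1, \ldots, \gamma_{m-1}}$.  I will then apply the induction hypothesis to each fewer-crossing braid link to collect everything into the desired form $\class{\beta} = P_\beta(\qroot)\class{\gamma_m} + (\text{element of } \sub{\gamma_1, \ldots, \gamma_{m-1}})$ for a Laurent polynomial $P_\beta$.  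The analogous construction applied to $\rev{\beta}$ will produce $\rev{P}_\beta$, and because the crossing change relation together with the annulus isotopies used in Lemma \ref{lem:isotopic-in-the-annulus} are all invariant under global reversal of strand orientations, I can mirror the algorithmic choices to arrange $\rev{P}_\beta = P_\beta$ identically, establishing condition (1).

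For conditions (2) and (3), the plan is to specialize at $\qroot = \pm 1$.  There $q = (\qroot)^n \in \{\pm 1\}$, so the factor $q - q^{-1} = 0$ makes the right-hand side of the crossing change relation vanish, forcing $X^+ = X^-$ in $\sk$.  All smoothing contributions in the recursion therefore drop out at $\qroot = \pm 1$, and $\class{\beta}|_{\qroot = \pm 1}$ simplifies to $\class{\gamma_m}|_{\qroot = \pm 1}$ in the connected case and to $\prod_i \class{\gamma_{m_i}}|_{\qroot = \pm 1} \in \sub{\gamma_1, \ldots, \gamma_{m-1}}|_{\qroot = \pm 1}$ in the disconnected case.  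Hence the Laurent polynomial $P_\beta(\qroot)$ built by the recursion specializes at $\qroot = \pm 1$ to $1$ in the connected case (giving (2)) and to $0$ in the disconnected case (giving (3)).  The main obstacle will be rigorously justifying these $\qroot = \pm 1$ evaluations, since for $m \geq n$ the decomposition is not unique at $\qroot = \pm 1$: at $q = 1$ the skein algebra is the character variety of the annulus, where Cayley--Hamilton identities express $\gamma_m$ in terms of $\gamma_1, \ldots, \gamma_{n-1}$ and thus permit multiple valid values of $P_\beta(\pm 1)$ satisfying the decomposition; the subtle point is to verify that the specific $P_\beta$ produced by our natural recursion realizes the claimed evaluations rather than requiring post-hoc adjustment.
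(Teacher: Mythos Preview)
Your approach is correct and essentially identical to the paper's: induction on the number of crossings, with the base case handled as you describe, the inductive step carried out by separating components via crossing changes and then reducing each connected piece to a $\gamma_{m_i}$ via Lemmas~\ref{lem:annulus-lemma} and~\ref{lem:isotopic-in-the-annulus}, and condition~(1) obtained from orientation-reversal invariance of the crossing change relation.

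The one place where you overcomplicate matters is your final paragraph. The ``main obstacle'' you flag is not actually an obstacle. You are not searching among the many Laurent polynomials satisfying the decomposition and hoping the right one emerges; rather, the recursion \emph{defines} a specific $P_\beta$ as a polynomial expression in the coefficients $a_0,a_1$ of the crossing change relation and the inductively constructed $P_{\beta_i}$. Since each crossing change writes $\beta = a_0\beta_0 + a_1\beta_1$ with $a_0 = q^{\pm 2/n}$ and $a_1$ carrying a factor of $q-q^{-1}$, you have $a_0(\pm1)=1$ and $a_1(\pm1)=0$ at $\qroot=\pm1$. Every term in the recursion that arose from a smoothing is thus killed at $\pm1$, and what survives is exactly the product of $a_0$'s along the chain of crossing changes leading to $\gamma_m$ (connected case, giving $1$) or to a product of $\gamma_{m_i}$'s with each $m_i<m$ (disconnected case, contributing $0$ to the $\gamma_m$ coefficient). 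The non-uniqueness of decompositions in $\sk$ at $q=\pm1$ is irrelevant because the recursion builds $P_\beta$ as a Laurent polynomial in $\qroot$ first, and only afterward do you evaluate. The paper's proof makes exactly this observation (``$a_0=1$ and $a_1=0$ when $\qroot=\pm1$'') and proceeds without further ado.
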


\begin{proof}
The proof is by induction on the number $N$ of crossings of $\beta$.  The base case of a trivial braid link is immediate, where $P_\beta(\qroot)$ is either the constant polynomial $1$ or $0$ depending on whether $m=1$ or $m>1$, respectively.  Note that if $\beta$, $\beta_0$, $\beta_1$ are related by a crossing change relation such that $\beta=a_0\beta_0+a_1\beta_1$, where $\beta_0$ has $N$ crossings and $\beta_1$ has $N-1$ crossings, then both $\beta_0$ and $\beta_1$ are braid links on $m$ strands.  Note also that $a_0=1$ and $a_1=0$ when $\qroot=\pm1$. Thus, by the induction hypothesis it suffices to establish the result for $\beta_0$.  Iteratively using the crossing change relation to separate components of $\beta_0$ if there are multiple, it suffices to show it in the case that $\beta_0$ is connected.  As in the proof of Lemma \ref{lem:refinement}, we finish by running the explicit proof of Lemma \ref{lem:annulus-lemma}, followed by repeating Part (5) of the proof of Proposition \ref{prop:gammas-generate}.  

The existence of $\rev{P}_\beta(\qroot)$ is by reversing orientations throughout the argument, and the fact that we can arrange for $P_\beta(\qroot)=\rev{P}_\beta(\qroot)$ is because reversing all orientations preserves the coefficients $a_0$ and $a_1$ in the crossing change relation.  
\end{proof}

\begin{remark}\label{rem:remarkwithaquestion}
In Lemma \ref{lem:leading-skein-term}  we have established the existence of such polynomials.  In Section \ref{ssec:main-result-of-the-section} below we will prove a partial uniqueness statement.
\end{remark}

\subsection{Main result of the section for the bad set:  statement}\label{ssec:main-statement-of-the-section}

For each $i=1,2,\dots,n-1$ and $\sigma\in\sym_i$, choose Laurent polynomials $P_{\beta_{i,\sigma}}(\qroot)=\rev{P}_{\beta_{i,\sigma}}(\qroot)$ in $\qroot$ as in Lemma \ref{lem:leading-skein-term}, where the braid links $\beta_{i,\sigma}\in\braid_i$ are defined as in Section \ref{proof-of-the-last-statement-of-maintheorem}.  Define the Laurent polynomials
\begin{equation*}
P_i(\qroot)=\sum_{\sigma\in\sym_i}\lp-q^{(1-n)/n}\rp^{\ell(\sigma)}P_{\beta_{i,\sigma}}(\qroot).
\end{equation*}
Note we can take $P_1(\qroot)=1$.  

By Section \ref{proof-of-the-last-statement-of-maintheorem} and Lemma \ref{lem:leading-skein-term}, for the irreducible basis webs $B_i$, $1\leq i\leq n-1$,
\begin{equation*}\label{eq:rootsinduction1}\tag{$\ast$}
B_i-q^{n(n-1)}\lp\prod_{k=i}^{n-1}q^{-k}[n-k]\rp P_i(\qroot)\gamma_i\in\sub{\gamma_1,\gamma_2,\dots,\gamma_{i-1}},
\end{equation*}
\begin{equation*}\label{eq:rootsinduction2}\tag{$\ast\ast$}
B_{n-i}-q^{n(n-1)}\lp\prod_{k=i}^{n-1}q^{-k}[n-k]\rp P_i(\qroot)\gamma_{-i}\in\sub{\gamma_{-1},\gamma_{-2},\dots,\gamma_{-(i-1)}}.
\end{equation*}

\begin{lemma}\label{prop:nopm1roots}
For $\qroot=\pm1$, we have the evaluations 
\begin{equation*}
P_i(\pm1)=(-1)^{i-1}(\pm1)^{(1-n)(i-1)}(i-1)!
\end{equation*}
so that, in particular, $\qroot=\pm1$ are not roots of the Laurent polynomials $P_i(\qroot)$.  
\end{lemma}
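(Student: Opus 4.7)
The plan is to evaluate the defining sum for $P_i(\omega)$ at $\omega=\pm 1$ and apply parts (2) and (3) of Lemma \ref{lem:leading-skein-term}.  First I would rewrite $q^{(1-n)/n} = \omega^{1-n}$ using $q = \omega^n$, so that
\begin{equation*}
P_i(\omega) = \sum_{\sigma\in\sym_i}(-1)^{\ell(\sigma)}\,\omega^{(1-n)\ell(\sigma)}\,P_{\beta_{i,\sigma}}(\omega).
\end{equation*}
At $\omega = \pm 1$, parts (2) and (3) of Lemma \ref{lem:leading-skein-term} give $P_{\beta_{i,\sigma}}(\pm 1) = 1$ when the braid link $\beta_{i,\sigma}\in\braid_i$ is connected and $0$ otherwise.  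Since $\beta_{i,\sigma}$ is the annular closure of the positive permutation braid lifting $\sigma\in\sym_i$, its number of components equals the number of cycles of $\sigma$, so it is connected precisely when $\sigma$ is an $i$-cycle.

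Next I would compute the surviving sum, which is now indexed only by $i$-cycles.  Every $i$-cycle has sign $(-1)^{i-1}$, so $(-1)^{\ell(\sigma)} = (-1)^{i-1}$ for each such $\sigma$.  The remaining factor $(\pm 1)^{(1-n)\ell(\sigma)}$ depends only on the parity of $\ell(\sigma)$, and for every $i$-cycle this parity equals $i-1$; hence $(\pm 1)^{(1-n)\ell(\sigma)} = (\pm 1)^{(1-n)(i-1)}$ uniformly in $\sigma$.  Since the number of $i$-cycles in $\sym_i$ is $(i-1)!$, the sum collapses to
\begin{equation*}
P_i(\pm 1) = (-1)^{i-1}(\pm 1)^{(1-n)(i-1)}(i-1)!,
\end{equation*}
which is the claimed formula.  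Non-vanishing at $\omega=\pm 1$ is then immediate from $(i-1)!\neq 0$.

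The only subtle point is that the exponent $(1-n)\ell(\sigma)$ depends on the actual length of $\sigma$, not merely on its cycle type.  The resolution is that at $\omega = \pm 1$ only the parity of this exponent matters, and the parity of $\ell(\sigma)$ is fixed by the cycle type via the sign of $\sigma$.  Beyond this minor bookkeeping, no real obstacle arises; the argument is a direct application of Lemma \ref{lem:leading-skein-term}(2)(3) once one identifies the surviving $\sigma$ as the $i$-cycles.
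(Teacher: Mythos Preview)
Your proof is correct and follows essentially the same route as the paper's own argument: both apply Lemma~\ref{lem:leading-skein-term}(2)(3) to reduce the sum to $i$-cycles, then use that there are $(i-1)!$ of them with common signature $(-1)^{i-1}$ and that $(\pm1)^{(1-n)\ell(\sigma)}$ depends only on the parity of $\ell(\sigma)$. Your writeup is just a bit more explicit about why only $i$-cycles survive and why the $(\pm1)^{(1-n)\ell(\sigma)}$ factor is constant across them.
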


\begin{proof}
By Lemma \ref{lem:leading-skein-term} the lower order terms in the sum vanish, hence
\begin{equation*}
P_i(\pm1)=\sum_{\{\sigma\in\sym_i;\sigma\text{ is an }i\text{-cycle}\}}\lp-(\pm1)^{(1-n)}\rp^{\ell(\sigma)}.
\end{equation*}
The result follows since there are $(i-1)!$ many $i$-cycles, each with signature $(-1)^{i-1}$, and $(-1)^{\ell(\sigma)}$ equals the signature of $\sigma$.
\end{proof}

Define the \define{bad set} $\altbadset\subset\sC$ as follows.  Let $\altbadset$ be the set $\badset$ as in Theorem \ref{thm:maintheorem} union the set of $\qroot\in\sC$ that are roots of at least one of the Laurent polynomials $P_{i}(\qroot)$ for $i=1,2,\dots,n-1$, rather, $2\leq i\leq n-1$ since $P_1(\qroot)=1$.  Note that, in particular, $\pm1\notin\altbadset$ by Lemma \ref{prop:nopm1roots}. The result we want to prove is:

\begin{proposition}\label{prop:gammastobasis}
For all $\qroot\in\sC\setminus\altbadset$, we have $\gamma_m\in\sub{B_1,B_2,\dots,B_{n-1}}$ for all $m\in\Z$.
\end{proposition}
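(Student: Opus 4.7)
The plan is to prove $\gamma_m \in \sub{B_1, B_2, \dots, B_{n-1}}$ for every $m \in \Z$ by induction on $|m|$. The crucial non-vanishing input is that for $\qroot \in \sC \setminus \altbadset$, the scalar
\[
c_i := q^{n(n-1)}\lp\prod_{k=i}^{n-1} q^{-k}[n-k]\rp P_i(\qroot)
\]
multiplying $\gamma_i$ in \eqref{eq:rootsinduction1} (resp.\ $\gamma_{-i}$ in \eqref{eq:rootsinduction2}) is nonzero for every $i = 1, \dots, n-1$: indeed $\qroot \notin \badset$ forces $[l] \neq 0$ for $l = 1, \dots, n-1$, and $P_i(\qroot) \neq 0$ by definition of $\altbadset$. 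A sub-induction on $i$ now handles the base cases $1 \leq |m| \leq n-1$: the base $i=1$ is immediate since the right-hand side of \eqref{eq:rootsinduction1} lies in $\C \cdot 1$, and the inductive step solves \eqref{eq:rootsinduction1} for $\gamma_i$ using the previously established $\gamma_1, \dots, \gamma_{i-1} \in \sub{B_1, \dots, B_{n-1}}$. The analogous argument via \eqref{eq:rootsinduction2} yields $\gamma_{-i} \in \sub{B_1, \dots, B_{n-1}}$ for $1 \leq i \leq n-1$.

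For $|m| \geq n$, the core of the argument is a skein-theoretic analog of the $\nsl$ Cayley--Hamilton recursion. Classically ($\qroot = 1$), the characteristic polynomial identity
\[
A^n = \sum_{i=1}^{n-1}(-1)^{i+1}\chi_i\,A^{n-i} + (-1)^{n+1}I
\]
(with $\chi_i$ the fundamental characters of $\nsl$) yields, after multiplying by $A^{m-n}$ and tracing,
\[
\gamma_m = \sum_{i=1}^{n-1}(-1)^{i+1}\chi_i\,\gamma_{m-i} + (-1)^{n+1}\gamma_{m-n}.
\]
Since each $\chi_i$ is a polynomial in $\gamma_1, \dots, \gamma_{n-1}$ by Newton's identities, and hence in $B_1, \dots, B_{n-1}$ by the base cases, this expresses $\gamma_m$ in terms of $\gamma_j$ with $|j| < |m|$ and products of the $B_i$. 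The quantum counterpart should arise from a local web identity exploiting the triviality of $\wedge^n$ as an $\nsl$-representation: one introduces an auxiliary web combining the braid link $\gamma_m$ with appropriately placed $n$-valent vertices, applies the vertex-removing relation (Figure~\ref{fig:sikora-relations}(D)), and identifies each resulting braid link on at most $m$ strands via Lemma~\ref{lem:leading-skein-term}. The outcome is a quantum Cayley--Hamilton-type relation expressing $\gamma_m$ modulo $\sub{\gamma_j;|j|<|m|}$ as a combination of $B_i$-monomials, closing the induction. A symmetric argument handles $m \leq -n$, completing the two-sided induction.

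The main obstacle is the construction and verification of this quantum Cayley--Hamilton identity. One must exhibit the auxiliary web explicitly, track the $q$-dependent coefficients produced by vertex-removing and Lemma~\ref{lem:leading-skein-term}, and confirm that the leading coefficient of $\gamma_m$ in the expansion is a Laurent polynomial in $\qroot$ vanishing only on $\altbadset$. At $\qroot = 1$ the classical Cayley--Hamilton coefficient is $1$, providing a nontrivial starting point, so the quantum coefficient should be a Laurent polynomial whose exceptional zero set is controlled by the definition of $\altbadset$ (in line with the observation after Remark~\ref{rem:remarkwithaquestion} that the proof partially dictates which roots of unity must be excluded). Disentangling the various contributions to $\gamma_m, \gamma_{m-1}, \dots, \gamma_{m-n}$ in the expansion is the bookkeeping-heavy part, but once this is handled the induction closes and Proposition~\ref{prop:gammastobasis} follows.
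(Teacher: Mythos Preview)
Your treatment of the base cases $|m|\leq n-1$ is correct and matches the paper's Lemma~\ref{lem:small-skeins} exactly: invert the non-zero scalar $c_i$ in \eqref{eq:rootsinduction1} and \eqref{eq:rootsinduction2} and induct on $i$.

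The gap is in the induction step $|m|\geq n$. What you have written is not a proof but a proposal for a proof: you invoke a ``quantum Cayley--Hamilton recursion'' without constructing it, and you explicitly flag its construction and coefficient analysis as ``the main obstacle''. In particular, you assert that the leading coefficient of $\gamma_m$ in your hypothetical expansion is a Laurent polynomial vanishing only on $\altbadset$, but nothing in your argument establishes this; it is exactly the kind of statement that requires the explicit web to be written down and the vertex-removing and peeling relations to be applied term by term. Without that, the induction does not close.

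The paper takes a different and more concrete route for $|m|\geq n$ (Lemma~\ref{lem:inductionlargei}). Rather than a full Cayley--Hamilton-type identity, it applies the planarizing relation (Figure~\ref{fig:more-relations}(A)) to a single crossing of $\gamma_i$ to write $\gamma_i=c\,\gamma_1\gamma_{i-1}+d\,y$, then planarizes one more crossing of $y$ to peel off a factor of $B_2\gamma_{i-2}$, and finally handles the remaining piece $y'$ via a local square relation (Lemma~\ref{lem:square-lemma}) that dissolves the two $n$-valent vertices into braid links with at most $i-2$ positive and $n-2$ negative intersections with a cut arc. Lemma~\ref{lem:refinement} then places every leftover term in $\sub{\gamma_m;-(n-2)\leq m\leq i-2}$, which is in $\basgen$ by the outer induction hypothesis. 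The only obstruction is the denominator $[n-2]!$ in the planarizing relation, which is already excluded by $\qroot\notin\badset\subset\altbadset$; no further coefficient analysis is needed. Your Cayley--Hamilton heuristic is morally related (both ultimately reflect the triviality of $\wedge^n$), but the paper's two-step planarization avoids having to control an $n$-term recursion and its $q$-dependent leading coefficient all at once.
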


We work toward the proof of Proposition \ref{prop:gammastobasis}, which appears in Section \ref{ssec:small-skeins}.  We will use the notation $\basgen=\sub{B_1,B_2,\dots,B_{n-1}}$.  

\subsection{Main result of the section for the bad set: induction step}\label{ssec:induction-step}

\begin{lemma}\label{lem:square-lemma}
Let $a=q^{n(n-1)}$, $b=-q^{(1-n)/n}$, and $a_k=q^{-k}[n-k]$.  For all $\qroot\in\sC$, the relation appearing in Figure {\upshape\ref{fig:square-lemma}} holds.  The same relation holds with all orientations reversed.\end{lemma}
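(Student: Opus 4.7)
The form of the constants $a = q^{n(n-1)}$, $b = -q^{(1-n)/n}$, and $a_k = q^{-k}[n-k]$ is highly suggestive: these are precisely the constants that appear when one applies the vertex removing relation (Figure \ref{fig:sikora-relations}(D)) followed by the peeling relation (Figure \ref{fig:more-relations}(B)), exactly as in the derivation of the expressions for $B_i$ given in Section \ref{proof-of-the-last-statement-of-maintheorem}. So my plan is to prove the identity purely from these two defining skein relations (together with planar isotopy and, if necessary, the cilia change relation from Proposition \ref{prop:changeciliachangesign}), rather than invoking anything deeper.

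The strategy will be to start from the web side of the figure, which I expect to be a configuration containing one or more $n$-valent vertices connected by bundles of parallel strands --- a ``square'' built out of vertex-bundle-vertex-bundle data. I would proceed as follows: first, apply the vertex removing relation at each web vertex, which replaces each vertex by a sum $\sum_{\sigma\in\sym_m} b^{\ell(\sigma)} \lift{\sigma}_+$ of positive lifts of permutations, together with an overall prefactor of $a = q^{n(n-1)}$ per vertex; second, use planar isotopy to bring the resulting bundles of parallel strands into a position where the peeling relation applies, and then iteratively peel off single strands, each peeling step contributing a factor of some $a_k = q^{-k}[n-k]$; third, identify what remains, after all peelings, with the other side of the claimed identity via planar isotopy. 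Throughout, the permutation-sum structure on each side of Figure \ref{fig:square-lemma} should match up canonically, so that the equality reduces to comparing coefficients.

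The expected obstacle is bookkeeping rather than any genuine mathematical subtlety: one must carefully track the product of $a_k$ factors produced by the peeling procedure (since each peel changes the index $k$), make sure the power of $b$ carried by each term really is $b^{\ell(\sigma)}$ and not some shifted power, and be consistent about cilia placement when the two vertices in the square have different orientations. For $n$ even, any mismatch in cilia conventions introduces a sign via Proposition \ref{prop:changeciliachangesign}, so particular care is needed when a source and a sink meet in the configuration. This accounting is deterministic but tedious; since it happens locally inside a ball, it suffices to check the relation for the displayed local picture and the result then globalizes.

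For the reversed-orientation version, I would not redo the calculation from scratch. Instead I observe that the vertex removing and peeling relations are symmetric under global reversal of edge orientations (swapping all sources and sinks), and the constants $a$, $b$, $a_k$ do not involve orientation data, only $q$ and $\qroot$. Thus reversing every orientation in the displayed identity transforms the argument above into a proof of the reversed statement, modulo the same cilia-sign bookkeeping handled uniformly by Proposition \ref{prop:changeciliachangesign}. This justifies the second sentence of the lemma without additional work.
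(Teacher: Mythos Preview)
Your proposal is correct and takes essentially the same approach as the paper: the paper's proof consists of a single sentence stating that this is a straightforward skein calculation using the cilia change, vertex removing, and peeling relations (with the cilia change relation noted as optional). Your outline is in fact more detailed than the paper's own proof, and your treatment of the reversed-orientation case by symmetry is appropriate.
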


\begin{proof}
This is a straightforward skein calculation, using the cilia change, vertex removing, and peeling relations.  The cilia change relation is optional.  
\end{proof}
\begin{figure}[htb!]
\centering
\includegraphics[width=\textwidth]{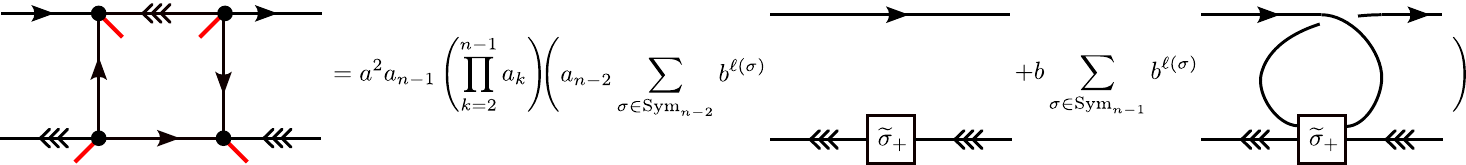}
\caption{Local square relation. A strand with a triple arrow indicates $n-2$ parallel strands oriented according to the direction of the arrow.}
\label{fig:square-lemma}
\end{figure}

\begin{lemma}\label{lem:inductionlargei}
Let $\qroot\in\sC$ such that $q$ is not a $(2k)$-root of unity for any $k=2,3,\dots,n-2$ but allowing $q=\pm1$.  Assuming $\gamma_i\in\basgen$ for all $|i|\leq n-1$, then $\gamma_i\in\basgen$ for all $|i|\geq n$.  
\end{lemma}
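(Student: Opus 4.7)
\medskip

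\noindent\emph{Proof plan.}  The strategy is to run an induction on $|m|$, at each step invoking the local square relation of Lemma~\ref{lem:square-lemma} to increase the winding number by one.  By the second assertion of Lemma~\ref{lem:square-lemma} (the relation holds with all orientations reversed) and the definition $\gamma_{-m}=\rev{\gamma_m}$, it suffices to establish $\gamma_m\in\basgen$ for $m\geq n$; the case $m\leq -n$ then follows by running the same argument after reversing all orientations and replacing each $B_i$ by $B_{n-i}$.

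For the inductive step with $m\geq n$, I would isotope the power knot $\gamma_m$ in the thickened annulus so that $n$ of its parallel strands come together near the puncture and pass through a small rectangular region matching the left-hand side of the relation in Figure~\ref{fig:square-lemma}.  Replacing this local sub-configuration by the right-hand side of the square relation produces an equation in $\sk$ of the schematic form
\[
c\,\gamma_m \;=\; \sum_i d_i \, B_i\cdot \class{L_i} \;+\; \xi,
\]
where each $L_i$ is a link whose multi-curve projection has strictly smaller winding absolute value than $m$, $\xi\in\sub{\gamma_j\,:\,|j|\leq m-1}$ is a remainder created by secondary crossing changes needed to put the right-hand side into standard form, and $c,d_i$ are scalars built from $a=q^{n(n-1)}$, $b=-q^{(1-n)/n}$, and the quantum integers $a_k=q^{-k}[n-k]$ for $k=1,2,\dots,n-2$.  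The two auxiliary inputs needed here are Lemmas \ref{lem:annulus-lemma} and \ref{lem:refinement}:  the former lets me replace each $L_i$ by a permutation link up to lower crossing terms, and the latter controls the winding bound of those lower terms so they can be absorbed into $\xi$.  The hypothesis that $q$ is not a $(2k)$-root of unity for any $k=2,3,\dots,n-2$ guarantees $[n-k]\neq 0$ for each $k$ in that range, which is exactly the condition making the leading coefficient $c$ nonzero; this is precisely where the excluded roots of unity in the statement enter.

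With this identity in hand, the induction concludes quickly.  The base case $|m|\leq n-1$ is the hypothesis of the lemma.  For $m\geq n$, each $\class{L_i}$ lies in $\sub{\gamma_j\,:\,|j|\leq m-1}\subseteq\basgen$ by the inductive hypothesis (composed with Lemma~\ref{lem:refinement}), so each $B_i\cdot\class{L_i}\in\basgen$; similarly $\xi\in\basgen$.  Dividing by $c$ gives $\gamma_m\in\basgen$, completing the induction.  The main obstacle I anticipate is the combinatorial bookkeeping that extracts the displayed identity from Lemma~\ref{lem:square-lemma}:  one must choose the local insertion of the square relation correctly, then carefully account for which of the resulting webs assemble into products $B_i\cdot\class{L_i}$ and which are absorbed into $\xi$, ensuring along the way that the coefficient of $\gamma_m$ really is a nonzero multiple of a product of the $a_k$'s under the stated hypothesis on $q$.
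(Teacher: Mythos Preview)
Your plan has a genuine gap at its first move.  You propose to match the left-hand side of Lemma~\ref{lem:square-lemma} against a local piece of the power knot $\gamma_m$ consisting of $n$ parallel strands.  But the left-hand side of the square relation is not a configuration of strands: it is a \emph{web} containing source and sink vertices (the ``square''), as one sees already from the fact that its proof uses only the cilia change, vertex removing, and peeling relations.  Since $\gamma_m$ is a framed knot with no $n$-valent vertices at all, there is no sub-configuration of $\gamma_m$ to which Lemma~\ref{lem:square-lemma} can be applied directly.  Consequently your schematic identity $c\,\gamma_m=\sum_i d_i\,B_i\cdot\class{L_i}+\xi$ does not arise the way you describe, and your explanation of where the root-of-unity hypothesis enters (a nonzero leading coefficient built from the $a_k$) is not correct.

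What the paper actually does is insert an extra step that you are missing: it first applies the \emph{planarizing relation} (Figure~\ref{fig:more-relations}(A)) to a crossing of $\gamma_i$, writing $\gamma_i=cx+dy$ with $x=\gamma_1\gamma_{i-1}$ and $d$ proportional to $1/[n-2]!$; then applies it once more to get $y=cx'+dy'$ with $x'=B_2\gamma_{i-2}$.  Only now does $y'$ carry the square of vertices needed for Lemma~\ref{lem:square-lemma}, which then resolves $y'$ into terms of the form $\rev{(\lift{\sigma}_+)}\gamma_{i-2}$ and lower-winding links controlled by Lemma~\ref{lem:refinement}.  The root-of-unity restriction in the hypothesis is exactly the condition $[n-2]!\neq 0$ required to invoke the planarizing relation; once that relation is available, all leading coefficients that appear are automatically nonzero.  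Your inductive framework and your use of Lemma~\ref{lem:refinement} to bound windings are the right ideas, but the engine that drives the reduction is the planarizing relation, not a direct application of the square lemma to $\gamma_m$.
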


\begin{proof}
As a preliminary calculation, let $i\geq n$.  Using the planarizing relation, which is where the restriction on $q$ comes in, write $\gamma_i=cx+dy\in\sk$, where $c=q^{(n-1)/n}$ and $d=-q^{-n(n-1)/2-1/n}/[n-2]!$, and where $x$ and $y$ are the webs appearing in Figure \ref{fig:induction-1of3}. 
\begin{figure}[htb!]
\centering
\includegraphics[width=0.8\textwidth]{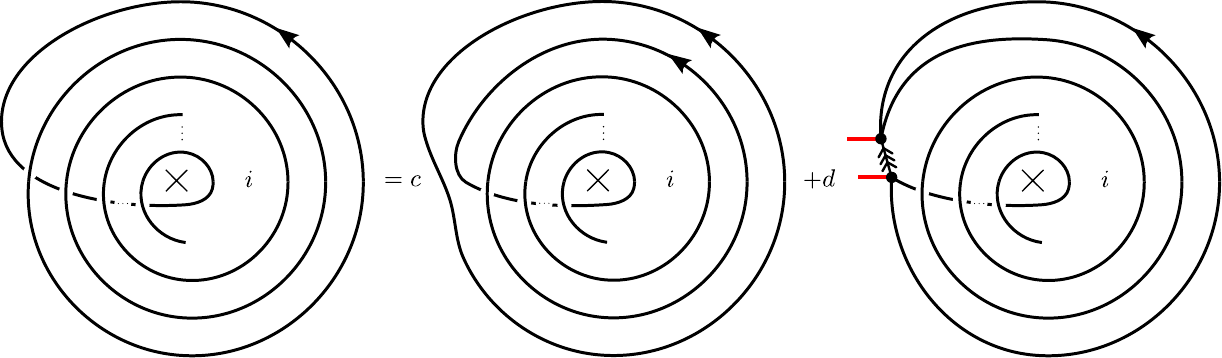}
\caption{The relation $\gamma_i=cx+dy$. A triple arrow indicates $n-2$ parallel strands.}
\label{fig:induction-1of3}
\end{figure}

We see that $x=\gamma_1\gamma_{i-1}\in\sk$.  By another use of the planarizing relation, write $y=cx^\prime+dy^\prime\in\sk$, where $x^\prime$ and $y^\prime$ are the webs appearing in Figure \ref{fig:induction-2of3}.  
 
\begin{figure}[htb!]
\centering
\includegraphics[width=0.6\textwidth]{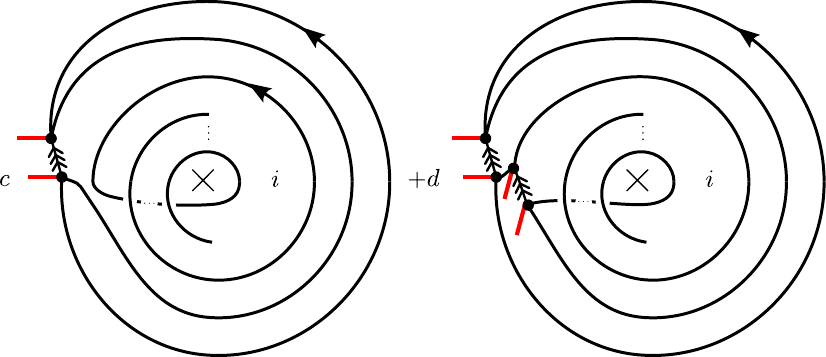}
\caption{The relation $y=cx^\prime+dy^\prime$.}
\label{fig:induction-2of3}
\end{figure}

We see that $x^\prime=B_2\gamma_{i-2}\in\sk$ or, in the particular case $n=2$, we have $x^\prime\propto\gamma_{i-2}\in\sk$.  Note that $y^\prime$ is isotopic to the web appearing on the left hand side of Figure \ref{fig:induction-3of3}.  
\begin{figure}[htb!]
\centering
\includegraphics[width=\textwidth]{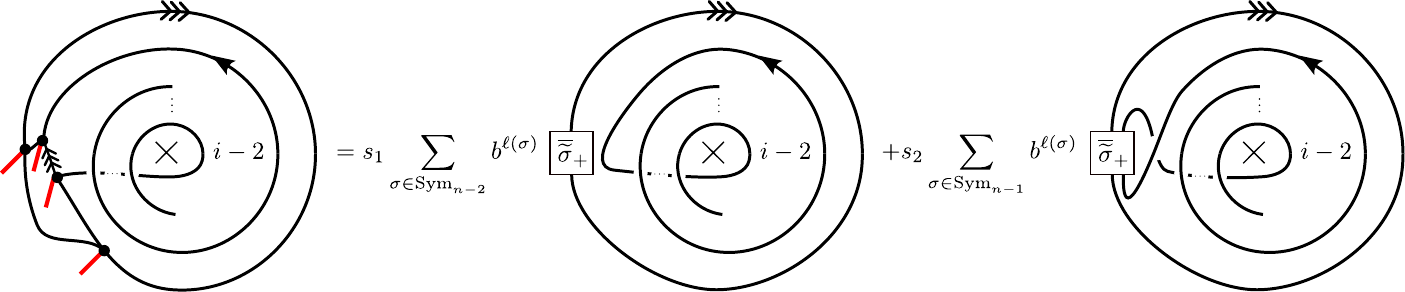}
\caption{The relation $y^\prime=s_1\sum_{\sigma\in\sym_{n-2}}b^{\ell(\sigma)}x^{\prime\prime}_\sigma+s_2\sum_{\sigma\in\sym_{n-1}}b^{\ell(\sigma)}y^{\prime\prime}_\sigma$.}
\label{fig:induction-3of3}
\end{figure}

Using Lemma \ref{lem:square-lemma}, write 
\begin{equation*}
y^\prime=s_1\sum_{\sigma\in\sym_{n-2}}b^{\ell(\sigma)}x^{\prime\prime}_\sigma+s_2\sum_{\sigma\in\sym_{n-1}}b^{\ell(\sigma)}y^{\prime\prime}_\sigma\in\sk
\end{equation*}
 where $s_1=a^2a_{n-1}a_{n-2}\prod_{k=2}^{n-1}a_k$ and $s_2=a^2a_{n-1}b\prod_{k=2}^{n-1}a_k$ with $a,b,a_k$ as in Lemma \ref{lem:square-lemma}, and where $x^{\prime\prime}_\sigma$ and $y^{\prime\prime}_\sigma$ are the webs appearing in Figure \ref{fig:induction-3of3}.  We see that $x^{\prime\prime}_\sigma=\rev{\lp\lift{\sigma}_+\rp}\gamma_{i-2}\in\sk$,
where recall the bar notation indicates the reverse orientation.  If $\alpha$ is a cutting arc as in Figure \ref{fig:fig7}, note by Lemma \ref{lem:refinement} that $\rev{\lp\lift{\sigma}_+\rp}\in\sub{\gamma_m;-(n-2)\leq m\leq0}\subset\basgen$, by hypothesis, and $y^{\prime\prime}_\sigma\in\sub{\gamma_m;-(n-2)\leq m\leq i-2}$. 

We now argue by induction on $|i|\geq n$.  For simplicity, assume $i>0$, the $i<0$ case following by a similar argument by reversing orientations and replacing $B_2$ above with $B_{n-2}$.  For the base case, $i=n$, by hypothesis and above we see that: $x=\gamma_1\gamma_{n-1}\in\basgen$; also $x^\prime=B_2\gamma_{n-2}\in\basgen$, or $x^\prime\propto\gamma_{n-2}=\gamma_0\in\basgen$ for $n=2$; also $x^{\prime\prime}_\sigma=\rev{\lp\lift{\sigma}_+\rp}\gamma_{n-2}\in\basgen$; and, $y^{\prime\prime}_\sigma\in\sub{\gamma_m;-(n-2)\leq m\leq n-2}\in\basgen$.  

For the induction step, assume in addition to the hypothesis that $\gamma_j\in\basgen$ for $j=n,n+1,\dots,i-1$. Similar to the base case, the preliminary calculation shows $\gamma_{i}\in\basgen$.  \end{proof}

\subsection{Main result of the section for the bad set:  finishing the proof}\label{ssec:small-skeins}

\begin{lemma}\label{lem:small-skeins}
Let $\altbadset\subset\sC$ be defined as in Section {\upshape\ref{ssec:main-statement-of-the-section}}, and let $\qroot\in\sC\setminus\altbadset$.  Then $\gamma_i\in\basgen$ for all $|i|\leq n-1$.  In fact, for $i=1,2,\dots,n-1$ we have $\gamma_i\in\sub{B_1,B_2,\dots,B_i}$ and $\gamma_{-i}\in\sub{B_{n-1},B_{n-2},\dots,B_{n-i}}$.    
\end{lemma}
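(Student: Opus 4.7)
The strategy is to perform a triangular inversion of the already-established relations $(\ast)$ and $(\ast\ast)$, which express the basis webs $B_i$ (resp. $B_{n-i}$) as an explicit nonzero scalar multiple of $\gamma_i$ (resp. $\gamma_{-i}$) plus strictly lower-order terms in the $\gamma$-generators.

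The key preliminary observation is that, for $\qroot \in \sC \setminus \altbadset$, the leading scalar
\[
c_i := q^{n(n-1)} \left(\prod_{k=i}^{n-1} q^{-k}[n-k] \right) P_i(\qroot)
\]
appearing in both $(\ast)$ and $(\ast\ast)$ is nonzero for every $i = 1, 2, \dots, n-1$. Indeed, the condition $\qroot \notin \badset$ forces $[m] \neq 0$ for every $m \in \{1, 2, \dots, n-1\}$ (recall $[m] \neq 0$ at $q = \pm 1$ by the formula $[m] = \sum_{i=1}^m q^{-m-1+2i}$), so each factor $[n-k]$ with $k \in \{i, \dots, n-1\}$ is nonzero; and the condition $\qroot \notin \altbadset \setminus \badset$ forces $P_i(\qroot) \neq 0$ (the case $i=1$ being trivial since $P_1 = 1$).

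I would then prove the positive-index statement $\gamma_i \in \sub{B_1, B_2, \dots, B_i}$ by induction on $i = 1, 2, \dots, n-1$. For the base case $i = 1$, relation $(\ast)$ reads $B_1 - c_1 \gamma_1 \in \sub{\,} = \C \cdot \class{\emptyset}$, so $\gamma_1 = c_1^{-1}(B_1 - s \class{\emptyset})$ for some scalar $s$, which lies in $\sub{B_1}$. For the inductive step, the hypothesis gives $\sub{\gamma_1, \dots, \gamma_{i-1}} \subset \sub{B_1, \dots, B_{i-1}}$, so $(\ast)$ yields
\[
\gamma_i = c_i^{-1} B_i - c_i^{-1} \cdot (\text{element of } \sub{\gamma_1, \dots, \gamma_{i-1}}) \in \sub{B_1, B_2, \dots, B_i}.
\]
The negative-index statement $\gamma_{-i} \in \sub{B_{n-1}, B_{n-2}, \dots, B_{n-i}}$ is proved by the entirely analogous induction on $i$, now using $(\ast\ast)$ in place of $(\ast)$ and the same nonvanishing of $c_i$.

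There is no real obstacle at this stage; the genuine work was already absorbed into the proof of $(\ast)$ and $(\ast\ast)$ in Section \ref{ssec:main-statement-of-the-section} and into the definition of $\altbadset$, which was tailored precisely to avoid the vanishing of the $c_i$. The inductive inversion above is therefore a formal, bookkeeping step once the leading-term expressions are in hand.
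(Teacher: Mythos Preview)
Your proof is correct and follows essentially the same approach as the paper's: both argue by induction on $i$, inverting the leading coefficient in $(\ast)$ and $(\ast\ast)$ after observing that $\qroot\notin\altbadset$ makes this coefficient nonzero. The only cosmetic difference is that the paper handles the base case $i=1$ via a direct peeling-relation identity $B_1=(\prod_k a_k)\gamma_1$ rather than by specializing $(\ast)$, but your specialization works just as well.
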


\begin{proof}
To start, $\gamma_0=\class{\emptyset}\in\basgen$ by definition.  When in the positive position (Section \ref{proof-of-the-last-statement-of-maintheorem}), we have $B_1=(\prod_{k=1}^{n-1}a_k)\gamma_1\in\sk$ by the peeling relation, where $a_k$ is defined as in Lemma \ref{lem:square-lemma}.  Since $\qroot\notin\badset\subset\altbadset$ (Theorem \ref{thm:maintheorem}), we have $\prod_{k=1}^{n-1}a_k\neq0$, so, by inverting, $\gamma_1\in\sub{B_1}$.  Similarly, when in the negative position, the peeling relation gives $B_{n-1}=(\prod_{k=1}^{n-1}a_k)\gamma_{-1}\in\sk$, hence $\gamma_{-1}\in\sub{B_{n-1}}$.  Assume by induction that $\gamma_k\in\sub{B_1,B_2,\dots,B_k}$ for $k=1,2,\dots,i-1\leq n-2$.  Since $\qroot\notin\altbadset$, we can invert Equation \eqref{eq:rootsinduction1} to write $\gamma_i\in\sub{\gamma_1,\gamma_2,\dots,\gamma_{i-1},B_i}$.  By the induction hypothesis, we obtain $\gamma_i\in\sub{B_1, B_2, \dots, B_{i-1}, B_i}$ as desired.  Similarly, by induction, $\gamma_{-i}\in\sub{B_{n-1}, B_{n-2}, \dots, B_{n-(i-1)}, B_{n-i}}$ by inverting Equation \eqref{eq:rootsinduction2}.  
\end{proof}

\begin{proof}[Proof of Proposition {\upshape\ref{prop:gammastobasis}}]
Since $\qroot\notin\altbadset$, we have that $\gamma_i\in\basgen$ for all $|i|\leq n-1$.  Since $\qroot\notin\badset\subset\altbadset$, it follows by Lemma \ref{lem:inductionlargei} that $\gamma_i\in\basgen$ for all $|i|\geq n$.  So $\gamma_m\in\basgen$ for all $m\in\Z$ as desired.  
\end{proof}

\subsection{Main result of the section:  identifying the bad set}\label{ssec:main-result-of-the-section}

Let $\badset\subset\sC$ be as in Theorem \ref{thm:maintheorem}, and let $\altbadset\subset\sC$ be as in Section {\upshape\ref{ssec:main-statement-of-the-section}}.  Note that $\badset\subset\altbadset$.  In this section we prove:

\begin{lemma}\label{lem:twobadsetsarethesame}
We have $\altbadset=\badset$.  Equivalently, the roots $\qroot\in\sC$ of the Laurent polynomials $P_i(\qroot)$ are in $\badset$ for all $i=1,2,\dots,n-1$.
\end{lemma}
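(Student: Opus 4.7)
The approach is a proof by contradiction exploiting the asymmetry between Section \ref{sec:spanning}'s spanning argument (which requires $\qroot \in \sC \setminus \altbadset$) and Section \ref{sec:linear-independence}'s forthcoming linear independence argument, which will establish the algebraic independence of $B_1, B_2, \dots, B_{n-1}$ throughout $\sC \setminus \badset$. Suppose for contradiction that there exists $\qroot_0 \in \altbadset \setminus \badset$. By definition of $\altbadset$ together with $P_1(\qroot) = 1$, I may choose $i_0 \in \{2, 3, \dots, n-1\}$ to be the smallest index for which $P_{i_0}(\qroot_0) = 0$.

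First I would specialize Equation \eqref{eq:rootsinduction1} at $\qroot = \qroot_0$. Since $\qroot_0 \notin \badset$, the quantum integer factors $[n-k]$ are nonzero for $k = i_0, i_0+1, \dots, n-1$, so the scalar $q^{n(n-1)} \prod_{k=i_0}^{n-1} q^{-k}[n-k]$ is nonzero. Combined with $P_{i_0}(\qroot_0) = 0$, Equation \eqref{eq:rootsinduction1} collapses at $\qroot_0$ to the statement $B_{i_0} \in \sub{\gamma_1, \gamma_2, \dots, \gamma_{i_0 - 1}}$ in $\sk$.

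Next, by minimality of $i_0$ we have $P_k(\qroot_0) \neq 0$ for every $k = 1, 2, \dots, i_0 - 1$, so the induction performed in the proof of Lemma \ref{lem:small-skeins}, run only through index $i_0 - 1$, still goes through at $\qroot_0$: the base case $B_1 \propto \gamma_1$ needs only $\qroot_0 \notin \badset$, and each subsequent inversion of \eqref{eq:rootsinduction1} needs only the nonvanishing of $P_k(\qroot_0)$ together with $\qroot_0 \notin \badset$. This yields $\gamma_k \in \sub{B_1, B_2, \dots, B_k}$ at $\qroot_0$ for every $k = 1, 2, \dots, i_0 - 1$. Substituting back into the conclusion of the previous step produces a nontrivial polynomial relation $B_{i_0} \in \sub{B_1, B_2, \dots, B_{i_0 - 1}}$ at $\qroot_0$, which in particular exhibits $B_1, B_2, \dots, B_{n-1}$ as algebraically dependent at $\qroot_0$.

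Finally, this relation contradicts the linear independence of the monomials $B_1^{k_1} B_2^{k_2} \cdots B_{n-1}^{k_{n-1}}$ at $\qroot_0$, which Section \ref{sec:linear-independence} will establish through a leading-term analysis of the quantum trace map $\qtr$. The main obstacle, and the reason the argument is non-circular, is precisely the requirement that Section \ref{sec:linear-independence}'s linear independence must be valid on the full set $\sC \setminus \badset$, including points $\qroot_0 \in \altbadset \setminus \badset$. This broader range is natural because linear independence is detected by distinct highest monomials of $\qtr(B_1^{k_1} B_2^{k_2} \cdots B_{n-1}^{k_{n-1}})$ in the quantum torus $\mcal{T}_n^q(\ann, \idealtriang)$, a condition encoded by tropical web coordinates of the basis webs and therefore insensitive to the roots of the auxiliary polynomials $P_i$ that define $\altbadset \setminus \badset$.
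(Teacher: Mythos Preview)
Your proof is correct and follows essentially the same approach as the paper's: assume a root $\qroot_0\notin\badset$ of some $P_i$, take $i_0$ minimal, use Equation \eqref{eq:rootsinduction1} to place $B_{i_0}\in\sub{\gamma_1,\dots,\gamma_{i_0-1}}$, invoke the proof of Lemma \ref{lem:small-skeins} up to index $i_0-1$ to get $\gamma_k\in\sub{B_1,\dots,B_k}$, and contradict Proposition \ref{prop:second-main-result-independence}. Your remark that the nonvanishing of the quantum integer factors $[n-k]$ is not actually needed at the step where you conclude $B_{i_0}\in\sub{\gamma_1,\dots,\gamma_{i_0-1}}$ (the coefficient vanishes simply because $P_{i_0}(\qroot_0)=0$), but it does no harm.
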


To prove the lemma, we will assume the linear independence property, Proposition \ref{prop:second-main-result-independence}, from Section \ref{sec:linear-independence} below.  Naturally then, the proof of this proposition will not depend on Lemma \ref{lem:twobadsetsarethesame} being true.  

\begin{proof}[Proof of Lemma {\upshape\ref{lem:twobadsetsarethesame}}]
By Equation \eqref{eq:rootsinduction1}, if $\qroot\in\sC$ is a root of $P_i(\qroot)$, then $B_i$ is in $\sub{\gamma_1,\gamma_2,\dots,\gamma_{i-1}}$.  Recall that $P_1(\qroot)=1$ and, by the proof of Lemma \ref{lem:small-skeins}, if $\qroot\notin\badset$ then $\gamma_1\in\sub{B_1}$.  Suppose $\qroot$ is a root not in $\badset$.  Let $2\leq i\leq n-1$ be minimal such that $\qroot$ is a root of $P_i(\qroot)$.  Again by the proof of Lemma \ref{lem:small-skeins}, for $1\leq j<i$ we then have $\gamma_j\in\sub{B_1,B_2,\dots,B_j}$.  It follows that $B_i\in\sub{\gamma_1,\gamma_2,\dots,\gamma_{i-1}}\subset\sub{B_1,B_2,\dots,B_{i-1}}$, which contradicts the linear independence property of Proposition \ref{prop:second-main-result-independence}.  
\end{proof}

Combining Proposition \ref{prop:gammas-generate}, Proposition \ref{prop:gammastobasis}, and Lemma \ref{lem:twobadsetsarethesame}, we immediately obtain the main result of the section:

\begin{proposition}\label{cor:basisgenerates}
For all $\qroot\in\sC\setminus\badset$, we have $\sk=\sub{B_1,B_2,\dots,B_{n-1}}$.  \qed
\end{proposition}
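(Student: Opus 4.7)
The plan is to chain together the three preceding results without any new work. By Proposition \ref{prop:gammas-generate}, for any $\qroot \in \sC$ the power knots $\{\gamma_m\}_{m \in \Z}$ generate $\sk$ as an algebra, so to conclude $\sk = \basgen$ it suffices to show each $\gamma_m$ lies in $\basgen = \sub{B_1, B_2, \dots, B_{n-1}}$. This containment is supplied by Proposition \ref{prop:gammastobasis}, but only for $\qroot \in \sC \setminus \altbadset$. To match the parameter set in the stated conclusion, I would then invoke Lemma \ref{lem:twobadsetsarethesame}, which identifies $\altbadset$ with $\badset$, yielding the desired conclusion $\sk \subseteq \basgen$ for every $\qroot \in \sC \setminus \badset$. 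The reverse containment is tautological since $B_1, \dots, B_{n-1} \in \sk$.

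All of the substantive work has happened upstream. Proposition \ref{prop:gammas-generate} is the Lickorish-style reduction via U-turn elimination, monogon/bigon removal, and crossing changes, collapsing any knot in the thickened annulus up to lower-crossing corrections to a permutation knot, which is in turn isotopic to a mutant of some $\gamma_m$. Proposition \ref{prop:gammastobasis} exploits the explicit formulas \eqref{eq:rootsinduction1} and \eqref{eq:rootsinduction2} for $B_i$ and $B_{n-i}$ in terms of $\gamma_{\pm i}$ plus lower-order tails: the definition of $\altbadset$ is precisely what is needed to invert these triangular relations and recover $\gamma_{\pm i}$ for $|i| \leq n-1$, after which Lemma \ref{lem:inductionlargei} bootstraps via the local square relation of Lemma \ref{lem:square-lemma} to cover all $|m| \geq n$.

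The genuine obstacle is Lemma \ref{lem:twobadsetsarethesame}, which is the one place where the spanning argument depends on the linear independence result of Section \ref{sec:linear-independence}. That lemma shows that the roots of the auxiliary leading-coefficient polynomials $P_i(\qroot)$ are confined to the roots of unity already collected in $\badset$; without this, the spanning result would only hold on the smaller set $\sC \setminus \altbadset$. Once that identification is granted, the proof of the final proposition itself is one line: combine the three inputs.
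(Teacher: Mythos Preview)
Your proposal is correct and follows exactly the paper's approach: the paper states the proposition as an immediate consequence of combining Proposition~\ref{prop:gammas-generate}, Proposition~\ref{prop:gammastobasis}, and Lemma~\ref{lem:twobadsetsarethesame}, with no further argument. Your added commentary accurately summarizes the upstream work, including the dependence of Lemma~\ref{lem:twobadsetsarethesame} on the linear independence result.
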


We end this sub-section with a brief aside, giving a partial answer to the question posed in Remark \ref{rem:remarkwithaquestion}.    

\begin{proposition}
Let $\beta\in\braid_i$ be a braid link in $\ann\times\intt$ on $i$ strands, $1\leq i\leq n-1$.  There exist unique Laurent polynomials $P_\beta(\qroot)$ and $\rev{P}_\beta(\qroot)$ in $\qroot$ such that $\beta-P_\beta(\qroot)\gamma_i\in\sub{\gamma_1,\gamma_2,\dots,\gamma_{i-1}}$ and $\rev{\beta}-\rev{P}_\beta(\qroot)\gamma_{-i}\in\sub{\gamma_{-1},\gamma_{-2},\dots,\gamma_{-(i-1)}}$ for all $\qroot\in\sC$.  In addition, these polynomials satisfy the properties (1), (2), (3) of Lemma {\upshape\ref{lem:leading-skein-term}}.  
\end{proposition}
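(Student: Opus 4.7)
Existence of $P_\beta$ and $\rev{P}_\beta$, together with properties (1), (2), (3), is already delivered by Lemma \ref{lem:leading-skein-term}, so the only new content is \emph{uniqueness} in the range $1 \leq i \leq n-1$. Once uniqueness is established, any polynomials satisfying the defining property must coincide with those produced by Lemma \ref{lem:leading-skein-term}, and so properties (1), (2), (3) transfer automatically to $P_\beta$ and $\rev{P}_\beta$.

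For uniqueness, suppose $P$ and $P'$ both satisfy $\beta - P(\qroot)\gamma_i \in \sub{\gamma_1,\ldots,\gamma_{i-1}}$ in $\sk$ for every $\qroot \in \sC$. Their difference $D(\qroot) = P(\qroot) - P'(\qroot)$ is then a Laurent polynomial with $D(\qroot)\gamma_i \in \sub{\gamma_1,\ldots,\gamma_{i-1}}$ in $\sk$ for every $\qroot \in \sC$. The plan is to show $D(\qroot) = 0$ for every $\qroot$ in the cofinite, hence Zariski-dense, subset $\sC \setminus \badset$, which forces $D \equiv 0$ as a Laurent polynomial.

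To carry this out, fix $\qroot \in \sC \setminus \badset$. Combining Lemma \ref{lem:twobadsetsarethesame} (so $\altbadset = \badset$) with the proof of Lemma \ref{lem:small-skeins}, Equation \eqref{eq:rootsinduction1} inductively yields, for each $1 \leq j \leq i$, a relation $\gamma_j = \lambda_j B_j + r_j$ with $\lambda_j = \lambda_j(\qroot) \in \sC$ and $r_j \in \sub{B_1,\ldots,B_{j-1}}$. By the linear independence half of Proposition \ref{prop:second-main-result-independence}, the assignment $B_j \mapsto x_j$ identifies $\basgen = \sub{B_1,\ldots,B_{n-1}}$ with the polynomial algebra $\C[x_1,\ldots,x_{n-1}]$. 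Under this identification, each $\gamma_j$ corresponds to a polynomial of the shape $\lambda_j x_j + p_j(x_1,\ldots,x_{j-1})$, so that $\sub{\gamma_1,\ldots,\gamma_{i-1}}$ is contained in $\C[x_1,\ldots,x_{i-1}]$. The containment $D(\qroot)\gamma_i \in \sub{\gamma_1,\ldots,\gamma_{i-1}}$ then reads
\[
D(\qroot)\lambda_i x_i + D(\qroot) p_i(x_1,\ldots,x_{i-1}) \in \C[x_1,\ldots,x_{i-1}],
\]
and comparing the coefficient of $x_i$ gives $D(\qroot)\lambda_i = 0$, hence $D(\qroot) = 0$ as required. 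The same argument with reversed orientations, using Equation \eqref{eq:rootsinduction2} and the generators $B_{n-1},B_{n-2},\ldots,B_{n-i}$ in place of $B_1,B_2,\ldots,B_i$, handles uniqueness of $\rev{P}_\beta$.

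The main potential pitfall is organizational rather than conceptual: the uniqueness argument depends on Proposition \ref{prop:second-main-result-independence} and on Lemma \ref{lem:twobadsetsarethesame}, so this proposition must be placed after those results in the logical order of the paper. Apart from this bookkeeping, the argument is a routine application of the polynomial algebra structure furnished by the main theorem.
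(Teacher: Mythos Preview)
The proposal is correct and follows essentially the same route as the paper: existence from Lemma~\ref{lem:leading-skein-term}, and uniqueness by showing that for $\qroot\in\sC\setminus\badset$ the relation $D(\qroot)\gamma_i\in\sub{\gamma_1,\dots,\gamma_{i-1}}\subset\sub{B_1,\dots,B_{i-1}}$ together with $\gamma_i=\lambda_i B_i+(\text{lower})$ and the linear independence of Proposition~\ref{prop:second-main-result-independence} forces $D(\qroot)=0$, then extending to all $\qroot$ by density. Your remark about logical placement after Lemma~\ref{lem:twobadsetsarethesame} and Proposition~\ref{prop:second-main-result-independence} is apt and matches how the paper organizes things.
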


\begin{proof}
Existence is by Lemma {\upshape\ref{lem:leading-skein-term}}.  We give the argument for $P_\beta(\qroot)$, the proof for $\rev{P}_\beta(\qroot)$ being essentially the same.  Let $P_\beta(\qroot)$ and $P^\prime_\beta(\qroot)$ be two such Laurent polynomials, and assume $\qroot\in\sC\setminus\badset$.  Then $\gamma_i(P_\beta(\qroot)-P^\prime_\beta(\qroot))\in\sub{\gamma_1,\gamma_2,\dots,\gamma_{i-1}}\subset\sub{B_1,B_2,\dots,B_{i-1}}$ by Lemma \ref{lem:small-skeins}.  Moreover, by the proof of Lemma \ref{lem:small-skeins}, $\gamma_i-a_i B_i\in\sub{B_1,B_2,\dots,B_{i-1}}$ for $a_i\neq0$.  If it were true that $P_\beta(\qroot)-P^\prime_\beta(\qroot)\neq0$, then it would follow that $B_i\in\sub{B_1,B_2,\dots,B_{i-1}}$ contradicting linear independence.  So $P_\beta(\qroot)=P^\prime_\beta(\qroot)$. As the functions are continuous and equal outside of the finite set $Q$, by continuity this equality holds for $\qroot\in\badset$ as well. 
\end{proof}

In light of this proposition, the Laurent polynomials $P_i(\qroot)$ for $i=1,2,\dots,n-1$ used to prove Proposition \ref{cor:basisgenerates} were indeed the natural candidates to consider.  Explicitly, for $i=1,2,3,4$ we computed that  
\begin{align*}
P_i(\qroot)&=(-1)^{i-1}q^{-(i-1)^2+(i-1)/n}\prod_{\zeta=(2j)\text{-root of unity}, \zeta\neq\pm1, 2\leq j\leq i-1}(q-\zeta)
\\&=(-1)^{i-1}q^{-i(i-1)/2+(i-1)/n}[i-1]!.
\end{align*}
In particular, for these computed examples, the roots of $P_i(\qroot)$ are in $\badset$ as required by Lemma \ref{lem:twobadsetsarethesame}.  We do not have a proof that the first equality above  continues to hold for  all~$i$.

As one last remark, if the above formula is indeed valid for all indices $i$, then it appears that it should not be too difficult, guided by the arguments of the present paper, to establish the main result in the more abstract setting where $q$ is treated as a formal variable and where some quantum integers are taken to be formally invertible.

\subsection{Digression:  countable generating set for general surfaces}\label{ssec:a-result-for-general-surfaces}

In this sub-section, we digress to prove a generalization of Proposition \ref{prop:gammas-generate}, which will not be used in the rest of the paper.  This is a very slight generalization of a well-known result for the HOMFLYPT skein algebra \cite{turaevMR0964255} of a surface.  %For completeness, we give a proof in the arXiv version \cite{cremaschi2024monomialwebbasissln} of this article.

Let $\surf$ be an oriented surface, and let $\sk$ be the associated skein algebra (Remark \ref{rem:surfaces}).  For an element $\gamma\in\pi_1(\surf,x_0)$ of the fundamental group, the self intersection number $\iota(\gamma,\gamma)$ is the minimum number of self intersections of representatives, having only double points, of $\gamma$ in its homotopy class.  Recall that an element $\alpha\in\pi_1(\surf,x_0)$ of the fundamental group is primitive if $\alpha$ is not a power, namely, $\alpha=(\alpha^\prime)^k$ implies $k=1$.  Denote by $\Pi\subset\pi_1(\surf,x_0)$ the set of primitive elements.  For each primitive element $\alpha\in\Pi$, we choose a representative of $\alpha$ realizing its self intersection number $\iota(\alpha,\alpha)$, and by abuse of notation we also denote this representative by $\alpha$.  We can pick a solid torus $\tor_\alpha\subset\surf\times\intt$ whose projection on $\surf$ is a regular neighborhood of $\alpha$, that is, an immersed annulus.  In fact, there are $2^{\iota(\alpha,\alpha)}$ such choices, depending on crossing data.  For each such solid torus $\tor_\alpha\cong\ann\times\intt$ we have a copy $\gen_\alpha\cong\gen(\ann\times\intt)$ of the generating set for the skein algebra of the annulus.   

\begin{proposition}\label{prop:gensurf} 
For an oriented surface $\surf$, we have $\sk=\sub{\gen_\alpha;\alpha\in\Pi}$.  \qed
\end{proposition}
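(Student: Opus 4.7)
My plan is to follow the argument of Proposition~\ref{prop:gammas-generate}, with the single generating set $\gen$ replaced by the union of the $\gen_\alpha$ over primitive classes $\alpha\in\Pi$.  The same sequence of reductions applies at the outset:  every element of $\sk$ is a linear combination of webs, the vertex removing relation writes each web as a linear combination of links, and the crossing change relation between distinct components reduces the problem, by induction on crossing number, to the case of a single knot $K\subset\surf\times\intt$ with $N$ crossings, under the inductive assumption that any link with fewer than $N$ crossings already lies in $\sub{\gen_\alpha;\alpha\in\Pi}$.

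Next I would analyze $K$ through its projection $\proj{K}$ on $\surf$, whose free homotopy class is either trivial or equals $[\alpha^k]$ for a unique primitive $\alpha\in\Pi$ and nonzero $k\in\Z$.  The strategy in both cases is to use a sequence of Reidemeister moves and crossing changes of the diagram to produce a knot $K'$, contained in either an embedded $3$-ball (trivial case) or in the solid torus $\tor_\alpha$ (non-trivial case), with $\class{K}\indeq\class{K'}$.  The Reidemeister moves correspond to ambient isotopies of $K$ in $\surf\times\intt$ (together with the kink removing relation), while each crossing change at a crossing of the current diagram produces, via the crossing change skein relation, a correction supported on a link with strictly fewer crossings than the current diagram.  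In the trivial case, $\class{K'}$ becomes a scalar multiple of $\class{\emptyset}$ by the unknot removing relation; in the non-trivial case, an application of Proposition~\ref{prop:gammas-generate} inside $\tor_\alpha\cong\ann\times\intt$ yields $\class{K'}\in\gen_\alpha$, closing the induction.

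The hard part will be arranging the sequence of Reidemeister and crossing change moves so that no intermediate diagram exceeds $N$ crossings, ensuring that the error terms from crossing changes stay within the inductive hypothesis.  This is the direct analogue of the monogon and bigon removal step in Lemma~\ref{lem:annulus-lemma}:  I would first bring $\proj{K}$ into minimal position within its free homotopy class by iterative innermost monogon and bigon removal via crossing changes, and then use further crossing changes to make the resulting curve ascending, at which point it can be isotoped into $\tor_\alpha$ (or a ball).  Such a normal form is standard in the theory of curves on surfaces and appears in the HOMFLYPT setting in Turaev~\cite{turaevMR0964255}; the adaptation to the $\nsl$ webs used here follows by combining this analysis with the vertex removing reduction performed at the start.
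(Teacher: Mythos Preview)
Your approach mirrors the paper's almost exactly: both reduce to a single knot $K$ by the standard web-to-link and link-to-knot reductions, bring the projection $\proj{K}$ into minimal position within its free homotopy class via innermost monogon and bigon removal (the paper cites \cite{hassMR0804478} for their existence), and use the ascending trick so that each removal is an isotopy plus crossing changes never exceeding $N$ crossings.

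The one step you gloss over is the last one, ``at which point it can be isotoped into $\tor_\alpha$.'' A minimal-position curve $\gamma^\prime$ representing $[\alpha^k]$ need not lie in the immersed annular neighborhood of your chosen representative $\alpha$, so getting the knot into $\tor_\alpha$ is not a three-dimensional isotopy but rather the lift of a surface homotopy from $\gamma^\prime$ to $\alpha^k$. To keep the error terms within the inductive hypothesis you must perform this homotopy through curves whose crossing number never exceeds that of $\gamma^\prime$. The paper addresses precisely this point in Lemma~\ref{lem:isotopy-in-3-manifold} by invoking the theorem of Hass--Scott \cite{hassMR1216633} that any two homotopic curves in minimal position are connected by a homotopy through curves of constant crossing number; the constant-height lift of this homotopy to $\surf\times\intt$ then only requires crossing changes at the finitely many times the drop passes another strand, each occurring on a diagram with no more than $N$ crossings. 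Once you make this step explicit your argument is complete and coincides with the paper's.
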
 

%	\begin{comment}
We work toward the proof of this proposition.  Let $K$ be a knot in $\surf\times\intt$, and let $x\in\surf$ be a point  on the projection $\gamma$ of $K$ on the surface.  We say that $K$ is an ascending knot with respect to $x$ if, following orientation, the height of $K$ in $\surf\times\intt$ strictly increases, except over a neighborhood of $x$ where $K$ drops down from its maximum height to its lowest height at a point above $x$.  We may take this neighborhood in $\surf$ around $x$ to be arbitrarily small.  

\begin{lemma}\label{lem:ascending-knot}
Let $K$ be a knot in $\surf\times\intt$ with $N$ crossings, and let $x\in\surf$ be a point on the projection $\gamma$ of $K$ on the surface.  Then $\class{K}\indeq\class{K^\prime}$ where $K^\prime$ is an ascending knot with respect to $x$, and with the same projection $\gamma$ as $K$.  In particular, $K$ and $K^\prime$ have the same number of crossings.  
\end{lemma}

\begin{proof}
Starting at a point of $K$ above $x$, following orientation, lift the knot in the vertical direction, dropping at the end.  Apply crossing change relations if need be to pass the knot from below an impeding strand to above.  Note that the projection of the knot to the surface is unchanged throughout this process.
\end{proof}

\begin{lemma}\label{lem:can-take-minimal-crossings}
Let $K$ be a knot in $\surf\times\intt$ with $N$ crossings, with projection $\gamma$ of $K$ on the surface.  Then there is a knot $K^\prime$ with $\iota(\gamma,\gamma)$ crossings such that $\class{K}\indeq\class{K^\prime}$.  
\end{lemma}

\begin{proof}
If the number of crossings of $\gamma$ is strictly greater than $\iota(\gamma,\gamma)$, then by \cite{hassMR0804478} either $\gamma$ has an immersed monogon or an immersed bigon delineated by one or two arcs, respectively.  We will algorithmically remove these immersed bigons and monogons.  Choose such a bigon or monogon.  Let $x$ be a point in $\gamma$ not lying on the bigon or monogon.  Let $K^\prime$ be an ascending knot with respect to $x$ as in Lemma \ref{lem:ascending-knot}, which in particular has the same projection $\gamma$ as $K$.  Putting the drop of $K^\prime$ in a sufficiently small neighborhood over $x$, the bigon or monogon lies away from the drop.  In the case of a bigon, one arc lies entirely below the other, so by isotopy the bigon can be slid apart.  Similarly, in the case of a monogon, by isotopy the monogon can be shrunk into a neighborhood only containing the monogon, where it can be removed, at the cost of a non-zero scalar, by the kink removing relation.   Note, by the ascending condition, that there are no strands obstructing the isotopies in either case.  Repeat this process, possibly changing the point $x$ at each step, until there are no bigons or monogons.  
\end{proof}

\begin{lemma}\label{lem:isotopy-in-3-manifold}
Let $K$ be a knot in $\surf\times\intt$ with $N$ crossings.  The projection $\gamma$ of $K$ determines an element $\alpha^k$ in $\pi_1(\surf,x_0)$, up to conjugation, for some $\alpha\in\Pi$.  Then there is a knot $K^\prime$ lying in the solid torus $\tor_\alpha$ such that $\class{K}\indeq\class{K^\prime}$.  
\end{lemma}

\begin{proof}
We may assume that the basepoint $x_0$ lies on $\gamma$, and that $\alpha\in\Pi$ is chosen such that $\gamma=\alpha^k\in\pi_1(\surf,x_0)$.  Let $K^\prime$ be a knot as in Lemma \ref{lem:can-take-minimal-crossings}.  So $K^\prime$ has $N^\prime=\iota(\gamma,\gamma)\leq N$ crossings and $\class{K}\indeq\class{K^\prime}$.  We may assume that the projection $\gamma^\prime$ of $K^\prime$ contains the basepoint $x_0$.  By Lemma \ref{lem:ascending-knot}, we may also assume that $K^\prime$ is ascending with respect to $x_0$, with an arbitrarily small neighborhood over $x_0$ containing the drop.  By the construction of $K^\prime$, we have $\alpha^k=\gamma=\gamma^\prime\in\pi_1(\surf,x_0)$.  Since $\gamma^\prime$ achieves its self intersection number, by \cite{hassMR1216633} the homotopy $\gamma_t$ from $\gamma^\prime$ to $\alpha^k$ can be taken such that throughout the homotopy the crossing number of $\gamma_t$ is constant equal to $N^\prime$.  Assume for the moment that $\iota(\alpha,\alpha)=0$, so that the solid torus $\tor_\alpha$ is uniquely chosen.  Lift the homotopy $\gamma_t$ to a constant height homotopy $K_t$ of the ascending knot $K^\prime$, fixing the drop within a small neighborhood over $x_0$.  This lifted homotopy $K_t$ can be realized by a sequence of isotopies and crossing change relations, the latter corresponding to finitely many times $t_i$ where $K_t$ passes through the drop.  Since the crossing number $N^\prime$ is constant throughout the homotopy, we have $\class{K_{t_i}}\altindeq\class{K_{t_{i+1}}}$.  At the end of the homotopy, $K_1$ lies in the solid torus $\tor_\alpha$ and $\class{K^\prime}\altindeq\class{K_1}$, implying $\class{K^\prime}\indeq\class{K_1}$ as desired.  In the case that $\iota(\alpha,\alpha)\neq0$, if part of $K_1$ happens to lie in one of the other solid tori lifting $\alpha$, then $K_1$ can be corrected into the solid torus $\tor_\alpha$ by crossing change relations.  
\end{proof}

\begin{proof}[Proof of Proposition {\upshape\ref{prop:gensurf}}]
Put $\gen=\sub{\gen_\alpha;\alpha\in\Pi}$.  Identically as in the proof of Proposition \ref{prop:gammas-generate}, arguing by induction on the number of crossings, it suffices to show that $\class{K}\in\gen$ for any knot $K$ in $\surf\times\intt$ with $N$ crossings.  By Lemma \ref{lem:isotopy-in-3-manifold}, there is a knot $K^\prime$ in the solid torus $\tor_\alpha$ for some primitive $\alpha\in\Pi$ such that $\class{K}\indeq\class{K^\prime}$.  By Proposition \ref{prop:gammas-generate}, we have $\class{K^\prime}\in\gen_\alpha\subset\gen$.
\end{proof}
%	\end{comment}

\begin{remark}
 Proposition \ref{prop:gensurf} provides an explicit countably infinite generating set of the skein algebra~$\sk$.  
\end{remark}

\section{Linear independence: the quantum trace map}\label{sec:linear-independence}

In this section, we prove the linear independence property in Theorem \ref{thm:maintheorem} as an application of the quantum trace map:

\begin{proposition}\label{prop:second-main-result-independence}
Let $\badset\subset\sC$ be defined as in Theorem {\upshape\ref{thm:maintheorem}}.  For all $\qroot\in\sC\setminus\badset$, the monomials of the form $B_{1}^{k_1}B_{2}^{k_2}\cdots B_{n-1}^{k_{n-1}}\in\sk$ for $k_i\in\nnegZ$ are linearly independent in $\sk$.
\end{proposition}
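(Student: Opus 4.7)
The plan is to embed the skein algebra into a quantum torus via the $\nsl$ quantum trace map, and then to exploit the fact that distinct Laurent monomials in a quantum torus are linearly independent over $\C$. Fix an ideal triangulation $\idealtriang$ of the twice punctured sphere $\ann$, and consider the quantum trace
\begin{equation*}
\qtr \colon \sk \longrightarrow \mcal{T}_n^q(\ann, \idealtriang).
\end{equation*}
The codomain is a $q$-commuting Laurent polynomial algebra on some finite set of generators $X_1, \ldots, X_N$; in particular it is an Ore domain, and distinct Laurent monomials $X^v = X_1^{v_1} \cdots X_N^{v_N}$ are linearly independent over $\C$.

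The key computation is to show that, for each $i = 1, \ldots, n-1$, the state-sum expression of $\qtr(B_i)$ over the intersections of the planar web $B_i$ with the edges of $\idealtriang$ admits, with respect to some total order on $\mbb{Z}^N$ (lexicographic will do), a single dominant monomial
\begin{equation*}
\mrm{LT}(\qtr(B_i)) = c_i(\qroot)\, X_1^{v_{i,1}} X_2^{v_{i,2}} \cdots X_N^{v_{i,N}}.
\end{equation*}
The exponent vector $v_i \in \nnegZ^N$ should be precisely the Fock--Goncharov tropical coordinate vector of $B_i$ on $\idealtriang$, and the coefficient $c_i(\qroot)$ should factor as a $\qroot$-monomial times a product of quantum integers $[m]$ with $2 \leq m \leq n-1$, so that $c_i(\qroot) \neq 0$ precisely on $\sC \setminus \badset$. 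First I would set up this computation, and then verify, by direct inspection of the tropical data on each triangle of $\idealtriang$, that the $n-1$ vectors $v_1, \ldots, v_{n-1}$ are linearly independent in $\mbb{Z}^N$; this should be natural because $B_i$ consists of $i$ parallel strands winding one way around the annulus and $n-i$ the other way, so that its tropical coordinates depend in an injective, essentially triangular, fashion on $i$.

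The conclusion is then short. In the Ore domain $\mcal{T}_n^q(\ann, \idealtriang)$, the leading term of a product is, up to a nonzero $\qroot$-scalar, the product of the leading terms, so
\begin{equation*}
\mrm{LT}\bigl(\qtr(B_1^{k_1} \cdots B_{n-1}^{k_{n-1}})\bigr) \propto X_1^{\sum_i k_i v_{i,1}} \cdots X_N^{\sum_i k_i v_{i,N}}.
\end{equation*}
Linear independence of $v_1, \ldots, v_{n-1}$ makes the map $(k_1, \ldots, k_{n-1}) \mapsto \sum_i k_i v_i$ injective, so distinct exponent tuples yield distinct leading monomials. Any nontrivial linear relation among the $B$-monomials in $\sk$ would therefore push, under $\qtr$, to a nontrivial linear relation among distinct Laurent monomials in $\mcal{T}_n^q(\ann, \idealtriang)$, which is absurd. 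Proposition \ref{prop:second-main-result-independence} follows.

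The main obstacle is the state-sum computation establishing the existence of a unique dominant monomial for $\qtr(B_i)$ with the expected tropical exponent $v_i$ and the expected coefficient $c_i(\qroot)$. Two things must go right: no other state may contribute a Laurent monomial with the same maximal exponent (else the purported leading term could cancel in the sum), and the dominant coefficient must truly vanish exactly on $\badset$. This is where the shape of the webs $B_i$ and the restriction on $\qroot$ genuinely interact, and it is what forces the identification of $\badset$ as the bad set in the first place.
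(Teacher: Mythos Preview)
Your plan is correct and matches the paper's approach essentially step for step: compute the highest-degree monomial of $\qtr(B_i)$, check its coefficient is a $q$-monomial times $[k]!\,[n-k]!$ (hence nonzero off $\badset$), verify the $n-1$ tropical exponent vectors are linearly independent, and conclude by the standard leading-term argument. Two small technical points to align with the paper's setup: the quantum trace is defined on the \emph{stated} skein algebra, so the map actually factors as $\sk\cong\usk\xrightarrow{\incl}\ssk\xrightarrow{\qtr}\qtorus{\ttriang^L}\otimes\qtorus{\ttriang^R}$ (with Proposition~\ref{prop:spinstructures} supplying the first isomorphism); and the paper uses the componentwise partial order on degrees rather than a lexicographic one, which works because the computation produces a genuinely unique maximal term in that partial order---your lexicographic refinement would work too.
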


We work toward the proof of Proposition \ref{prop:second-main-result-independence}, which appears in Section \ref{sssec:annuluscomputationfinishing}. Let $\surf$ be an oriented finite-type surface, possibly with nonempty boundary $\partial\surf$.  Our application will be in the case when $\surf$ is the annulus.  

\begin{remark}
Much of the following theory should be adaptable to infinite-type surfaces  without too much effort.
\end{remark}

In Section \ref{ssec:stated-webs}, we define stated webs in any thickened surface.  In Section \ref{ssec:stated-skeins}, we define the stated skein algebra for any surface.  In Section \ref{ssec:punctured-surfaces}, we define punctured surfaces and explain how to assign a number to any skein in the thickened biangle, called the biangle quantum trace of the skein.  In Sections:  \ref{sssec:good-position-of-a-web-with-respect-to-a-split-ideal-triangulation}, we define ideally triangulated punctured surfaces; \ref{sssec:quantum-trace-map}, we preliminarily introduce the quantum trace map as the assignment of an element of a certain non-commutative algebra, coming from Fock--Goncharov theory and depending on the ideal triangulation, to each skein in a thickened ideally triangulated punctured surface; \ref{split-ideal-triangulations-good-position-and-definition-of-the-quantum-trace-map}, preparing to give a definition of the quantum trace, we describe the technical notion of a good position of a web with respect to the split ideal triangulation; \ref{sssec:definition-of-the-quantum-trace-map}, we define the quantum trace as a state sum with respect to the split ideal triangulation; \ref{ssec:fock-goncharov-quantum-torus}, we, after the fact, provide the necessary details about the Fock--Goncharov non-commutative algebra; \ref{sssec:quantum-traces-from-spectral-networks}, we digress to explain an open mathematical problem about quantum traces originating in the physical point of view.  In Sections:  \ref{sssec:annuluscomputationsetup}, preparing to prove Proposition \ref{prop:second-main-result-independence}, we provide the topological setup for the annulus calculation; \ref{sssec:annuluscomputationsomemoregeneralities}, we perform some preliminary calculations; \ref{sssec:annuluscomputationfinishing} we finish the proof of Proposition \ref{prop:second-main-result-independence}.  

\subsection{Stated webs}\label{ssec:stated-webs}

It will be useful to generalize the notion of web in $\surf\times\intt$ (Section \ref{ssec:webs} and Remark \ref{rem:websfor3manifolds}) as follows.  A \define{based boundary web}, also denoted $W$, is defined exactly as a web, except in addition we allow for oriented arcs ending on the boundary $(\partial\surf)\times\intt$.  In particular, $\partial W=W\cap((\partial\surf)\times\intt)$, and we allow $\partial W=\emptyset$.  The framing of such arcs at points in $W\cap((\partial\surf)\times\intt)$, which we think of as monovalent vertices of $W$, is required to be pointed in the positive $\intt$ direction.  Note that there is no cilia data attached to monovalent vertices.  An important other technical condition is that for individual components $\comp$ of $\partial\surf$, we require that the height values in $\intt$ of the points $W\cap(\comp\times\intt)$ in a boundary wall are distinct.  Boundary webs are considered up to isotopy through the family of boundary webs.  In particular, isotopies cannot remove points from a boundary, and cannot exchange the vertical heights of strands of $W$ ending on the same boundary wall $\comp\times\intt$.  Similarly, one defines the notion of an \define{unbased boundary web}, also denoted $\unbased{W}$.  

A \define{state} $s$ for a based boundary web $W$ is a function $s:W\cap((\partial\surf)\times\intt)\to\{1,2,\dots,n\}$.  A based boundary web equipped with a state is called a \define{stated based boundary web}.  We will often just write $W$ to indicate a stated based boundary web $(W,s)$, and will often confuse the word `state' with the value of a state at a given boundary point.  Stated based boundary webs are considered up to isotopy through the family of stated based boundary webs.  Similarly, one defines the notion of a \define{stated unbased boundary web}.  

Actually, in contrast to Sections \ref{sec:mainresult} and \ref{sec:spanning}, for technical reasons in this section we work primarily with stated unbased boundary webs $\unbased{W}$, which we will simply call \define{stated webs}.  

In order to represent a stated web, rather, its isotopy class, by a diagram, we need to indicate in the diagram the height data associated to points $\unbased{W}\cap(\comp\times\intt)$ of the web in each boundary wall.  This can be achieved by drawing a white arrow on the boundary component $\comp$, together with a starting location, such as in Figure \ref{fig:sikora-le-boundary-relations}, where the heights of the boundary points increase as one moves in the direction of the arrow.  As an example, in relation (B) of Figure \ref{fig:sikora-le-boundary-relations}, the point with state $j$ is higher than the point with state $i$, where in the figure we have assumed the starting location for the arrow on the boundary component $\comp$ is to the right.  Note that such diagrams always exist, by horizontally sliding via isotopy the boundary points of $\unbased{W}$ along a boundary wall.  Note also that such an arrow is only necessary when $\unbased{W}\cap(\comp\times\intt)$ consists of at least two points.    Soon, we will work only with punctured surfaces (Section \ref{ssec:punctured-surfaces}), where the starting location is determined by the arrow so is superfluous.  

\subsection{Stated skeins}\label{ssec:stated-skeins} 

Following \cite{LeArxiv21}, the \define{stated skein module} $\ssk(\surf)=\ssk(\surf\times\intt)$ is the vector space obtained by quotienting the vector space of formal finite linear combinations of isotopy classes of stated webs $\unbased{W}$ in $\surf\times\intt$ by the internal relations shown in Figure \ref{fig:sikora-le-relations}, together with the boundary relations shown in Figure \ref{fig:sikora-le-boundary-relations} as well as the boundary relations obtained from those in the figure by reversing all strand orientations. 

\begin{figure}[htb!]
\centering
\includegraphics[width=0.99\textwidth]{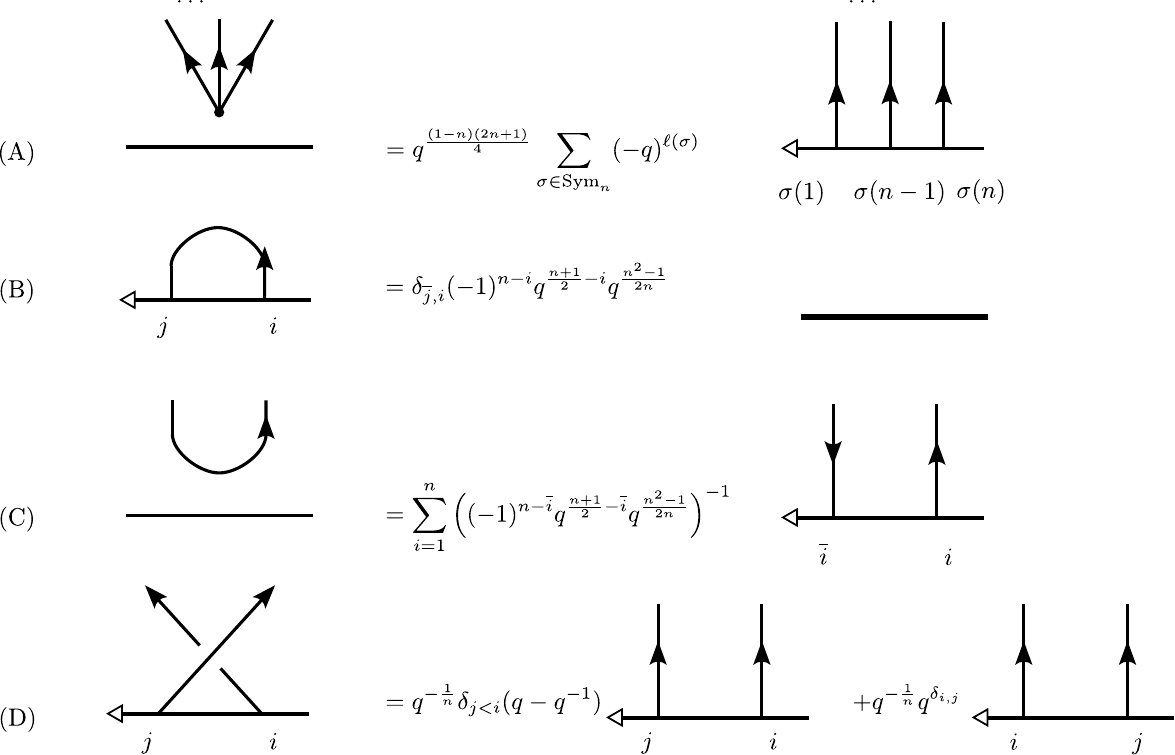}
\caption{Defining boundary relations for the stated skein algebra, including reversing all strand orientations.}
\label{fig:sikora-le-boundary-relations}
\end{figure}

In the figure: the states $i,j,\bar{i},\sigma(k)\in\{1,2,\dots,n\}$ are indicated on the boundary; we have $\bar{i}=n+1-i$; and, we have $\delta_{i,j}$ is $1$ if $i=j$ and $0$ else, and $\delta_{j<i}$ is $1$ if $j<i$ and $0$ else.  As before, note that the stated skein module $\ssk(\surf)$ actually depends on $\qroot$.  

Also as before, we sometimes use the notation $\class{\unbased{W}}$ to express the class in the skein module of a stated web $\unbased{W}$.  And, the skein module has an, in general non-commutative, algebra structure by stacking, using the same convention as earlier, so we refer to $\ssk(\surf)$ as the \define{stated skein algebra}.  

\begin{remark}
In \cite{LeArxiv21}, stated webs $\unbased{W}$ and the stated skein module $\ssk(\man)$ are defined for more general oriented three-dimensional manifolds with boundary $\man$ (Remark \ref{rem:surfaces}).
\end{remark}

\subsection{Punctured surfaces and the biangle quantum trace map}\label{ssec:punctured-surfaces}

The surface $\surf$ is \define{punctured} if it is obtained from a compact finite-type surface $\surf^\prime$ by removing a finite set $\punc$ from $\surf^\prime$.  That is, $\surf=\surf^\prime\setminus\punc$.  We require in addition that $\punc$ intersects every component of $\surf^\prime$ and every component of the boundary $\partial \surf^\prime$. Consequently, every boundary component $\comp$ of the punctured surface $\surf$ is an \define{ideal arc}, namely, homeomorphic to  an open interval.  In particular, in our diagrammatic notation for stated webs, we only need to provide just the arrow on the boundary component $\comp$, since the starting location is then determined in the obvious way, namely in the direction of the tail of the arrow.  

An \define{ideal monoangle} $\monoangle$, resp. \define{ideal biangle} $\biang$ or \define{ideal triangle} $\ttriang$ (or just \define{monoangle}, resp. \define{biangle} or \define{triangle}) is the punctured oriented surface obtained by removing one, resp. two or three, points from the boundary of a closed disk with the standard orientation.  

\begin{theorem}[\cite{costantinoMR4493620,LeArxiv21}]\label{thm:monoanglebiangletriangle}
The stated skein algebra $\ssk(\monoangle)$ of the monoangle is isomorphic as an algebra to  $\C$.  The stated skein algebra $\ssk(\biang)$ of the biangle is a Hopf algebra isomorphic to the quantized coordinate ring $\mcal{O}_q(\nsl)$.  The stated skein algebra $\ssk(\ttriang)$ of the triangle is isomorphic to the braided tensor product of $\mcal{O}_q(\nsl)$ with itself.  
\end{theorem}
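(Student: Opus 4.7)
The plan is to treat the three statements separately, as each builds on the previous.  For the monoangle $\monoangle$, the approach is to show that every stated web reduces to a scalar multiple of the empty web.  Since $\monoangle$ is a disk with one ideal boundary arc, any web interior to $\monoangle \times \intt$ is contractible, so by the internal relations of Figure \ref{fig:sikora-le-relations} every closed web resolves into loops that can be removed by the unknot removing relation.  Arcs with endpoints on the single boundary arc can be isotoped so that adjacent endpoints are in the position appearing on the left of the boundary relations in Figure \ref{fig:sikora-le-boundary-relations}, and then eliminated using those relations together with the cup/cap relation indexed by $\bar{i}$.  An induction on the number of boundary endpoints then shows that $\class{\unbased{W}} \in \C \cdot \class{\emptyset}$, and a single oriented arc with matched states $(i,\bar{i})$ on the two ends witnesses that the empty class is nonzero, giving $\ssk(\monoangle) \cong \C$.

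For the biangle $\biang$, the natural generators are the elements $u^i_j \in \ssk(\biang)$ represented by a single upward oriented strand from the bottom boundary arc to the top boundary arc, with states $i$ on the bottom and $j$ on the top.  The first task is to verify that the $u^i_j$ satisfy the defining relations of the quantized function algebra $\mcal{O}_q(\nsl)$, namely the $RTT = TTR$ relations together with the quantum determinant relation $\det_q(u^i_j) = 1$.  These both follow directly from Figure \ref{fig:sikora-le-boundary-relations}:  the crossing relation (A) encodes the $R$-matrix of the standard representation of $U_q(\nsl)$, and the antisymmetrizer relation (C) (together with the cap/cup relation of (B)) gives the quantum determinant.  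Surjectivity of the map $\mcal{O}_q(\nsl) \to \ssk(\biang)$ follows from the fact that any stated web in $\biang \times \intt$ can, using the internal vertex removing and crossing relations, be reduced to a linear combination of stated tangles and then, by the boundary relations, to monomials in the $u^i_j$.  The Hopf structure is geometric:  the coproduct $\Delta$ comes from vertically stacking two copies of $\biang$ and inserting a horizontal ideal arc across the middle to produce a composition of two biangles, the counit $\counit$ collapses the biangle to the monoangle, and the antipode $S$ reverses all orientations combined with the boundary duality $i \mapsto \bar{i}$.

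Injectivity, the hard part, is proved by exhibiting an explicit linear basis of $\ssk(\biang)$ indexed by the same combinatorial data as a PBW basis of $\mcal{O}_q(\nsl)$.  The technique, as in \cite{LeArxiv21}, is to bring every stated web into a normal form consisting of a family of parallel upward arcs, ordered by height and states, using the height exchange relation (A) and the boundary relations to straighten crossings and remove caps/cups.  One then shows that distinct normal forms are linearly independent either by constructing an evaluation against a representation of $\mcal{O}_q(\nsl)$ or by an explicit filtration argument based on the leading monomial in the quantum torus of an auxiliary ideal triangulation.  I expect this linear independence to be the main obstacle:  controlling the full set of relations that can reduce a nonstandard stated web to normal form requires a careful choice of ordering and a nontrivial diamond lemma style argument.

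For the triangle $\ttriang$, the strategy is to split $\ttriang$ along an ideal arc $\alpha$ into a pair of biangles $\biang_1$ and $\biang_2$.  By the gluing property of the stated skein algebra, established in \cite{LeArxiv21}, the splitting homomorphism $\ssk(\ttriang) \to \ssk(\biang_1) \otimes \ssk(\biang_2)$ is injective with image the subspace where the states on the two sides of $\alpha$ are summed over with appropriate pairing.  Since each $\ssk(\biang_k) \cong \mcal{O}_q(\nsl)$ as a Hopf algebra by the previous step, this identifies $\ssk(\ttriang)$ as a vector space with $\mcal{O}_q(\nsl) \otimes \mcal{O}_q(\nsl)$.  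The multiplication however picks up a braiding controlled by the $R$-matrix:  when one stacks a web from $\biang_1$ on top of a web from $\biang_2$, the strands near the shared vertex of $\ttriang$ cross and the corresponding crossing is evaluated by the boundary relation (A), exactly yielding the braided tensor product $\mcal{O}_q(\nsl) \tilde\otimes \mcal{O}_q(\nsl)$.  The routine verification that this assignment respects the algebra structure completes the argument.
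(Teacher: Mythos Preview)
The paper does not prove this theorem; it is quoted from \cite{costantinoMR4493620,LeArxiv21} and immediately followed by ``For terminology and details, see \cite{costantinoMR4493620,LeArxiv21}.''  The only part the paper comments on is the monoangle case, where it remarks in one sentence that the isomorphism $\ssk(\monoangle)\cong\C$ is obtained by using the skein relations to reduce any skein to a scalar multiple of $\class{\emptyset}$.  So there is no proof in the paper to compare your proposal against.

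That said, your sketch is broadly in line with the arguments in the cited references, with one geometric slip worth flagging.  In the triangle case you write that one splits $\ttriang$ along an ideal arc $\alpha$ into a pair of biangles $\biang_1$ and $\biang_2$.  A single interior ideal arc in a triangle joins two of the three ideal vertices, and cutting along it yields a biangle and another triangle, not two biangles.  The actual mechanism in \cite{costantinoMR4493620,LeArxiv21} is rather that arcs near two of the three corners of $\ttriang$ each generate a copy of $\mcal{O}_q(\nsl)$ inside $\ssk(\ttriang)$, and the product of an element from one corner with an element from the other forces a crossing governed by the $R$-matrix, which is exactly the braided tensor product structure; injectivity of the resulting map $\mcal{O}_q(\nsl)\,\tilde\otimes\,\mcal{O}_q(\nsl)\to\ssk(\ttriang)$ then uses a basis argument.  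Your intuition about where the braiding comes from is right, but the ``split into two biangles'' picture should be replaced by this two-corner description.
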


For terminology and details, see \cite{costantinoMR4493620,LeArxiv21}.  Let us now record the precise details from this theorem that we will require.  Note that the isomorphism $\ssk(\monoangle)\cong\C$ is obtained simply by applying the skein relations to write any skein $\sum_i a_i \class{\unbased{W}_i} = a \class{\emptyset}$, the number $a\in\C$ being the value of the skein under the isomorphism.  What will be useful for us later on is that since the stated skein algebra $\ssk(\biang)$ of the biangle is a Hopf algebra, in particular it comes equipped with a \define{co-unit} algebra homomorphism 
\begin{equation*}
\counit:\ssk(\biang)\to\ssk(\monoangle)\cong\C.
\end{equation*}  
Another name for the co-unit $\counit$ is the \define{biangle quantum trace map}.  

In Figure \ref{fig:counit-def}, we provide the diagrammatic definition of the co-unit $\counit(\class{\unbased{W}})$ evaluated on the class of a stated web $\unbased{W}$.  In the figure, on the right hand side, the upper puncture has been filled in, yielding a stated web in a monogon, which is a number, the value of the co-unit, as just discussed.  Note also that the height order is reversed on the right edge.  The definition for a general skein $\sum_i a_i \class{\unbased{W}_i}$ is by linear extension.  

\begin{figure}[htb!]
\centering
\includegraphics[width=\textwidth]{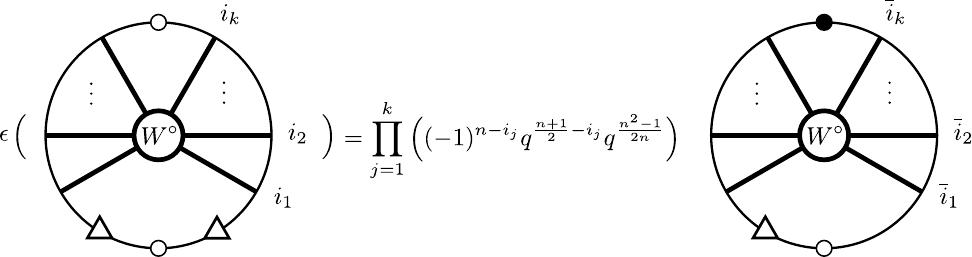}
\caption{Definition of the co-unit.}
\label{fig:counit-def}
\end{figure}

For $k\in\{0,1,\dots,n\}$ and $i_j\in\{1,2,\dots,n\}$ for $j=1,2,\dots,k$ and $l_{j^\prime}\in\{1,2,\dots,n\}$ for $j^\prime=1,2,\dots,n-k$, let $\source(k,(i_j)_j,(l_{j^\prime})_{j^\prime})$ and $\sink(k,(i_j)_j,(l_{j^\prime})_{j^\prime})$ be the \define{source} and \define{sink} stated webs in the biangle $\biang$ appearing in Figure \ref{fig:counit-example}.  The co-unit $\counit$ has the same values on these two stated webs, which we provide in the figure.  Here, if $\{i_1,i_2,\dots,i_k,\bar{l}_{n-k},\dots,\bar{l}_2,\bar{l}_1\}$ has less than $n$ elements then the co-unit is zero, else $\bmat i_1&i_2&\hdots&i_k&\bar{l}_{n-k}&\hdots&\bar{l}_{2}&\bar{l}_1\emat\in\sym_n$ is the permutation determined by the boundary states in this order.  

Later, we will need the following formula in Lemma \ref{lem:summingcounitoverthesymmetricgroup}.  Figure \ref{fig:triangulated-annulus} below might be helpful in keeping track of the indices.

\begin{lemma}\label{lem:summingcounitoverthesymmetricgroup}
For $k$, $(i_j)_j$, $(l_{j^\prime})_{j^\prime}$ as above, assume $i_1<i_2<\dots<i_k$ and $l_{n-k}>\dots>l_2>l_1$ and $i_j\neq \bar{l}_{j^\prime}$ for all $j,j^\prime$.  Then
\begin{gather*}
\sum_{\sigma^\prime\in\sym_{n-k},\sigma\in\sym_{k}}
\counit(\sink(n-k,(l_{\sigma^\prime(j^\prime)})_{j^\prime},(i_{\sigma(j)})_j))\counit(\source(k,(i_{\sigma(j)})_j,(l_{\sigma^\prime(j^\prime)})_{j^\prime}))
\\=q^{(n-k)(n-k-1)/2}[n-k]!q^{k(k-1)/2}[k]!
\counit(\sink(n-k,(l_{j^\prime})_{j^\prime},(i_{j})_j))\counit(\source(k,(i_{j})_j,(l_{j^\prime})_{j^\prime})).
\end{gather*}
\end{lemma}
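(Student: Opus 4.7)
The approach is to extract from Figure \ref{fig:counit-example} the explicit formula for $\counit$ on source and sink webs, reducing the identity to the classical $q$-factorial identity. That formula expresses each counit value, when nonzero, as a fixed scalar times a monomial $q^{c\,\ell(\rho)}$, where $\rho \in \sym_n$ is the permutation read off from the cyclic sequence of boundary states and $c$ is an integer depending on the web type. The hypothesis $i_j \neq \bar{l}_{j'}$ guarantees that $\{i_1,\dots,i_k\}$ and $\{\bar{l}_1,\dots,\bar{l}_{n-k}\}$ are disjoint, hence together exhaust $\{1,2,\dots,n\}$, so every summand on the left-hand side is nonzero.

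The combinatorial heart of the argument is controlling how $\ell(\rho)$ varies as $(i_j)$ and $(l_{j'})$ are permuted by $\sigma$ and $\sigma'$. Under the ordering hypotheses $i_1 < \dots < i_k$ and $l_{n-k} > \dots > l_1$ (whence $\bar{l}_{n-k} < \dots < \bar{l}_1$), the unpermuted permutation $\rho_{e,e}$ associated to the boundary sequence $i_1,\dots,i_k,\bar{l}_{n-k},\dots,\bar{l}_1$ is increasing on positions $\{1,\dots,k\}$ and on positions $\{k+1,\dots,n\}$, which is exactly the defining condition for $\rho_{e,e}$ to be the minimum-length representative of its coset in $\sym_n / (\sym_k \times \sym_{n-k})$. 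Permuting the $i$'s by $\sigma$ and the $l$'s by $\sigma'$ right-multiplies $\rho_{e,e}$ by a block element $\sigma \oplus \sigma'' \in \sym_k \times \sym_{n-k}$, with $\sigma''$ the conjugate of $\sigma'$ by the longest element of $\sym_{n-k}$ (accounting for the reverse indexing of the $\bar{l}$'s). The standard Coxeter-theoretic fact for Grassmannian representatives yields $\ell(\rho_{e,e} \cdot (\sigma \oplus \sigma'')) = \ell(\rho_{e,e}) + \ell(\sigma) + \ell(\sigma'')$, and conjugation by the longest element preserves length, so $\ell(\sigma'') = \ell(\sigma')$. Thus $\ell(\rho_{\sigma,\sigma'}) = \ell(\rho_{e,e}) + \ell(\sigma) + \ell(\sigma')$, and the analogous additivity holds for the corresponding sink permutation.

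Substituting, the product of counit values in each summand equals $q^{c_{\mrm{tot}}(\ell(\sigma) + \ell(\sigma'))}$ times the base product $\counit(\sink(n-k,(l_{j'})_{j'},(i_j)_j))\,\counit(\source(k,(i_j)_j,(l_{j'})_{j'}))$, where $c_{\mrm{tot}}$ is the sum of the source and sink per-length exponents read from the figure. The sum therefore factors as
\begin{equation*}
\Big( \sum_{\sigma \in \sym_k} q^{c_{\mrm{tot}}\,\ell(\sigma)} \Big) \Big( \sum_{\sigma' \in \sym_{n-k}} q^{c_{\mrm{tot}}\,\ell(\sigma')} \Big)
\end{equation*}
times this base product. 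For $c_{\mrm{tot}} = 2$, the Poincar\'e polynomial identity $\sum_{\sigma \in \sym_m} x^{\ell(\sigma)} = \prod_{i=1}^m (x^i - 1)/(x - 1)$ at $x = q^2$, combined with the elementary identity $\prod_{i=1}^m (q^{2i} - 1)/(q^2 - 1) = q^{m(m-1)/2}[m]!$ (a direct manipulation from $[m] = (q^m - q^{-m})/(q - q^{-1})$), evaluates the two inner sums to $q^{k(k-1)/2}[k]!$ and $q^{(n-k)(n-k-1)/2}[n-k]!$ respectively, yielding the stated formula.

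The main obstacle is the exponent bookkeeping verification $c_{\mrm{tot}} = 2$ from Figure \ref{fig:counit-example}: one must check that the per-length exponents of the source and sink counit formulas sum to $2$. This is a direct inspection of the figure rather than a further skein calculation; once confirmed, the remainder of the argument is a direct application of Coxeter-theoretic length additivity and the classical $q$-factorial identity.
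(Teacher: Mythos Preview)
Your proof is correct and follows essentially the same approach as the paper's own proof: both extract the counit values from Figure~\ref{fig:counit-example}, use the length-additivity $\ell(\rho_{\sigma,\sigma'}) = \ell(\rho_{e,e}) + \ell(\sigma) + \ell(\sigma')$ coming from the minimal-coset-representative property, and finish with the identity $\sum_{\sigma\in\sym_m}(-q)^{2\ell(\sigma)} = q^{m(m-1)/2}[m]!$. Your version spells out the Coxeter-theoretic justification (Grassmannian representatives, conjugation by the longest element to handle the reversed indexing of the $\bar{l}$'s) more carefully than the paper, which simply asserts the minimality and the resulting factor $(-q)^{2\ell(\sigma')}(-q)^{2\ell(\sigma)}$; but the underlying argument is the same.
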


\begin{proof}
Note that $\bar{l}_{n-k}<\dots<\bar{l}_2<\bar{l}_1$.  In particular, $(i_1,i_2,\dots,i_k,\bar{l}_{n-k},\dots,\bar{l}_2,\bar{l}_1)\in\sym_n$ has minimal length among the $(i_{\sigma(1)},i_{\sigma(2)},\dots,i_{\sigma(k)},\bar{l}_{\sigma^\prime(n-k)},\dots,\bar{l}_{\sigma^\prime(2)},\bar{l}_{\sigma^\prime(1)})$.  Similarly for $(l_1,l_2,\dots,l_{n-k},\bar{i}_k,\dots,\bar{i}_2,\bar{i}_1)$.  

\begin{figure}[htb!]
\centering
\includegraphics[width=.833\textwidth]{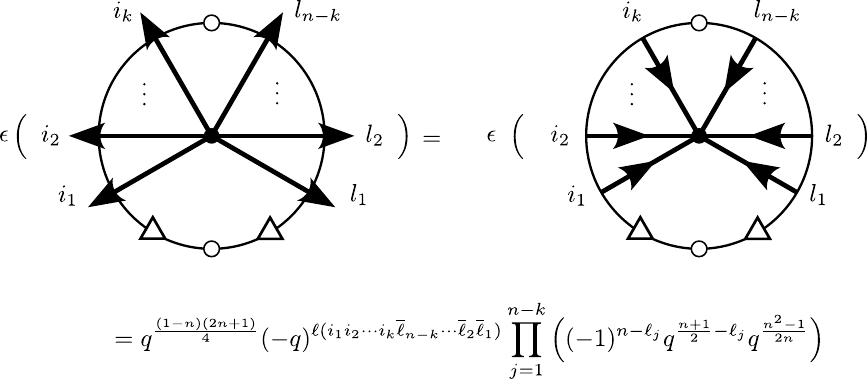}
\caption{Co-unit for source and sink webs in the biangle.}
\label{fig:counit-example}
\end{figure}

By the formula for the co-unit of the sink and source webs in Figure \ref{fig:counit-example}, the left hand side of the equation in the statement is equal to
\begin{equation*}
\sum_{\sigma^\prime\in\sym_{n-k},\sigma\in\sym_{k}}
(-q)^{2\ell(\sigma^\prime)}(-q)^{2\ell(\sigma)}
\counit(\sink(n-k,(l_{j^\prime})_{j^\prime},(i_{j})_j))\counit(\source(k,(i_{j})_j,(l_{j^\prime})_{j^\prime}))
\end{equation*}
which is equal to the right hand side by the general formula 
$\sum_{\sigma\in\sym_k}(-q)^{2\ell(\sigma)}=q^{k(k-1)/2}[k]!$; see, for example, \cite[Equation (21)]{SikoraAlgGeomTop05}.
\end{proof}

\subsection{Ideally triangulated punctured surfaces and the quantum trace map}\label{ssec:ideally-triangulable-punctured-surfaces-and-the-quantum-trace-map}

From now on, $\surf=\surf^\prime-\punc$ will be a punctured surface.  

\subsubsection{Ideal triangulations}\label{sssec:good-position-of-a-web-with-respect-to-a-split-ideal-triangulation}

An \define{ideal triangulation} $\idealtriang$ of the punctured surface $\surf$ is a cell decomposition of $\surf^\prime$ such that the $0$-cells consist of the puncture set $\punc$, the $1$-cells are ideal arcs, and the $2$-cells are ideal triangles. It is well-known \cite{bonahonMR2851072} that if $\surf$ has $d\geq0$ boundary components, then an ideal triangulation $\idealtriang$ exists if and only if the Euler characteristic of $\surf$ is strictly less than $d/2$. Note that the only punctured surfaces excluded in this way are the monoangle, the biangle, and the sphere with one or two punctures.  For simplicity, assume ideal triangulations do not contain any self-folded triangles, in which case the once punctured monoangle should also be excluded.  

The $1$-cells, namely the ideal arcs, often called ideal edges, of an ideal triangulation $\idealtriang$ are partitioned into the internal edges and the boundary edges, the latter which are exactly the boundary components $\comp$ of $\surf$.  The $2$-cells are called either triangles or faces.  

\subsubsection{Quantum trace map:  introduction}\label{sssec:quantum-trace-map}

Let $\surf$ be an ideally triangulated surface with ideal triangulation $\idealtriang$.  Fix once and for all an arbitrary $(2n)$-root $\wroot^{1/2}$ of $\qroot$.  So $(\wroot^{1/2})^{2n}=\qroot$. Arbitrarily identify each ideal triangle of $\idealtriang$ with the ideal triangle $\ttriang$, then choose an arbitrary order of the triangles $\ttriang$ of $\idealtriang$.  

In Section \ref{ssec:fock-goncharov-quantum-torus} below, we will associate a non-commutative algebra $\qtorus{\ttriang}$, called the \define{$n$-root Fock--Goncharov quantum torus} (or just \define{quantum torus}), to the ideal triangle $\ttriang$, rather, to $\ttriang$ plus the choice of a preferred ideal vertex of this triangle, whose choice is only for bookkeeping purposes and which we will likely not mention again.  

Recall that if $\mcal{A}$ and $\mcal{B}$ are algebras, then their tensor product $\mcal{A}\otimes\mcal{B}$ over $\C$ is also an algebra, where the product of $a\otimes b$ with $a^\prime\otimes b^\prime$ is $(aa^\prime)\otimes(bb^\prime)$.  Let $\bigotimes_{\ttriang\in\idealtriang}\qtorus{\ttriang}$ be the tensor product algebra of the quantum tori $\qtorus{\ttriang}$ associated to the triangles $\ttriang$, in the given order.  Note that, since the algebras $\mcal{A}\otimes\mcal{B}$ and $\mcal{B}\otimes\mcal{A}$ are naturally isomorphic in the obvious way, this order is essentially immaterial.   

\begin{theorem}[\cite{bonahonMR2851072} for $n=2$; \cite{douglasMR4717274, kim2022rm} for $n=3$; \cite{le2023quantum} for $n=n$]
For an ideally triangulated surface $\surf$ with ideal triangulation $\idealtriang$, and a $(2n)$-root $\wroot^{1/2}$ of $\qroot$, there exists a unique algebra homomorphism
\begin{equation*}
\qtr:\ssk(\surf)\to\bigotimes_{\ttriang\in\idealtriang}\qtorus{\ttriang}
\end{equation*}
from the stated skein algebra of the surface to the tensor product, over the triangles of $\idealtriang$, of the $n$-root Fock--Goncharov quantum tori satisfying certain enjoyable properties.  
\end{theorem}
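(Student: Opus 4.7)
The plan is to construct $\qtr$ as a local-to-global state sum with respect to a split ideal triangulation $\splitidealtriang$, obtained from $\idealtriang$ by thickening each internal edge into a biangle. First, given a stated web $\unbased{W}\subset\surf\times\intt$, I would isotope it into a \emph{good position} with respect to $\splitidealtriang$: inside each biangle the web consists of a braid of oriented strands with no vertices, and inside each triangle all vertices lie in the interior and the strands meet the boundary walls transversely, away from the punctures and at pairwise distinct heights on each wall. Cutting $\unbased{W}$ along the biangle edges then produces a collection of stated webs, one inside each biangle and one inside each triangle, where the newly created endpoints carry state labels in $\{1,2,\dots,n\}$ that will be summed over.

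Next, to each biangle piece I would assign a scalar via the biangle co-unit $\counit\colon\ssk(\biang)\to\C$ of Theorem \ref{thm:monoanglebiangletriangle}, packaging the $\mcal{O}_q(\nsl)$ data. To each triangle piece I would associate an element of the Fock--Goncharov quantum torus $\qtorus{\ttriang}$ by further decomposing the piece into elementary sub-webs supported in collars of the three boundary edges (the $j$-th edge, left-turn, and right-turn webs for $j=1,\dots,n-1$) and assigning the matrices $\elemedgemat{j}$, $\elemleftmat{j}$, $\elemrightmat{j}$ with entries in $\qtorus{\ttriang}$, multiplied in Weyl-ordered fashion along each strand and contracted against the boundary states. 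Finally, I would take the tensor product over all triangles and sum over all intermediate biangle states; extending linearly gives the putative map $\qtr$.

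The steps of verification are then: (i) independence of the output from the choice of good position, which reduces to a finite list of local moves across the split edges, handled cleanly by the Hopf-algebra structure $\ssk(\biang)\cong\mcal{O}_q(\nsl)$ combined with the $R$-matrix compatibility of the elementary matrices; (ii) preservation of the internal relations of Figure \ref{fig:sikora-le-relations} and boundary relations of Figure \ref{fig:sikora-le-boundary-relations}, checked locally in a single triangle by explicit $\mcal{O}_q(\nsl)$ identities; and (iii) multiplicativity under stacking, which follows because the stacking convention for webs is compatible with the product in each $\qtorus{\ttriang}$ under Weyl ordering. Uniqueness is then enforced by specifying the images of the generators $\elemedgemat{j}, \elemleftmat{j}, \elemrightmat{j}$, which span the collection of webs in good position.

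The hard part is step (i), the good-position invariance: it amounts to a quantum Reidemeister calculus for stated webs cut by the split triangulation. For $n=2$ Bonahon--Wong verified this by direct diagrammatic check; for $n=3$ it was carried out by Douglas and by Kim via a more delicate case analysis; and for general $n$ the approach of L\^e--Sikora leverages the Hopf-algebra structure of the biangle to turn the infinite family of local checks into the single statement that the biangle co-unit intertwines the co-multiplication with the stacking product, so that the local-to-global assembly across $\splitidealtriang$ is well-defined. Once step (i) is established, steps (ii) and (iii) reduce to short local computations inside a single triangle or biangle.
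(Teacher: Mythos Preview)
This theorem is cited from external references and is not proved in the paper; the paper only supplies the \emph{definition} of $\qtr$ via the state sum formula in Sections \ref{split-ideal-triangulations-good-position-and-definition-of-the-quantum-trace-map}--\ref{ssec:fock-goncharov-quantum-torus}. Your overall architecture (split triangulation, good position, biangle co-unit, triangle quantum-torus contribution, state sum over compatible states) matches that definition, but your description of good position is inverted relative to the paper's.

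Specifically, you write that in good position the biangle pieces are braids of strands with no vertices while the triangle pieces carry the vertices. The paper does the opposite: the restriction of $\unbased{W}$ to each \emph{triangle} is required to be a disjoint union of flat oriented arcs $\unbased{A}_1,\dots,\unbased{A}_k$ at distinct constant heights, each connecting two different edges, with no vertices whatsoever; all the $n$-valent vertices, crossings, and general web complexity are pushed into the \emph{biangles}, where they are evaluated to scalars by the co-unit $\counit$ (this is precisely why the source and sink webs of Figure \ref{fig:counit-example} live in biangles). Consequently, the triangle contribution is not built by decomposing a complicated web into elementary $\elemedgemat{j},\elemleftmat{j},\elemrightmat{j}$ pieces as you suggest; rather, those elementary matrices are multiplied once and for all to form the quantum monodromy matrix $M(\unbased{A})=\qleftmat$ or $\qrightmat$, and the contribution of a triangle is simply the ordered product of matrix \emph{entries} $M(\unbased{A}_1,s_1)\cdots M(\unbased{A}_k,s_k)$ over the arcs, with the height order dictating the multiplication order in $\qtorus{\ttriang}$. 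Your step (ii) would then also change: the internal skein relations are absorbed by the biangle co-unit rather than being checked inside a triangle.
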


This is the \define{quantum trace map}.  See Section \ref{sssec:definition-of-the-quantum-trace-map} below for the definition.  In fact, the image of $\qtr$ lies in a nice sub-algebra of the tensor product, but we will not need this.

One such enjoyable property is that when the $(2n^2)$-root $\wroot^{1/2}$ of $q$ equals $1$, then the quantum trace map expresses the trace of the monodromy of a framed $\mrm{PSL}_n(\C)$ local system on $\surf$ as a Laurent polynomial in $n$-roots of the Thurston--Fock--Goncharov $\mcal{X}$-coordinates; see \cite{bonahonMR2851072, douglasMR4717274}.  Another property is that the quantum trace map is natural, in a suitable sense, with respect to the choice of ideal triangulation $\idealtriang$.  

When the surface $\surf$ has empty boundary, $\partial\surf=\emptyset$, it has been proved that the quantum trace map is injective for $n=2$ \cite{bonahonMR2851072} and $n=3$ \cite{kim2022rm}.  The proof uses the fact that explicit linear bases of $\sk$ are known in these cases.  It is expected to be true for $n=n$ as well.  As an application of our main theorem, which provides such a basis when $\surf=\ann$ is the annulus, we establish this injectivity property of the $\nsl$ quantum trace map for the annulus, in Section \ref{ssec:injectivity} below.  

\subsubsection{Good position of a web with respect to a split ideal triangulation}\label{split-ideal-triangulations-good-position-and-definition-of-the-quantum-trace-map}

The \define{split ideal triangulation} $\splitidealtriang$ associated to an ideal triangulation $\idealtriang$ is the cell complex obtained by blowing up each edge of $\idealtriang$ to a biangle $\biang$, including the boundary edges.  In total, the number of $1$-cells doubles, and there is a new $2$-cell introduced for each prior edge.  Each triangle $\ttriang$ then faces three biangles, each internal biangle faces two triangles, and each external biangle, namely those coming from boundary edges, faces one triangle.  Note that there is a natural bijection between the triangles of $\idealtriang$ and those of $\splitidealtriang$.  

We now describe what we mean by the \define{good position} of a stated web $\unbased{W}$ in $\surf\times\intt$ with respect to a split ideal triangulation $\splitidealtriang$ of $\surf$.  Note this is a purely topological notion, and the states themselves will not play any role.  The web $\unbased{W}$ is in good position if it satisfies the following properties:  (1)  the restriction of $\unbased{W}$ to $\biang\times\intt$ or $\ttriang\times\intt$ for each thickened face, either a biangle $\biang$ or a triangle $\ttriang$, of $\splitidealtriang$ is a boundary web in the sense of Section \ref{ssec:stated-webs};  (2)  the restriction of $\unbased{W}$ to $\ttriang\times\intt$ for each triangle face is a disjoint union of oriented arcs $\unbased{A}_1, \unbased{A}_2, \dots, \unbased{A}_k$, each at a constant height with a constant vertical framing and connecting two different edges of the triangle.  Note by (1) that, over each triangle, the arcs are at different heights, and their framings point in the positive vertical direction.  We may assume the indexing is such that the arc $\unbased{A}_i$ lies above $\unbased{A}_{i+1}$.  

\begin{remark}
Note that, in the case $n=3$, this notion of good position for a web with respect to a split ideal triangulation $\splitidealtriang$ is different than that used in \cite{douglasMR4685684, douglas2022tropical}.  
\end{remark}

Mimicking the proof in \cite{bonahonMR2851072}, good positions always exist, and two good positions can be related by a sequence of Reidemeister-like moves; see also \cite{douglasMR4717274, kim2022rm}. 

Let $\unbased{W}$ be a stated web in $\surf\times\intt$, and $\alpha$ a nonboundary ideal arc in $\surf$, namely $\alpha$ connects two punctures.  We allow $\alpha$ to be isotopic to a boundary edge, but not intersecting.  Assume the points of $W\cap(\alpha\times\intt)$, if any, are at distinct heights and with blackboard framing.  Then we can cut along the arc $\alpha$ to obtain a new punctured surface $\surf_\alpha$ with two new boundary edges $\alpha_1$ and $\alpha_2$, as well as a new boundary web $\unbased{W}_\alpha$ in $\surf_\alpha\times\intt$, which is not naturally stated.  A state $s_\alpha$ for $\unbased{W}_\alpha$ is called \define{compatible} if its values $s_{\alpha}(p_1)=s_{\alpha}(p_2)$ agree on pairs of endpoints $p_1\in\alpha_1\times\intt$ and $p_2\in\alpha_2\times\intt$ lying over the cut arc that were originally attached before the cutting, and otherwise $s_\alpha$ agrees with the state $s$ of $\unbased{W}$.  

\subsubsection{Quantum trace map:  definition, part 1 of 2}\label{sssec:definition-of-the-quantum-trace-map}

As a first case, suppose $\surf=\ttriang$ is the ideal triangle, and $\unbased{W}=\unbased{A}$ is a flat oriented stated arc between two edges of the triangle, with the blackboard framing.  Let $s=(i,j)$ be the state of $\unbased{A}$, where $i$ (resp. $j$) in $\{1,2,\dots,n\}$ is associated to the tail (resp. tip) of the arc.  In Section \ref{ssec:fock-goncharov-quantum-torus} below, we will assign an element $M(\unbased{A},s)=M(\unbased{A})_{ij}\in\qtorus{\ttriang}$ for each state $s$.  This is the $ij$-entry of the \define{Fock--Goncharov quantum local monodromy matrix} $M(\unbased{A})$ associated to the arc $\unbased{A}$, namely the entry in the $i$-th row and $j$-th column.  

For a general surface $\surf$, let $\unbased{W}$ be a stated web in good position with respect to the split ideal triangulation $\splitidealtriang$.  For each triangle $\ttriang$ of $\splitidealtriang$, let $\unbased{A}_1, \unbased{A}_2, \dots, \unbased{A}_k$ be the corresponding arcs, as in Section \ref{split-ideal-triangulations-good-position-and-definition-of-the-quantum-trace-map}, ordered from higher to lower.  For each biangle $\biang$ of $\splitidealtriang$, let $\unbased{E}$ denote the restriction of $\unbased{W}$ to the thickened biangle.  The $\unbased{A}_i$ and $\unbased{E}$ are boundary webs, but are not necessarily stated.  As in the end of Section \ref{split-ideal-triangulations-good-position-and-definition-of-the-quantum-trace-map}, for a given choice of states, we can talk about whether the stated webs $(\unbased{A}_i,s_i)$ and $(\unbased{E},s)$ are compatible, varying over all triangles and biangles.  The quantum trace map is then defined by the \define{state sum formula}
\begin{equation*}
\qtr(\unbased{W})=\sum_{\text{compatible states}}\lp\prod_{\biang\in\splitidealtriang}\counit(\unbased{E},s)\rp
\lp\bigotimes_{\ttriang\in\splitidealtriang}M(\unbased{A}_1,s_1)\cdots M(\unbased{A}_k,s_k)\rp
\in\bigotimes_{\ttriang\in\idealtriang}\qtorus{\ttriang}.
\end{equation*}
Here, $\counit$ is the biangle quantum trace map defined in Section \ref{ssec:punctured-surfaces}.  Note that since the quantum tori $\qtorus{\ttriang}$ are non-commutative algebras, the order of multiplication of the $M(\unbased{A}_i,s_i)$ of the arc terms in the above formula is important.  This order is controlled by the heights of the arcs lying over the triangles.  

\subsubsection{Quantum trace map:  definition, part 2 of 2}\label{ssec:fock-goncharov-quantum-torus}

We now, after the fact, define the $n$-root Fock--Goncharov quantum torus $\qtorus{\ttriang}$ associated to the ideal triangle $\ttriang$, as well as the Fock--Goncharov quantum local monodromy matrices $M(\unbased{A})=(M(\unbased{A})_{ij})$ associated to oriented corner arcs $\unbased{A}$ that were used in the construction of the quantum trace map.  In particular, $M(\unbased{A})$ is an $n\times n$ matrix with coefficients in the quantum torus $\qtorus{\ttriang}$.  This is a condensed version of \cite{douglasMR4717274} (or \cite{douglas2021points}).  

The \define{discrete triangle} is $\discrete=\{(a,b,c)\in\{0,1,2,\dots,n\}^3;a+b+c=n\}$.  We think of the discrete triangle overlayed on top of the ideal triangle $\ttriang$ such that the top (resp. bottom left or bottom right) vertex has coordinate $(0,n,0)$ (resp. $(n,0,0)$ or $(0,0,n)$).  See Figures \ref{fig:quiver} and \ref{fig:standard-arcs}.  A vertex $(a,b,c)$ of $\discrete$ is \define{interior} (resp. \define{corner} or \define{boundary}) if $a,b,c>0$ (resp. one of $a,b,c$ is $n$, or the vertex is not interior or corner).  Put $\cornerlessdiscrete=\discrete\setminus\{\text{corner vertices}\}$.  Define an antisymmetric function $P:\cornerlessdiscrete\times\cornerlessdiscrete\to\{-2,-1,0,1,+2\}$ by setting:  
\begin{equation*}
P(abc,a(b+1)(c-1))=P(abc,(a-1)b(c+1))=P(abc,(a+1)(b-1)c)=+2,
\end{equation*}
\begin{equation*}
P(abc,a(b-1)(c+1))=P(abc,(a+1)b(c-1))=P(abc,(a-1)(b+1)c)=-2
\end{equation*}
for $(a,b,c)$ interior, while for boundary vertices
\begin{equation*}
P(j(n-j)0,(j+1)(n-j-1)0)=P((n-j)0j,(n-j-1)0(j+1))=P(0j(n-j),0(j+1)(n-j-1))=+1
\end{equation*}
and, $0$ elsewhere.  Here, antisymmetric means $P(abc,a'b'c')=-P(a'b'c',abc)$.  Equivalently, one can read the function $P$ from the \define{quiver} with vertex set $\cornerlessdiscrete$ displayed in Figure \ref{fig:quiver}, where the solid (resp. dotted) arrows carry weight $+2$ (resp. $+1$).  Choose an arbitrary order of the $N=n(n+1)/2-3$ vertices of the quiver.  Then we can think of $P=(P_{ij})$ as an $N\times N$ antisymmetric matrix.

The quantum torus $\qtorus{\ttriang}$ is defined as the quotient of the free algebra in $2N$ indeterminates $X_1^{1/n},X_1^{-1/n},\dots,X_N^{1/n},X_N^{-1/n}$ by the relations 
\begin{equation*}
X_i^{m_i/n} X_j^{m_j/n}=\wroot^{m_i m_j P_{ij}} X_j^{m_j/n} X_i^{m_i/n},\,\, X_i^{m/n}X_i^{-m/n}=X_i^{-m/n}X_i^{m/n}=1 \,\, \lp m_i,m_j,m\in\Z\rp.
\end{equation*}
Here, $X_i^{m/n}$ is interpreted in the obvious way and we put $X_i=(X_i^{1/n})^n$.

Note that when $\wroot=1$, then $\cltorus{\ttriang}\cong\C[X_1^{\pm1/n},X_2^{\pm1/n},\dots,X_N^{\pm1/n}]$ is just the commutative Laurent polynomial algebra in the formal $n$-root variables $X_i^{1/n}$.  There is a linear, but not algebra, isomorphism $[-]:\cltorus{\ttriang}\to\qtorus{\ttriang}$, called the \define{Weyl quantum ordering}, defined on monomials by 
\begin{equation*}[X_1^{m_1/n}X_2^{m_2/n}\cdots X_N^{m_N/n}]=\wroot^{(-1/2)\sum_{1\leq i<j\leq N}m_i m_j P_{ij}}X_1^{m_1/n}X_2^{m_2/n}\cdots X_N^{m_N/n}.
\end{equation*}
Note that here is where the choice of the square root $\wroot^{1/2}$ sneaks into the construction.  

For $j=1,\dots,n-1$ and symbols $Z$ and $X$, define $n\times n$ diagonal and block diagonal~matrices 
\begin{align*}\elemedgemat{j}(Z)&=Z^{-j/n}\mrm{diag}\bmat Z&Z&\hdots&Z&1&1&\hdots&1\emat, \\
\elemleftmat{j}(X)&=X^{-(j-1)/n}\mrm{diag}\bmat X&\hdots&X&\bmat1&1\\0&1\emat&1&\hdots&1\emat,  \\ 
\elemrightmat{j}(X)&=X^{+(j-1)/n}\mrm{diag}\bmat1&\hdots&1&\bmat1&0\\1&1\emat&\inv{X}&\hdots&\inv{X}\emat
\end{align*}
 where $Z$ appears $j$ times and $X$ appears $j-1$ times, whenever such matrices make sense.  Note that $\elemleftmat{1}(X)=\elemleftmat{1}$ and $\elemrightmat{1}(X)=\elemrightmat{1}$ do not actually involve $X$.  

We will use the following notational conventions for products of matrices:  $\prod_{i=L}^N M_i=M_L M_{L+1}\cdots M_N$ and $\coprod_{i=N}^L M_i=M_N M_{N-1}\cdots M_L$ when $L\leq N$.  

Continuing, for symbols $Z_j$ and $X_{abc}$ for $j=1,2,\dots,n-1$ and $(a,b,c)$ interior vertices of $\discrete$, define matrices 
\begin{align*}
\edgemat(Z_1,\dots,Z_{n-1})&=\prod_{j=1}^{n-1}\elemedgemat{j}(Z_j),\\\leftmat((X_{abc})_{abc})&=\coprod_{i=n-1}^1\lp\elemleftmat{1}\prod_{j=2}^i\elemleftmat{j}(X_{(j-1)(n-i)(i-j+1)})\rp, \\
\rightmat((X_{abc})_{abc})&=\coprod_{i=n-1}^1\lp\elemrightmat{1}\prod_{j=2}^i\elemrightmat{j}(X_{(i-j+1)(n-i)(j-1)})\rp.
\end{align*}
Now, in the commutative polynomial algebra $\cltorus{\ttriang}$, put $Z_j=X_{j0(n-j)}$ and $Z^\prime_j=X_{j(n-j)0}$ and $Z^{\prime\prime}_j=X_{0j(n-j)}$ for $j=1,2,\dots,n-1$.  See Figure \ref{fig:standard-arcs}.  Define the \define{standard quantum left and right matrix} $\qleftmat$ and $\qrightmat$ by their entries
\begin{align*}
\qleftmat_{i,j}&=\left[\edgemat(Z_1,\dots,Z_{n-1})\leftmat((X_{abc})_{abc})\edgemat(Z^\prime_1,\dots,Z^\prime_{n-1})\right]_{\bar{i},\bar{j}},\\
\qrightmat_{i,j}&=\left[\edgemat(Z_1,\dots,Z_{n-1})\rightmat((X_{abc})_{abc})\edgemat(Z^{\prime\prime}_1,\dots,Z^{\prime\prime}_{n-1})\right]_{\bar{i},\bar{j}}.
\end{align*}
These are $n\times n$ matrices with coefficients in the quantum torus $\qtorus{\ttriang}$.  Note that, here, $\bar{i}=n+1-i$, and the Weyl quantum ordering of a matrix is defined in the obvious way by $[M]_{ij}=[M_{ij}]$.  So $\qleftmat$ is lower triangular and $\qrightmat$ is upper triangular.  

In Figure \ref{fig:standard-arcs}, we have displayed the \define{standard left and right arcs} $\unbased{A}_L$ and $\unbased{A}_R$.  Finally, define the desired quantum local monodromy matrices for the standard arcs by $M(\unbased{A}_L)=\qleftmat$ and $M(\unbased{A}_R)=\qrightmat$.  The matrices $M(\unbased{A})$ for the arcs $\unbased{A}$ on the other corners of the ideal triangle $\ttriang$ are defined by the triangular symmetry.  

\begin{figure}[htb!]
\centering
\includegraphics[scale=1.1]{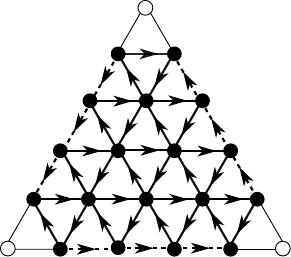}
\caption{Fock--Goncharov quiver.}
\label{fig:quiver}
\end{figure}

\begin{figure}[htb!]
\centering
\includegraphics[scale=.65]{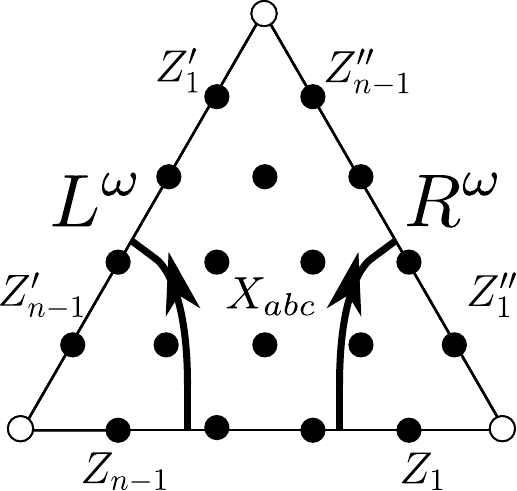}
\caption{Quantum left and right matrices.}
\label{fig:standard-arcs}
\end{figure}

Combined with Section \ref{sssec:definition-of-the-quantum-trace-map}, this completes the definition of the quantum trace map.  

\begin{remark}
The definition of the quantum trace map discussed in this and the previous sub-sub-section reproduces, essentially on the nose, the $n=n$ construction of \cite{le2023quantum}; see also \cite{douglas2021points, douglasMR4717274}.
\end{remark}

\subsubsection{Digression:  an open problem originating in the physical point of view}\label{sssec:quantum-traces-from-spectral-networks}

There is also a different construction of quantum traces from spectral networks \cite{gabellaMR3613514, neitzkeMR4190271, neitzkeMR4482975}, motivated by the physical problem of counting `framed BPS states'.  At least for simple oriented loops $\unbased{L}$, this different construction appears, experimentally, to agree with the construction of $\qtr(\unbased{L})$ given above.  For $n=2$, such an agreement has been put on a more solid mathematical footing \cite{kimMR4568006}.  

The physical interpretation of quantum traces leads to a precise mathematical prediction, essentially in \cite{gabellaMR3613514, neitzkeMR4190271, neitzkeMR4482975} and based on ideas in \cite{gaiottoMR3250763}, concerning their structure, and hints that even for simple closed curves $\unbased{L}$ the quantum trace $\qtr(\unbased{L})$ contains significantly more information than the classical trace.  The precise statement is that if we write
\begin{equation*}
\qtr(\unbased{L})=\sum_{\{i_1(\ttriang),\dots,i_N(\ttriang)\in\Z;\ttriang\in\idealtriang\}}
c(i_1(\ttriang),\dots,i_N(\ttriang);\ttriang\in\idealtriang)
\bigotimes_{\ttriang\in\idealtriang}\left[X_1^{i_1(\ttriang)/n}\cdots X_N^{i_N(\ttriang)/n}\right]
\end{equation*}
in $\bigotimes_{\ttriang\in\idealtriang}\qtorus{\ttriang}$, then the complex coefficients $c(i_1(\ttriang),i_2(\ttriang),\dots,i_N(\ttriang);\ttriang\in\idealtriang)$ with respect to the Weyl ordered monomials are sums of quantum positive integers
\begin{equation*}
c(i_1(\ttriang),i_2(\ttriang),\dots,i_N(\ttriang);\ttriang\in\idealtriang)
=\sum_{k>0} m_k[k]
\end{equation*}
where $[k]=(q^k-q^{-k})/(q-\inv{q})$ and $m_k\in\nnegZ$ is the multiplicity.  This is still an open problem, even for $n=2$.  Physically, these multiplicities are interpreted as counting the framed BPS states in quantum dimension $[k]$.  Some evidence is presented in \cite{neitzkeMR4482975}, including experiments performed together with the second author of the current paper.  

Note that if the curve $\unbased{L}$ satisfies the property that it passes through each triangle $\ttriang\in\idealtriang$ at most once, then it follows readily from the state sum formula that all the non-zero coefficients $c(i_1(\ttriang),i_2(\ttriang),\dots,i_N(\ttriang);\ttriang\in\idealtriang)$ are equal to $1*[1]=1$.  It is the `self interaction' that occurs when the curve passes through triangles more than once that allows for the creation of quantum coefficients $[k]$ for $k\geq2$.  Note also that these coefficients are global in nature, as they are the remnant of rewriting, for every triangle, the product over arcs of Weyl ordered terms as a scalar times a single Weyl ordered term, followed by collecting these local scalars into a single global scalar for the triangulation and gathering like monomials as a sum.  

As one final note, this prediction is true for simple loops in the annulus, with ideal triangulation as in Section \ref{sssec:annuluscomputationsetup} below, since a loop passes through each triangle exactly once.  Indeed, the only two simple loops are the clockwise and counterclockwise loop.  The recent paper \cite{kim2024quantized} could be helpful in studying this problem for general surfaces.  We emphasize that, at least a priori, this problem depends heavily on the choice of ideal triangulation.  

\begin{remark}
The prediction discussed in this sub-sub-section is related to the so-called `framed no-exotics conjecture' from the physical viewpoint \cite{gabellaMR3613514, gaiottoMR3250763, neitzkeMR4190271}.  Closely related mathematical conjectures appear in the context of quantum cluster varieties, see \cite{davisonMR3739230} and \cite[Conjecture 1.4]{davisonMR4337972}, where the analogous predicted property of the relevant coefficients is called `Lefschetz type'.  It is not entirely clear the degree to which these results and conjectures from quantum cluster geometry explain the above predicted property of quantum traces of loops.  For instance, there is a somewhat subtle distinction between the Fock--Goncharov cluster $\mcal{X}$ variables $X_{i}$ and their $n$-roots $X_i^{1/n}$; see \cite{kim2022rm, Kim_2024} for a detailed discussion of this distinction.  In particular, the above prediction applies to all simple loops, whose quantum traces are only guaranteed to be Laurent polynomials in the $X_i^{1/n}$, rather than in the $X_i$; contrast with the `quantum theta bases' of \cite{allegrettiMR3581328, mandel2023bracelets}.  
\end{remark}

\subsection{Annulus computation}\label{ssec:annulus-computation}

\subsubsection{Annulus:  setup}\label{sssec:annuluscomputationsetup}

In Sections \ref{sec:setting}-\ref{sec:spanning}, we worked in the open annulus $\ann$.  Since we now want to work with ideally triangulated punctured surfaces $\surf$, we redefine $\surf=\ann$ to be the punctured surface obtained as the closed annulus minus two points, one on each boundary component.  Certainly, this change has no effect on the skein algebras $\sk$ and $\usk$ studied in the above referenced sections.  The advantage is that in this new setting we can  incorporate the stated skein algebra $\ssk=\ssk(\ann)$ as well.  Note that inclusion induces a natural map $\incl:\usk\to\ssk$.  For $k=1,2,\dots,n-1$ recall that $\unbased{B}_k$ is the unbased web underlying the $k$-th irreducible based basis web $B_k$ appearing in Theorem \ref{thm:maintheorem}.  Put $\tilde{B}_k=\incl(\unbased{B}_k)$.  

In Figure \ref{fig:triangulated-annulus}, we show a good position of $\tilde{B}_k$ with respect to the split ideal triangulation $\splitidealtriang$ corresponding to the ideal triangulation $\idealtriang$ of the annulus $\ann$ with two biangles $\biang^L, \biang^R$ and two triangles $\ttriang^L, \ttriang^R$.  So what we usually think of as the counterclockwise direction corresponds to moving left in the rectangle in Figure \ref{fig:triangulated-annulus}.  

\begin{figure}[htb!]
\centering
\includegraphics[width=0.7\textwidth]{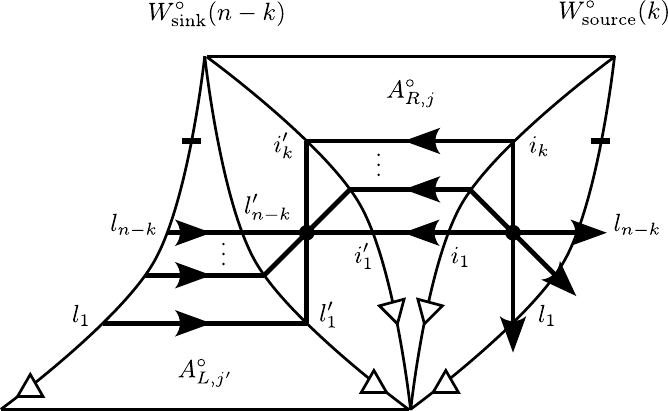}
\caption{Good position of the $k$-th irreducible basis web with respect to the split ideal triangulation of the annulus.}
\label{fig:triangulated-annulus}
\end{figure}

Note that the good position is such that the arcs $\unbased{A}_{L,1},\unbased{A}_{L,2}, \dots, \unbased{A}_{L,n-k}$ (resp. the arcs $\unbased{A}_{R,k}, \dots, \unbased{A}_{R,2}, \unbased{A}_{R,1}$) in the left triangle decrease (resp. increase) in height as they move toward the tip of the triangle that they straddle.  All these oriented arcs are left turning.  Note also that in the biangle on the left is the sink web $\sink(n-k)$ (Section \ref{ssec:punctured-surfaces}) and on the right is the source web $\source(k)$.  The calculations will be in terms of the compatibly stated restricted webs $\unbased{A}_{L,j^\prime}(l_{j^\prime},l^\prime_{j^\prime})$, $\sink(n-k,(l^\prime_{j^\prime})_{j^\prime},(i^\prime_j)_j)$, $\unbased{A}_{R,j}(i_j,i^\prime_j)$, $\source(k,(i_j)_j,(l_{j^\prime})_{j^\prime})$ in the restrictions $\ttriang^L\times\intt$, $\biang^L\times\intt$, $\ttriang^R\times\intt$, $\biang^R\times\intt$ respectively for $j=1,2,\dots,k$ and $j^\prime=1,2,\dots,n-k$ and where the states are as shown in Figure \ref{fig:triangulated-annulus}.  

Let the bottom right (resp. top left) vertex of the left (resp. right) triangle correspond to the top vertex of the discrete triangle $\discrete$ (Figure \ref{fig:standard-arcs}).  Let $X^{L(1/n)}_v=X^{L(1/n)}_{abc}$ (resp. $X^{R(1/n)}_v=X^{R(1/n)}_{abc}$) for $v=(a,b,c)\in\cornerlessdiscrete$ denote the generators in the quantum torus $\qtorus{\ttriang^L}$ (resp. $\qtorus{\ttriang^R}$) associated to the left (resp. right) triangle.  Our goal is to compute, at least in part, the quantum trace 

\begin{gather*}\label{eq:qtracecalculation}\tag{$\dagger$}
\qtr(\tilde{B}_k)=\sum_{(i_j)_j,(l_{j^\prime})_{j^\prime},(i^\prime_j)_j,(l^\prime_{j^\prime})_{j^\prime}}
\counit(\sink(n-k,(l^\prime_{j^\prime})_{j^\prime},(i^\prime_j)_j))\counit(\source(k,(i_j)_j,(l_{j^\prime})_{j^\prime}))\ast
\\\notag
\ast M(\unbased{A}_{L,1})(l_{1}l^\prime_{1})\cdots M(\unbased{A}_{L,n-k})(l_{n-k}l^\prime_{n-k})
\otimes M(\unbased{A}_{R,1})(i_1 i^\prime_1)\cdots M(\unbased{A}_{R,k})(i_k i^\prime_k) 
\end{gather*}
in $\qtorus{\ttriang^L}\otimes\qtorus{\ttriang^R}$ for each $k=1,2,\dots,n-1$.  

\subsubsection{Annulus:  a piecewise linear computation}\label{sssec:annuluscomputationsomemoregeneralities}

In general, for the quantum torus $\qtorus{\ttriang}$, we can rewrite any \define{monomial} $M=aX_{v_1}^{m_1/n}\cdots X_{v_k}^{m_k/n}=a\wroot^{d/2}[\prod_{v\in\cornerlessdiscrete}X_v^{d_v/n}]$ in $\qtorus{\ttriang}$ for $a\in\sC$ and $v_i\in\cornerlessdiscrete$ and $m_i,d,d_v\in\Z$ as a non-zero scalar times the Weyl ordered classical monomial in $\cltorus{\ttriang}$.  The \define{degree} $\degree(M)$ of this monomial is the function $v\mapsto \degree(v)=d_v\in\Z$ for $v\in\cornerlessdiscrete$.  Note that $\degree(M)$ is indeed well-defined on the class of $M$ in $\qtorus{\ttriang}$.  There is a partial order on degrees defined by $\degree(M)\geq\degree(M^\prime)$ if $\degree(M)(v)\geq\degree(M^\prime)(v)$ for all $v\in\cornerlessdiscrete$.  A monomial $M$ appearing as a summand of an element $P$ of $\qtorus{\ttriang}$ is a \define{highest degree monomial} of $P$ if $\degree(M)>\degree(M^\prime)$ for all other monomials $M^\prime$ appearing as summands of the Laurent polynomial $P$.  

\begin{lemma}\label{lem:maincalculationallemma}
Let $\qleftmat$ and $\qrightmat$ be the standard quantum left and right matrices.  In general, an entry $\qleftmat_{ij}$ or $\qrightmat_{ij}$ is a sum of monomials in $\qtorus{\ttriang}$.  Put $M=\qleftmat$, which recall is lower triangular.  Note that similar facts hold for the right matrix as well, which we will not use.
\begin{enumerate}
\item  $M_{ii}$ is a monomial.
\item  $M_{ii}$ commutes with $M_{i^\prime i^\prime}$.  
\item  $\degree(M_{ii})>\degree(M_{(i-1)(i-1)})$, and $\degree{M}_{ii}$ is strictly greater than any of the degrees of the monomials appearing as summands in $M_{ij}$ for $j<i$. 
\item  For all $(a,b,c)\in\cornerlessdiscrete$, $\degree(M_{ii})(a,b,c)=\begin{cases}n-a,&n+1-i\leq a,1\leq a\leq n-1;\\-a,&n+1-i>a,1\leq a\leq n-1;\\0,&a=0.\end{cases}$  
\item  For $k=1,2,\dots,n-1$, define functions $f_k:\R\to\R$ and $g_k:\R\to\R$ by $f_k(x)=kx$ and $g_k(x)=n(n-k)-(n-k)x$.  Note that $f_k(0)=g_k(n)=0$ and $f_k(n-k)=g_k(n-k)$.  Then for all $(a,b,c)\in\cornerlessdiscrete$
\begin{align*}
\degree\lp\prod_{i=n+1-k}^n M_{ii}\rp(a,b,c)&=\min\{f_k(n-a),g_k(n-a)\}\in\nnegZ,\\
\degree\lp\prod_{i=k+1}^n M_{ii}\rp(a,b,c)&=\min\{f_k(a),g_k(a)\}\in\nnegZ.
\end{align*}
\end{enumerate}
\end{lemma}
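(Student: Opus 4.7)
The plan is to derive all five statements from the explicit formulas for $\qleftmat$ given in Section~4.3.5. Because each $\elemedgemat{j}(Z_j)$ is diagonal and each $\elemleftmat{\ell}(X)$ is upper triangular (with a single nontrivial block $\left(\begin{smallmatrix}1&1\\0&1\end{smallmatrix}\right)$), the product $\edgemat\cdot\leftmat\cdot\edgemat$ is upper triangular, and the index reversal $\bar{i}=n+1-i$ makes $\qleftmat$ lower triangular as claimed. The diagonal entry $\qleftmat_{ii}$ is then the Weyl quantum ordering of the product, in order, of the $(\bar{i},\bar{i})$-diagonals of the individual factors, each of which is itself a single monomial in the associated variable; this immediately gives property~(1). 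For property~(4), I would read off the contribution of each factor to the exponent $d_v$ for a given $v=(a,b,c)\in\cornerlessdiscrete$: using the explicit form of $\elemedgemat{j}$ and $\elemleftmat{\ell}$, each factor that involves $X_v$ contributes $n-a$ to $d_v$ when $\bar{i}$ falls in the ``upper block'' of that factor (equivalently $i+a\geq n+1$) and $-a$ otherwise, while vertices with $a=0$ appear in no factor of either $\edgemat$ or $\leftmat$; summing these contributions yields the asserted piecewise formula.

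Property~(5) is then a direct count from~(4): among $i\in\{n+1-k,\ldots,n\}$, exactly $\min\{a,k\}$ values satisfy $i+a\geq n+1$, so the sum telescopes to $\min\{a,k\}\cdot n-ak=\min\{k(n-a),a(n-k)\}=\min\{f_k(n-a),g_k(n-a)\}$; the identical count over $i\in\{k+1,\ldots,n\}$ gives the second identity. The first inequality in~(3) is also immediate from~(4): passing from $i-1$ to $i$ adds to the set $R_i:=\{v\in\cornerlessdiscrete: i+a\geq n+1\}$ exactly the vertices with $a=n+1-i$, and at each such vertex the exponent strictly jumps from $-a$ to $n-a$, an increase of $n$, while no other exponent changes.

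The remaining parts---the off-diagonal half of~(3) and property~(2)---are where the main work lies. For~(3), every summand of $\qleftmat_{ij}$ with $j<i$ arises from a path through the matrix product that deviates from the diagonal by using, in at least one factor $\elemleftmat{\ell}$, the off-diagonal entry in the $(\ell,\ell+1)$ slot of the $2\times 2$ block; since the $v_k$-exponent of a diagonal entry of $\elemleftmat{\ell}(X_{v_k})$ drops across the threshold $\bar{i}=\ell$ (from $(n-\ell+1)/n$ to $-(\ell-1)/n$), and since the off-diagonal step forces the path's row index to strictly rise past $\bar{i}$ at the earliest occurrence of such a threshold, such paths produce monomials with strictly smaller exponent in at least one variable compared with $\qleftmat_{ii}$. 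For~(2), the key observation from~(4) is that $\degree(\qleftmat_{ii})(v)$ depends on $v=(a,b,c)$ only through its first coordinate $a$; writing this as $D^{(i)}(a)$, the commutation obstruction reduces to $\sum_{a,a'}D^{(i)}(a)D^{(i')}(a')Q(a,a')$ where $Q(a,a'):=\sum_{v,v' \text{ in slices } a,a'}P(v,v')$. One checks that $Q(a,a')=0$ in all cases: for $a=a'$ by the antisymmetry of $P$ on the symmetric summation domain; for $|a-a'|\geq 2$ because the quiver provides no arrows between non-adjacent slices; and for $|a-a'|=1$ by a vertex-by-vertex argument showing that each interior vertex contributes net zero $P$-flow to the adjacent slice (since its outgoing $+2$ and $-2$ arrows to that slice cancel), while the two boundary vertices on the slice contribute equal and opposite flows that also cancel. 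This establishes the commutativity~(2).
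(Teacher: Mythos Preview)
Your argument is essentially correct and, for parts (1), (3) first half, (4), and (5), tracks the paper's proof closely. The main point of departure is part (2): the paper simply cites \cite{douglas2021points} for the commutation, whereas you give a direct, self-contained combinatorial argument by showing that the ``slice-summed'' quiver form $Q(a,a')=\sum_{a(v)=a,\,a(v')=a'}P(v,v')$ vanishes identically. This is a genuine gain in self-containedness; your case analysis (antisymmetry for $a=a'$, no arrows for $|a-a'|\ge 2$, and the $+2/-2$ cancellation at each interior vertex together with the $+1/-1$ boundary cancellation for $|a-a'|=1$) is correct as stated.

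One place to tighten: in the off-diagonal half of (3) your phrase ``at the earliest occurrence of such a threshold'' is doing a lot of work. The cleanest way to secure the \emph{strict} inequality is to note that any path contributing to $\qleftmat_{ij}$ with $j<i$ must arrive at the trailing diagonal factor $\edgemat(Z'_1,\dots,Z'_{n-1})$ at row $\bar{j}>\bar{i}$; hence in the factor $\elemedgemat{\bar{i}}(X_{\bar{i},\,n-\bar{i},\,0})$ the path picks up exponent $-\bar{i}/n$, whereas the diagonal path for $\qleftmat_{ii}$ sits at row $\bar{i}$ and picks up $(n-\bar{i})/n$. This gives a specific coordinate where the degree strictly drops, regardless of where in $\leftmat$ the jump(s) occurred. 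The paper's own one-line justification (``the off diagonal terms have exponent $d_{vij}<0$'') is equally terse on this point, so your version is at least as complete; making the $\edgemat'$ observation explicit would remove any ambiguity.
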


\begin{proof}
(1)  By definition, $M$ is the product of lower triangular, and diagonal, matrices $M_v$, $v\in\cornerlessdiscrete$, each of whose $ij$-entry is of the form $X_v^{d_{vij}/n}$ for some $d_{vij}\in\Z$, as well as lower triangular matrices involving no variables.  Rather, $M$ is the Weyl ordering of such a product, but the Weyl ordering will play no role throughout this proof since we are only interested in degrees, so we will omit it for notational simplicity.  

(2)  This is one of the relations appearing in the main result of \cite{douglas2021points}. 

(3)  The first statement is immediate from the form of the matrices $M_v$.  The second statement is because the off diagonal terms $X_v^{d_{vij}/n}$ of the $M_v$ have exponent $d_{vij}<0$.

(4)  Clearly $M$ does not involve the $X_{0j(n-j)}$, so assume $a\geq1$.  Recall that the $(\bar{m},\bar{m})$ entry of $M$ is the $(m,m)$ entry of the product of matrices $\elemedgemat{l}(X_{l0(n-l)})$ and $\elemedgemat{l}(X_{l(n-l)0})$ for $1\leq l\leq n-1$ and $\elemleftmat{j}(X_{(j-1)(n-i)(i-j+1)})$ for $2\leq j\leq i\leq n-1$, and $\elemleftmat{1}$ a number of times.  We only need to observe then that, in the former, $X_{l0(n-l)}$ and $X_{l(n-l)0}$ appear with power $1-\ell/n$ (resp. $-\ell/n$) when $m\leq l$ (resp. $m>l$), and, in the latter, $X_{(j-1)(n-i)(i-j+1)}$ appears with power $1-(j-1)/n$ (resp. $-(j-1)/n$) when $m\leq j-1$ (resp. $m>j-1$).  

(5)  This is part of the mathematical content of \cite[Equation (172) and Theorem 10.2]{le2023quantum}; for an earlier appearance of these piecewise linear formulas, see \cite[Theorem 8.22]{SunGeomFunctAnal20}.  For the sake of self-containedness, we provide this short calculation here as well.   The formulas work when $a=0$, so assume $a\geq1$.  Note that the larger degree $n-a$ for $M_{ii}$ occurs if and only if $n+1-a\leq i$, else it has the smaller degree $-a$.  

We begin with the first equality, summing the degrees of the $M_{ii}$ for $n+1-k\leq i\leq n$.  Note that $f_k(n-a)\leq g_k(n-a)$ if and only if $n-a\leq n-k$.  There are two cases.  If $k\leq a$, then $n+1-a\leq n+1-k\leq i$, so the sum is $n-(n+1-k)+1$ terms of weight $n-a$, which is $k(n-a)=f_k(n-a)$ as desired.  If $k>a$, then the sum is $(n+1-a)-(n+1-k)$ terms of weight $-a$ plus $n-(n+1-a)+1$ terms of weight $n-a$, which is $(k-a)(-a)+a(n-a)=(n-k)a=g_k(n-a)$ as desired.  

The second equality sums the degrees of $M_{ii}$ for $k+1\leq i\leq n$.  Note that $f_k(a)\leq g_k(a)$ if and only if $a\leq n-k$.  There are two cases.  If $a>n-k$, then $n+1-a<k+1\leq i$, so the sum is $n-(k+1)+1$ terms of weight $n-a$, which is $(n-k)(n-a)=g_k(a)$ as desired.  If $a\leq n-k$, then the sum is $(n+1-a)-(k+1)$ terms of weight $-a$ plus $n-(n+1-a)+1$ terms of weight $n-a$, which is $(n-a-k)(-a)+a(n-a)=ka=f_k(a)$ as desired.  
\end{proof}

\begin{remark}
It has been known by experts for some time now, see for example the Acknowledgements and the comment about \cite{SunGeomFunctAnal20} in the proof of item (5) above as well as the upcoming work \cite{leshensunweng}, that a certain set of equivalence classes of $n$-valent graphs in a punctured surface $\surf$ equipped with an ideal triangulation $\idealtriang$ is in natural one-to-one correspondence with a certain subset of $\nnegZ^N$ where $N$ depends only on $n$ and the topology of $\surf$; see also \cite{xie2013higherlaminationswebsn2}.  Here, the graph is called a `reduced web' and the corresponding $N$-tuple constitutes its `tropical web coordinates'.  Such a result generalizes to the case $n=n$ the work \cite{FockArxiv97, fockMR2233852} in the case $n=2$ and the work \cite{douglasMR4685684, douglas2022tropical, Frohman22MathZ} in the case $n=3$.  

When the surface $\surf=\ann$ is the annulus, the reduced webs represent the same skein basis webs being studied in the present paper.  Although, technically, we do not explicitly use this $n=n$ result about tropical web coordinates, it essentially underlies our entire strategy.  This is because the $N$-tuple of tropical web coordinates is precisely the $N$-tuple of exponents of the highest term of the quantum trace polynomial of the basis web, which we compute in the current section.  See \cite{bonahonMR2851072, kim2022rm} for the corresponding quantum trace calculations for $n=2$ and $n=3$ for any surface $\surf$.  

Lastly, building on what was learned from the $n=2$ and $n=3$ cases, we owe the majority of our understanding of Fock--Goncharov duality in the $n=n$ case, discussed in Section \ref{sec:FGduality}, to conversations with Zhe Sun; see the Acknowledgements.
\end{remark}

\subsubsection{Annulus:  finishing}\label{sssec:annuluscomputationfinishing}
Returning from generalities to the problem at hand, for $k=1,2,\dots,n-1$ define the \define{$k$-th irreducible tropical coordinate functions} $t^L_k:\cornerlessdiscrete\to\nnegZ$ and $t^R_k:\cornerlessdiscrete\to\nnegZ$ by $t^L_k(a,b,c)=\min\{f_k(a),g_k(a)\}$ and $t^R_k(a,b,c)=\min\{f_k(n-a),g_k(n-a)\}$ where the functions $f_k:\R\to\R$ and $g_k:\R\to\R$ are defined in Part (5) of Lemma \ref{lem:maincalculationallemma}.  Recall that here we are thinking of $\cornerlessdiscrete$ overlayed on top of the triangles $\ttriang^L$ and $\ttriang^R$ of the split ideal triangulation $\splitidealtriang$ of the annulus $\ann$ as prescribed above in Section \ref{sssec:annuluscomputationsetup}.  Note that $t^L_k(a,b,c)=t^R_k(a,b,c)=0$ for $a=0$.  Although we will not use them explicitly in this sub-sub-section, note that we have the following symmetries:  $t^L_k(n-a,b,c)=t^R_k(a,b,c)$ and $t^L_{n-k}(a,b,c)=t^R_k(a,b,c)$.  We will return to these symmetries in Section \ref{sec:FGduality} below.  

Write the quantum trace \eqref{eq:qtracecalculation} for $\tilde{B}_k$ in the form $\qtr(\tilde{B}_k)=\sum_i a_i M^L_i\otimes M^R_i$ for monomials $M^L_i$ and $M^R_i$ in $\qtorus{\ttriang^L}$ and $\qtorus{\ttriang^R}$ respectively, collecting terms so that no two summands have the same pair of monomials.  Let $\badset\subset\sC$ be defined as in Theorem {\upshape\ref{thm:maintheorem}}.  We claim there is a single $i=h$, independent of $\qroot\in\sC\setminus\badset$, such that $M^L_h\otimes M^R_h$ is a highest degree term, in the sense that $\degree(M_h^L)>\degree(M_i^L)$ and $\degree(M_h^R)>\degree(M_i^R)$ for all $i$.  More precisely, we claim that $\degree(M_h^L)=t_k^L$ and $\degree(M_h^R)=t_k^R$ as functions $\cornerlessdiscrete\to\nnegZ$.  

Labeling states as in Figure \ref{fig:triangulated-annulus}, by Part (3) of Lemma \ref{lem:maincalculationallemma}, together with the formula in Figure \ref{fig:counit-example}, we see that the highest degree pair $M_h^L\otimes M_h^R$ of monomials occurs for the states $l_{j^\prime}=l^\prime_{j^\prime}=k+j^\prime$ for $j^\prime=1,2,\dots,n-k$ and $i_j=i^\prime_j=n+j-k$ for $j=1,2,\dots,k$, and their permutations:  so, more precisely, for the states $l_{\sigma^\prime(j^\prime)}=l^\prime_{\sigma^\prime(j^\prime)}=k+j^\prime$ for $j^\prime=1,2,\dots,n-k$ and $i_{\sigma(j)}=i^\prime_{\sigma(j)}=n+j-k$ for $j=1,2,\dots,k$, varying over $\sigma^\prime\in\sym_{n-k}$ and $\sigma\in\sym_k$.  Indeed, this will be true so long as the coefficient resulting from collecting terms is non-zero.  By Lemma \ref{lem:summingcounitoverthesymmetricgroup} and Part (2) of Lemma \ref{lem:maincalculationallemma}, for $(i_j)_j$, $(l_{j^\prime})_{j^\prime}$ as above, and putting $M=\qleftmat$ to be the quantum left matrix, we compute this highest term in $\qtorus{\ttriang^L}\otimes\qtorus{\ttriang^R}$ of \eqref{eq:qtracecalculation} as
\begin{gather*}
\sum_{\sigma^\prime\in\sym_{n-k},\sigma\in\sym_{k}}
\counit(\sink(n-k,(l_{\sigma^\prime(j^\prime)})_{j^\prime},(i_{\sigma(j)})_j))\counit(\source(k,(i_{\sigma(j)})_j,(l_{\sigma^\prime(j^\prime)})_{j^\prime}))\ast
\\
\ast M(\unbased{A}_{L,1})(l_{\sigma^\prime(1)}l_{\sigma^\prime(1)})\cdots M(\unbased{A}_{L,n-k})(l_{\sigma^\prime(n-k)}l_{\sigma^\prime(n-k)})
\otimes M(\unbased{A}_{R,1})(i_{\sigma(1)}i_{\sigma(1)})\cdots M(\unbased{A}_{R,k})(i_{\sigma(k)}i_{\sigma(k)}) 
\\=q^{(n-k)(n-k-1)/2}[n-k]!q^{k(k-1)/2}[k]!
\counit(\sink(n-k,(l_{j^\prime})_{j^\prime},(i_{j})_j))\counit(\source(k,(i_{j})_j,(l_{j^\prime})_{j^\prime}))\ast
\\\ast M_{(k+1)(k+1)}\cdots M_{(n-1)(n-1)}M_{nn}\otimes M_{(n+1-k)(n+1-k)}\cdots M_{(n-1)(n-1)}M_{nn}
\end{gather*}
which is non-zero for $\qroot\in\sC\setminus\badset$, as desired.  And we also see, by Part (5) of Lemma \ref{lem:maincalculationallemma}, that $\degree(M_h^L)=t_k^L$ and $\degree(M_h^R)=t_k^R$ as claimed.  

\begin{lemma}\label{lem:tropicalindependence}
Thought of as elements of the vector space $\R^{\cornerlessdiscrete}$, the $n-1$ functions $t^R_k$ are linearly independent.  Similarly for the functions $t^L_k$.  
\end{lemma}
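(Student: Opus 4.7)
The plan is a short discrete second-difference calculation. Since $t^R_k(a,b,c) = \min\{k(n-a),(n-k)a\}$ depends only on the coordinate $a$, I would first pick, for each $a \in \{0, 1, \dots, n-1\}$, a vertex $v_a \in \cornerlessdiscrete$ with first coordinate $a$ (such a vertex exists for every such $a$ by direct inspection of $\cornerlessdiscrete$), and reduce the linear independence of the $t^R_k$ on $\cornerlessdiscrete$ to that of the $n-1$ vectors $(t^R_k(v_a))_{a=0}^{n-1}$ in $\R^n$.

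Next I would observe that, for each $k \in \{1,\dots,n-1\}$, the function $a \mapsto t^R_k(a)$ is a discrete piecewise linear tent: it vanishes at $a = 0$, is linear with slope $n-k$ on $\{0,\dots,k\}$, peaks at $a = k$ with height $k(n-k)$, and is linear with slope $-k$ on $\{k,\dots,n-1\}$. For the edge case $k = n-1$ the kink sits at the boundary $a = n-1$, so the restriction of $t^R_{n-1}$ to $\{0,\dots,n-1\}$ is simply the identity $a \mapsto a$. A brief arithmetic check then shows that the discrete second difference $\Delta^2 t^R_k(a) := t^R_k(a-1) - 2 t^R_k(a) + t^R_k(a+1)$ satisfies $\Delta^2 t^R_k(a) = -n\,\delta_{a,k}$ for all $a,k \in \{1,\dots,n-2\}$, and that $\Delta^2 t^R_{n-1}(a) = 0$ for $a \in \{1,\dots,n-2\}$ because $t^R_{n-1}$ is linear on that range.

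Given a putative linear relation $\sum_{k=1}^{n-1} c_k t^R_k \equiv 0$ on $\cornerlessdiscrete$, I would apply $\Delta^2$ in the $a$-variable and evaluate at each $a = j \in \{1,\dots,n-2\}$, which yields $-n\,c_j = 0$ and so $c_1 = \cdots = c_{n-2} = 0$. The remaining equation $c_{n-1}\, t^R_{n-1} \equiv 0$ then forces $c_{n-1} = 0$ upon evaluation at any $v_a$ with $a \geq 1$. The argument for the $t^L_k$ is identical, since by the relation $t^L_k(a,b,c) = \min\{ka,(n-k)(n-a)\}$ these functions are also tents in $a$, now peaking at $a = n-k$, and vanishing at $a = 0$. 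I do not anticipate any real obstacle; the only point that needs a little care is that the discrete $a$-range $\{0,\dots,n-1\}$ omits $a = n$, which is why I restrict the second-difference calculation to $a \in \{1,\dots,n-2\}$ and handle the coefficient $c_{n-1}$ separately.
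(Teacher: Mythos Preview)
Your proof is correct and, at bottom, the same linear-algebraic observation as the paper's, just packaged differently. The paper restricts to the values $t^R_k(n-1),\dots,t^R_k(1)$, arranges them as the columns of an $(n-1)\times(n-1)$ matrix $T^R$, and simply writes down the explicit inverse, which is $1/n$ times the tridiagonal $(-1,2,-1)$ matrix (on the anti-diagonal). Your second-difference operator $\Delta^2$ is exactly this inverse matrix applied row by row: the identity $\Delta^2 t^R_k(a)=-n\,\delta_{a,k}$ is precisely the statement that the product of the tridiagonal matrix with $T^R$ is $-n$ times the identity. The paper's version is terser and implicitly uses the boundary value $t^R_k(n)=0$ to fill in the last row of the inverse, whereas you avoid $a=n$ (not represented in $\cornerlessdiscrete$) by restricting $\Delta^2$ to $a\in\{1,\dots,n-2\}$ and treating $c_{n-1}$ separately; this is a perfectly reasonable and arguably cleaner way to handle the boundary. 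Your remark that the $t^L_k$ case is symmetric is also consistent with the paper, which elsewhere notes $t^L_{n-k}=t^R_k$.
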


\begin{proof}
Abusing notation and putting $t^R_k:\R\to\R$ defined by $t^R_k(a)=\min\{f_k(n-a),g_k(n-a)\}$, it suffices to show that the $n-1$ vectors $(t^R_k(n-1),\dots,t^R_k(2),t^R_k(1))$ in $\R^{n-1}$ varying over $k=1,2,\dots,n-1$ are linearly independent.  Indeed, if we arrange these $n-1$ vectors as the columns of a $(n-1)\times(n-1)$ matrix $T^R$, $T^R_{ij}=t^R_j(n-i)$, then this matrix is the inverse of the matrix $\inv{\lp T^R\rp}=(1/n)\bmat&&&&0&0&-1&2\\&&&&0&-1&2&-1\\&&&&-1&2&-1&0\\\vdots&\vdots&\vdots&\reflectbox{$\ddots$}&&&&\\-1&2&-1&\hdots&&&&\\2&-1&0&\hdots&&&&\emat$.  \end{proof}
\begin{remark}
These  matrices appear in the representation theory of  $\nsl$.  They will make another appearance in Section \ref{sec:FGduality} below.  
\end{remark}

\begin{proposition}\label{prop:second-main-result-independenceforstatedskeins}
Let $\badset\subset\sC$ be defined as in Theorem {\upshape\ref{thm:maintheorem}}.  For all $\qroot\in\sC\setminus\badset$, the monomials of the form $\tilde{B}_{1}^{m_1}\tilde{B}_{2}^{m_2}\cdots \tilde{B}_{n-1}^{m_{n-1}}\in\ssk$ for $m_i\in\nnegZ$ are linearly independent in the stated skein algebra $\ssk$ of the annulus.
\end{proposition}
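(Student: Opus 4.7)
The plan is to apply the quantum trace map $\qtr\colon\ssk\to\qtorus{\ttriang^L}\otimes\qtorus{\ttriang^R}$, which is an algebra homomorphism, to a putative non-trivial linear relation $\sum_I c_I\tilde{B}^I=0$ indexed by tuples $I=(m_1,\ldots,m_{n-1})\in\nnegZ^{n-1}$, and to derive a contradiction by tracking highest-degree monomials. The essential input is the analysis of Section \ref{sssec:annuluscomputationfinishing}: for $\qroot\in\sC\setminus\badset$, each $\qtr(\tilde{B}_k)$ has a unique monomial of maximal degree, with degree pair $(t_k^L,t_k^R)\in\Z^{\cornerlessdiscrete}\oplus\Z^{\cornerlessdiscrete}$ and non-zero coefficient, while every other monomial appearing in the expansion has strictly smaller degree pair in the product partial order.

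The next step is to lift this leading-term information from each $\tilde{B}_k$ to the products $\tilde{B}^I=\tilde{B}_1^{m_1}\cdots\tilde{B}_{n-1}^{m_{n-1}}$. Since the tensor product of quantum tori is itself a quantum torus, hence a domain, and since the Weyl-ordered degree is additive under multiplication of monomials, expanding $\qtr(\tilde{B}^I)=\prod_k\qtr(\tilde{B}_k)^{m_k}$ shows that $\qtr(\tilde{B}^I)$ possesses a unique maximal monomial of degree pair $\bigl(\sum_k m_k t_k^L,\,\sum_k m_k t_k^R\bigr)$ with non-zero coefficient, and that every other monomial appearing is strictly smaller in the product partial order.

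To convert this partial-order data into a usable total-order invariant, I would choose a linear functional $\psi(d^L,d^R)=\phi^L(d^L)+\phi^R(d^R)$, where $\phi^L,\phi^R\colon\R^{\cornerlessdiscrete}\to\R$ are strictly positive on every basis vector and generic enough that the $n-1$ numbers $\phi^L(t_k^L)+\phi^R(t_k^R)$ are linearly independent over $\mathbb{Q}$. Such a choice exists in the open positive cone because Lemma \ref{lem:tropicalindependence} ensures the pairs $(t_k^L,t_k^R)$ are linearly independent in $\R^{\cornerlessdiscrete}\oplus\R^{\cornerlessdiscrete}$. Then $\psi$ refines the product partial order, and the map $I\mapsto\sum_k m_k\bigl(\phi^L(t_k^L)+\phi^R(t_k^R)\bigr)$ is injective on $\nnegZ^{n-1}$. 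Choosing $I^\ast$ among $\{I:c_I\neq 0\}$ that maximizes this $\psi$-value, the unique maximal monomial of $\qtr(\tilde{B}^{I^\ast})$ has strictly greater $\psi$-value than every monomial appearing in $\qtr(\tilde{B}^I)$ for any other $I$ with $c_I\neq 0$, so it survives with non-zero coefficient in $\sum_I c_I\qtr(\tilde{B}^I)$, contradicting that this sum vanishes.

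The step I expect to be most delicate is verifying the strict monotonicity of degrees under multiplication --- that the maximal monomial of $\qtr(\tilde{B}^I)$ is both unique and has non-zero coefficient, with no cross-term from lower-degree summands producing another top-degree monomial or cancelling the expected one. This reduces to the combination of two facts: degrees add in the quantum torus domain, and strict partial-order dominance in each factor propagates to the product. Once these are in hand, the rest of the argument is formal.
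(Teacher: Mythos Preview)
Your proposal is correct and follows essentially the same approach as the paper: apply $\qtr$, use the leading-term computation of Section~\ref{sssec:annuluscomputationfinishing} together with additivity of degree to identify the top monomial of each $\qtr(\tilde{B}^I)$ as having degree pair $(\sum_k m_k t_k^L,\sum_k m_k t_k^R)$, and then invoke Lemma~\ref{lem:tropicalindependence} to conclude these top degrees are pairwise distinct. The only difference is cosmetic: the paper argues directly with the partial order---picking an $I^\ast$ whose top degree is maximal in the partial order and observing that no monomial from any other $\qtr(\tilde{B}^I)$ can match it---whereas you refine to a total order via a generic positive linear functional; both finish the argument equally well, and the paper's version is marginally more direct.
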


\begin{proof}
By the computation performed above, and since the quantum trace map $\qtr$ is an algebra homomorphism, we have already proved that the highest degree term of the quantum trace $\qtr(\tilde{B}_{1}^{m_1}\tilde{B}_{2}^{m_2}\cdots \tilde{B}_{n-1}^{m_{n-1}})$ has left degree $\sum_{k=1}^{n-1} m_k t^L_k$ with respect to $\qtorus{\ttriang^L}$ and right degree $\sum_{k=1}^{n-1} m_k t^R_k$ with respect to $\qtorus{\ttriang^R}$, these degrees which we remind are functions $\cornerlessdiscrete\to\nnegZ$.  In fact, we only need to consider, say, the $n-1$ right degree values on points $(n-a,0,a)\in\cornerlessdiscrete$ for $a=1,2,\dots,n-1$, as in the proof of Lemma \ref{lem:tropicalindependence}.  Suppose there were a non-trivial linear dependence relation among this family of monomial webs.  By Lemma \ref{lem:tropicalindependence}, the highest degrees of their quantum traces are distinct, so considering a monomial $\tilde{B}_{1}^{m_1}\tilde{B}_{2}^{m_2}\cdots \tilde{B}_{n-1}^{m_{n-1}}$ whose quantum trace has maximal degree with respect to the partial order on degrees, the coefficient of that monomial in the dependence relation must be zero, which is a contradiction.  
\end{proof}

\begin{proof}[Proof of Proposition {\upshape\ref{prop:second-main-result-independence}}]
First, by definition the inclusion linear map $\incl:\usk\to\ssk$ sends the unbased monomial webs $(\unbased{B}_{1})^{m_1}(\unbased{B}_{2})^{m_2}\cdots(\unbased{B}_{n-1})^{m_{n-1}}$ to the monomial webs $\tilde{B}_{1}^{m_1}\tilde{B}_{2}^{m_2}\cdots \tilde{B}_{n-1}^{m_{n-1}}$.  Since the latter are linearly independent, so are the former.  Then, the result for the based monomial webs $B_{1}^{m_1}B_{2}^{m_2}\cdots B_{n-1}^{m_{n-1}}$ follows by Proposition \ref{prop:spinstructures}.  
\end{proof}

\begin{proof}[Proof of Theorem {\upshape\ref{thm:maintheorem}}]
This is the combination of Propositions \ref{lem:lastsentenceofmaintheorem}, \ref{cor:basisgenerates}, and \ref{prop:second-main-result-independence}.
\end{proof}

\section{Applications}\label{sec:applications}

\subsection{Embedding the commutative polynomial algebra into quantum tori}\label{ssec:injectivity}

By Proposition \ref{prop:spinstructures} together with the proof of Theorem \ref{thm:maintheorem}, we immediately obtain:

\begin{corollary}\label{cor:firstapplication}
For the ideal triangulation $\idealtriang$ of the annulus $\ann$ as in Section {\upshape\ref{sssec:annuluscomputationsetup}} and for $\qroot\in\sC\setminus\badset$ the composition 
\begin{equation*}
\qtr\circ\incl:\sk\cong\usk\to\ssk\to\qtorus{\ttriang^L}\otimes\qtorus{\ttriang^R}
\end{equation*}
is an injective algebra homomorphism.  
\qed
\end{corollary}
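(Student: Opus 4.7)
The plan is to reduce the injectivity of the composition $\qtr\circ\incl$ to linear independence of its values on a $\C$-basis of $\sk$, and then import the tropical calculation already carried out in the proof of Proposition \ref{prop:second-main-result-independenceforstatedskeins}. First I would invoke Theorem \ref{thm:maintheorem}, which provides the explicit linear basis $\{B_1^{m_1}B_2^{m_2}\cdots B_{n-1}^{m_{n-1}}\}_{(m_1,\dots,m_{n-1})\in\nnegZ^{n-1}}$ of $\sk$ for every $\qroot\in\sC\setminus\badset$. Since $\qtr\circ\incl$ is an algebra homomorphism, injectivity is equivalent to showing that the images of these basis monomials in $\qtorus{\ttriang^L}\otimes\qtorus{\ttriang^R}$ are linearly independent.

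Next I would trace through the two maps explicitly on this basis. By Proposition \ref{prop:spinstructures}, the isomorphism $\sk\cong\usk$ sends each $B_1^{m_1}\cdots B_{n-1}^{m_{n-1}}$ to the corresponding unbased monomial times a nonzero scalar, and the inclusion $\incl:\usk\to\ssk$ then sends these to $\tilde{B}_1^{m_1}\tilde{B}_2^{m_2}\cdots\tilde{B}_{n-1}^{m_{n-1}}$ up to nonzero scalars. Thus it suffices to show that the elements
\begin{equation*}
\qtr(\tilde{B}_1^{m_1}\tilde{B}_2^{m_2}\cdots\tilde{B}_{n-1}^{m_{n-1}})\in\qtorus{\ttriang^L}\otimes\qtorus{\ttriang^R}
\end{equation*}
are linearly independent as $(m_1,\dots,m_{n-1})$ ranges over $\nnegZ^{n-1}$.

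But this is precisely what was established in Section \ref{sssec:annuluscomputationfinishing}: since $\qtr$ is an algebra homomorphism, the highest degree term of the quantum trace of the monomial above has left (resp.\ right) degree equal to $\sum_k m_k\, t^L_k$ (resp.\ $\sum_k m_k\, t^R_k$) in $\R^{\cornerlessdiscrete}$. By Lemma \ref{lem:tropicalindependence} the functions $\{t^R_k\}_{k=1}^{n-1}$ are linearly independent, so distinct tuples $(m_1,\dots,m_{n-1})$ yield distinct highest right degrees. Assuming a nontrivial linear dependence among the $\qtr(\tilde{B}_1^{m_1}\cdots\tilde{B}_{n-1}^{m_{n-1}})$, I would pick the monomial in its support whose tuple is maximal with respect to the partial order on degrees; its leading term cannot cancel with any other, forcing its coefficient to vanish, which is a contradiction.

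No new obstacle is expected here: essentially all the analytic content is packaged in the tropical leading-term computation of Section \ref{sssec:annuluscomputationfinishing} and in Lemma \ref{lem:tropicalindependence}, and the only thing left to do is assemble these with Theorem \ref{thm:maintheorem} and Proposition \ref{prop:spinstructures} to upgrade linear independence in $\ssk$ to injectivity of the composition defined on $\sk$.
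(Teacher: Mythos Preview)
Your proposal is correct and matches the paper's approach essentially verbatim: the paper simply states that the corollary follows from Proposition \ref{prop:spinstructures} together with the proof of Theorem \ref{thm:maintheorem}, and your argument unpacks exactly those ingredients, namely the monomial basis from the main theorem, the passage to $\tilde{B}_k$ via the spin-structure isomorphism and inclusion, and the highest-degree quantum trace computation of Section \ref{sssec:annuluscomputationfinishing} with Lemma \ref{lem:tropicalindependence}.
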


In other words, for every such $\qroot$ we obtain an explicit geometric embedding of the polynomial algebra $\sk\cong\C[x_1,x_2,\dots,x_{n-1}]$ into the quantum torus $\qtorus{\ttriang^L}\otimes\qtorus{\ttriang^R}$.

\begin{remark}
Given any quiver, the Weyl quantum ordering defines a linear isomorphism from the polynomial algebra generated by the vertices of the quiver to a quantum torus on the same number of generators, but this is an algebra isomorphism if and only if the quiver is trivial, or $\qroot=1$.  Compare to Corollary \ref{cor:firstapplication}, which embeds the polynomial algebra into a quantum torus on many more generators.  

As another observation, Corollary \ref{cor:firstapplication} implies that the highest degree monomial terms of the quantum traces of the monomial web basis elements commute in $\qtorus{\ttriang^L}\otimes\qtorus{\ttriang^R}$.  
\end{remark}

In the cases $n=2$ and $n=3$ \cite{bonahonMR2851072, douglasMR4685684, kim2022rm}, where the analogue of Corollary \ref{cor:firstapplication} is true for general surfaces $\surf$, this analogue gives a proof that the skein algebra has no non-trivial zero divisors.  Since polynomial algebras clearly satisfy this latter property, by Theorem \ref{thm:maintheorem} we immediately obtain:

\begin{corollary}\label{cor:nonilpotents}
For $\qroot\in\sC\setminus\badset$, the skein algebra $\sk\cong\C[x_1,x_2,\dots,x_{n-1}]$ of the annulus has no non-trivial zero divisors, in particular, has no non-trivial nilpotent elements.  \qed
\end{corollary}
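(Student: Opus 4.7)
The plan is to deduce this directly from Theorem \ref{thm:maintheorem}, so the proof is essentially a one-line invocation of standard commutative algebra. First, I would apply Theorem \ref{thm:maintheorem} to identify $\sk$ with the commutative polynomial algebra $\C[x_1,x_2,\dots,x_{n-1}]$ in $n-1$ variables over $\C$, via the algebra isomorphism sending $x_i$ to $B_i$.

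Next, I would invoke the standard fact that a polynomial algebra over a field is an integral domain. The quickest way is by induction on the number of variables: $\C$ is a field hence a domain, and if $R$ is a domain then so is $R[x]$, since the leading coefficient of a product of two non-zero polynomials is the product of the leading coefficients, which is non-zero in a domain. Applied $n-1$ times this gives that $\C[x_1,x_2,\dots,x_{n-1}]$ is an integral domain. Pulling back the isomorphism from the first step, this shows that $\sk$ has no non-trivial zero divisors.

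To deduce the non-existence of non-trivial nilpotent elements as a particular case, suppose $a\in\sk$ satisfies $a^k=0$ for some $k\geq 1$, and choose the minimal such $k$. If $k=1$ then $a=0$, while if $k\geq 2$ then $a\cdot a^{k-1}=0$ with $a^{k-1}\neq 0$ by minimality, forcing $a=0$ since $\sk$ has no zero divisors. Hence $\sk$ has no non-trivial nilpotent elements, completing the proof.

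There is no real obstacle here: the entire content of the corollary is carried by Theorem \ref{thm:maintheorem}, and the only thing to check is the elementary fact that polynomial rings over fields are integral domains. In particular, in contrast to the $n\leq 3$ cases mentioned in the paragraph before the statement, we do not need to invoke the injectivity of the quantum trace map (Corollary \ref{cor:firstapplication}) to rule out zero divisors; the commutative polynomial structure provided by the main theorem is sufficient on its own.
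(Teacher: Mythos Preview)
Your proof is correct and matches the paper's own argument exactly: the paper simply notes that polynomial algebras over $\C$ have no non-trivial zero divisors and invokes Theorem~\ref{thm:maintheorem}, which is precisely what you do. Your additional remarks (the explicit induction for integral domains and the nilpotent deduction) just spell out standard facts the paper leaves implicit.
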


By \cite[Corollary 20]{SikoraAlgGeomTop05}, see also \cite{bullock1997rings, przytycki1997skein} for the case $n=2$, the skein module $\sk(\man)$ of a three-dimensional manifold $\man$ (Remark \ref{rem:surfaces}) evaluated at $\qroot=1$ is isomorphic to the, a priori, unreduced coordinate ring over $\C$ of the $\nsl$ character variety of $\man$.  Said another way, the reduced coordinate ring of the $\nsl$ character variety of $\man$ is isomorphic to $\sk(\man)(\qroot=1)/\sqrt{0}$, namely the quotient of $\sk(\man)(\qroot=1)$ by its set of nilpotent elements.  By Corollary \ref{cor:nonilpotents}, since $\qroot=1\notin\badset$, taking $\man=\ann\times\intt$, the coordinate ring of the $\nsl$ character variety of the annulus is reduced and isomorphic to the skein algebra $\sk(\qroot=1)\cong\C[x_1,x_2,\dots,x_{n-1}]$.   

\subsection{Injectivity of the splitting map for the twice punctured sphere}\label{ssec:injectivityofthesplittingmap}

Let two surfaces $\surf$ and $\surf_\alpha$ be related by cutting along an ideal arc $\alpha$, as described at the end of Section \ref{split-ideal-triangulations-good-position-and-definition-of-the-quantum-trace-map}, and let $\ssk(\surf)$ and $\ssk(\surf_\alpha)$ be the associated stated skein algebras.  In \cite{leMR3827810, LeArxiv21} there is defined a natural algebra homomorphism $\ssk(\surf)\to\ssk(\surf_\alpha)$ called the splitting map.  It is defined as follows.  Let $\unbased{W}$ be a web in $\surf\times\intt$.  Perturb $\unbased{W}$ to be in a generic position with respect to the wall $\alpha\times\intt$, the most important property of which is that the points $\unbased{W}\cap(\alpha\times\intt)$ lie at distinct heights.  Let $\unbased{W}_\alpha$ be the resulting, not naturally stated, split web in $\surf_\alpha\times\intt$.  The image of $\unbased{W}$ in $\ssk(\surf_\alpha)$ under the splitting map is the sum of the compatibly stated split webs $\unbased{W}_\alpha(s_\alpha)$ varying over all compatible states $s_\alpha$ (Section \ref{split-ideal-triangulations-good-position-and-definition-of-the-quantum-trace-map}).  In particular, this image is independent of the perturbation, which could possibly swap heights thereby changing the isotopy class of the split web $W_\alpha$ in $\surf_\alpha\times\intt$.  

If $\alpha$ is a short ideal arc in $\surf$ starting and ending at the same puncture, so that the split surface $\surf_\alpha=\bar{\surf}\cup\monoangle$ is obtained from $\surf$ by cutting out an ideal monoangle $\monoangle$, yielding a surface $\bar{\surf}$ with an extra boundary component with one puncture, then by Theorem \ref{thm:monoanglebiangletriangle} we have the identifications $\ssk(\surf_\alpha)=\ssk(\bar{\surf}\cup\monoangle)=\ssk(\bar{\surf})\otimes\ssk(\monoangle)=\ssk(\bar{\surf})\otimes\C=\ssk(\bar{\surf})$.  

Note that yet another surface $\surf=\ann$ yielding the same skein algebra $\usk$ we have been studying throughout the paper is the twice punctured sphere.  In this case, $\surf$ has empty boundary, so we have $\usk=\ssk$.  Here is the second application of the main result.  

\begin{corollary}\label{cor:secondapplication}
If $\surf=\ann$ is the twice punctured sphere, and $\alpha$ is a short ideal arc in $\surf$ starting and ending at the same puncture, then for $\qroot\in\sC\setminus\badset$ the splitting map 
\begin{equation*}
\sk\cong\usk=\ssk\to\ssk(\surf_\alpha)=\ssk(\bar{\surf})
\end{equation*} 
of \cite{LeArxiv21} is injective.  Here $\bar{\surf}$ is the sphere with one internal puncture and one boundary component with a single puncture.  
\end{corollary}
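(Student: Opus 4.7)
The plan is to derive injectivity by post-composing the splitting map $\Theta: \ssk(\ann) \to \ssk(\bar{\surf})$ with an appropriate quantum trace map on $\ssk(\bar{\surf})$, reducing to the highest-degree analysis already performed in Section \ref{sssec:annuluscomputationfinishing}. By Theorem \ref{thm:maintheorem} combined with Proposition \ref{prop:spinstructures}, the monomials $\{\tilde{B}_1^{m_1} \cdots \tilde{B}_{n-1}^{m_{n-1}}\}$ form a basis of $\ssk(\ann)$, so it suffices to show that their images under $\Theta$ are linearly independent in $\ssk(\bar{\surf})$.

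First I would arrange $\alpha$ as a sufficiently small loop at a puncture of $\ann$ so that every basis web $\tilde{B}_k$ admits an isotopic representative disjoint from $\alpha$; by symmetry between the two punctures, one may take the puncture opposite to the one marked in Figure \ref{fig:basiselement}. For such a representative, $\Theta(\tilde{B}_k)$ is simply $\tilde{B}_k$ viewed as a stated web in $\bar{\surf}$ with empty boundary state data, and similarly for monomial products. Next, extend the ideal triangulation $\idealtriang$ of $\ann$ used in Section \ref{sssec:annuluscomputationsetup} to an ideal triangulation $\idealtriang_{\bar{\surf}}$ of $\bar{\surf}$ by adding finitely many extra triangles and biangles in a collar neighborhood of the new boundary, chosen disjoint from the supports of the webs. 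Let $\tau_{\bar{\surf}}$ denote the resulting quantum trace, taking values in a tensor product of quantum tori containing $\qtorus{\ttriang^L} \otimes \qtorus{\ttriang^R}$ as tensor factors.

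For a monomial web positioned to avoid the collar, the state sum defining $\tau_{\bar{\surf}} \circ \Theta$ restricts on the $\qtorus{\ttriang^L} \otimes \qtorus{\ttriang^R}$ tensor factors to precisely the state sum for $\qtr$ analyzed in Section \ref{sssec:annuluscomputationfinishing}, while the collar's triangles and biangles contribute only an overall fixed nonzero scalar since they see no strands. Consequently, the highest-degree argument from the proof of Proposition \ref{prop:second-main-result-independenceforstatedskeins}, which relied on Lemma \ref{lem:tropicalindependence} about the linear independence of the tropical coordinate functions $t^L_k$ and $t^R_k$, carries over verbatim to $\tau_{\bar{\surf}} \circ \Theta$ and shows that distinct monomials map to elements with distinct highest-degree terms. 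Hence $\tau_{\bar{\surf}} \circ \Theta$ distinguishes the basis monomials, forcing $\Theta$ to be injective.

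The main obstacle is rigorously justifying the locality claim underlying the previous paragraph, namely that the collar neighborhood's contribution factors out as a scalar on webs disjoint from it. This reduces to showing that the biangle co-units $\counit$ in the collar evaluate trivially on empty boundary data, and that the quantum local monodromy matrices in the added triangles collapse to identity contributions. Both should follow by direct inspection of the state-sum formula in Section \ref{sssec:definition-of-the-quantum-trace-map}, with the most delicate point being the new boundary biangle of $\bar{\surf}$ arising from the $\alpha$-cut, which is the one genuinely new piece compared to $\idealtriang$; handling it properly will likely require a small computation showing that its contribution, after summing over compatible states on empty strand data, is a nonzero scalar independent of the monomial web.
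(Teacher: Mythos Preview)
Your overall strategy---reduce to showing that the images of the monomial basis webs are linearly independent in $\ssk(\bar{\surf})$, then detect this via a quantum trace---is exactly the paper's. The gap is in the step where you construct $\tau_{\bar{\surf}}$: the surface $\bar{\surf}$ is the once-punctured monoangle, which by Section~\ref{sssec:good-position-of-a-web-with-respect-to-a-split-ideal-triangulation} admits \emph{no} ideal triangulation without self-folded triangles (it is explicitly listed there among the excluded surfaces). So there is no $\idealtriang_{\bar{\surf}}$ to extend to, and hence no quantum trace on $\ssk(\bar{\surf})$ within the paper's framework. Relatedly, the triangulation $\idealtriang$ of Section~\ref{sssec:annuluscomputationsetup} lives on the surface with \emph{two} boundary arcs (call it $\bar{\bar{\surf}}$), not on the twice-punctured sphere; passing from $\bar{\bar{\surf}}$ to $\bar{\surf}$ means \emph{capping off} a boundary arc by a monoangle, not adding a collar, and a monoangle cannot be triangulated. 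Your proposed ``collar'' region, and the locality worry in your final paragraph, simply do not arise.

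The paper's fix is the natural one and removes all the difficulty: rather than triangulate $\bar{\surf}$, apply the splitting map once more along a short arc at the remaining internal puncture to obtain $\ssk(\bar{\surf})\to\ssk(\bar{\bar{\surf}})$. The surface $\bar{\bar{\surf}}$ is precisely the punctured annulus of Section~\ref{sssec:annuluscomputationsetup}, already carrying $\idealtriang$ and the quantum trace $\qtr$. Since the basis webs can be isotoped off both cut arcs, their images in $\ssk(\bar{\bar{\surf}})$ are exactly the webs $\tilde{B}_1^{m_1}\cdots\tilde{B}_{n-1}^{m_{n-1}}$ whose linear independence is Proposition~\ref{prop:second-main-result-independenceforstatedskeins}. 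Independence in $\ssk(\bar{\bar{\surf}})$ then pulls back to independence in $\ssk(\bar{\surf})$, and the splitting map from the twice-punctured sphere is injective on the basis, hence everywhere.
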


\begin{proof}
 This is immediate once it is known that the monomial webs $(\unbased{B}_{1})^{\prime m_1}(\unbased{B}_{2})^{\prime m_2}\cdots (\unbased{B}_{n-1})^{\prime m_{n-1}}$ in the stated skein algebra $\ssk(\bar{\surf})$, which are the images under the splitting map of the monomial basis webs $(\unbased{B}_{1})^{m_1}(\unbased{B}_{2})^{m_2}\cdots (\unbased{B}_{n-1})^{m_{n-1}}$ in $\usk$, are linearly independent in  $\ssk(\bar{\surf})$.  This was essentially proved above as part of the proof of the main theorem.  Technically, this independence was proved for the annular surface $\bar{\bar{\surf}}$ obtained from $\bar{\surf}$ by cutting out another monoangle from the remaining internal puncture, but since the monomial webs in the surface $\bar{\bar{\surf}}$ are the images under the splitting map of the monomial webs in the surface $\bar{\surf}$, the latter are independent as well.    Compare the $n=3$ result \cite[Theorem 8.1]{higginsMR4609753}, or the earlier $n=2$ result of \cite{leMR3827810}, both proved for any surface $\surf$.
\end{proof}

\section{Fock--Goncharov duality}\label{sec:FGduality}

Given a function $f:\cornerlessdiscrete\to\R$, written $f\in\R^{\cornerlessdiscrete}$, for $i=1,2,\dots,n-1$ and $j=1,2,\dots,n-i$ the associated \define{top} (resp. \define{bottom left} or \define{bottom right}) \define{rhombus number} $r(f)^\mrm{t}_{ij}$ (resp. $r(f)^\mrm{bl}_{ij}$ or $r(f)^\mrm{br}_{ij}$) in $\R$ is defined by
\begin{equation*}
r(f)^\mrm{t}_{ij}=(f(n-i-j+1,i,j-1)+f(n-i-j,i,j)-f(n-i-j+1,i-1,j)-f(n-i-j,i+1,j-1))/n,
\end{equation*}
\begin{equation*}
r(f)^\mrm{bl}_{ij}=(f(i,j-1,n-i-j+1)+f(i,j,n-i-j)-f(i-1,j,n-i-j+1)-f(i+1,j-1,n-i-j))/n,
\end{equation*}
\begin{equation*}
r(f)^\mrm{br}_{ij}=(f(j-1,n-i-j+1,i)+f(j,n-i-j,i)-f(j,n-i-j+1,i-1)-f(j-1,n-i-j,i+1))/n.
\end{equation*}
Note that the four vertices entering the expression for each rhombus number are the vertices of a small rhombus in the discrete triangle $\discrete$, where the obtuse (resp. acute) vertices are added (resp. subtracted).  Only the top (resp. bottom left or bottom right) rhombus $r(f)^\mrm{t}_{(n-1)1}$ (resp. $r(f)^\mrm{bl}_{(n-1)1}$ or $r(f)^\mrm{br}_{(n-1)1}$) includes the acute corner vertex $(0,n,0)$ (resp. $(n,0,0)$ or $(0,0,n)$), where above by definition we put $f(v)=0$ on these corner vertices $v$.  

The \define{balanced Knutson--Tao lattice} (resp. \define{fan}) is the set of functions $f:\cornerlessdiscrete\to\R$ satisfying the property that all the rhombus numbers $r(f)^\mrm{t}_{ij}$ and $r(f)^\mrm{bl}_{ij}$ and $r(f)^\mrm{br}_{ij}$ are integers (resp. natural numbers)  \cite{KnutsonJAmerMathsoc99}.  Note that the balanced Knutson--Tao lattice (resp. fan) is a subgroup (resp. sub-monoid) of $\R^{\cornerlessdiscrete}$ under addition, in particular it contains the zero function.  We denote by $\fan$ the sub-monoid that is the intersection of the balanced Knutson--Tao fan with those functions $f$ satisfying $f(a,b,c)=0$ for all $a=0$.  

Let the functions $t^L_k$ and $t^R_k$ in $\nnegZ^{\cornerlessdiscrete}$ for $k=1,2,\dots,n-1$ be as in Section \ref{sssec:annuluscomputationfinishing}.  One checks $t^R_k\in\fan$ for each $k$.  By the symmetry $t^L_{n-k}=t^R_{k}$ this is also true for the $t^L_k$.  Moreover, we have the following specific values for the rhombus numbers:  $r(t^R_k)^\mrm{bl}_{kj}=1$ for all $j=1,2,\dots,n-k$ and all other rhombus numbers $r(t^R_k)^\mrm{bl}_{ij}$, $i\neq k$, and $r(t^R_k)^\mrm{t}_{ij}$ and $r(t^R_k)^\mrm{br}_{ij}$ are zero. 

A non-zero element $f$ of a monoid is \define{irreducible} if $f$ cannot be written as the sum of two non-zero elements of the monoid.  If it happens that this monoid is \define{positive}, namely a sub-monoid of $\nnegZ^m$, then it is not hard to see that the set of irreducible elements forms the unique minimum spanning set over $\nnegZ$ of the monoid, which we call the \define{Hilbert basis}; see, for instance, \cite[Section 3]{douglas2022tropical}.  

\begin{proposition}\label{thm:KTfan}
The sub-monoid $\fan$ of the balanced Knutson--Tao fan defined above is positive, namely $\fan\subset\nnegZ^{\cornerlessdiscrete}$, and its Hilbert basis consists of the $n-1$ elements $t^R_k$ for $k=1,2,\dots,n-1$.  Viewing $\fan\subset\nnegZ^{n-1}$ by the inclusion $f\mapsto(f(n-1,b,c), \dots, f(2,b',c'), f(1,b'',c''))$, which is independent of the choices of $b,c,\dots,b',c',b'',c''$, these $n-1$ Hilbert basis elements are linearly independent over $\R$, so span over $\R_{\geq0}$ a single real $(n-1)$-dimensional cone in $\R_{\geq0}^{n-1}$ containing $\fan$ as a sub-monoid.  
\end{proposition}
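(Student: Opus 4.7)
The plan is to first reduce everything to a statement about functions of the single variable $a$, then recognize that reduced statement as a standard one-dimensional discrete Dirichlet problem whose Green's function decomposition exhibits the $t^R_k$. The key observation is that any $f \in \fan$ must depend only on the first coordinate. I would prove this by induction on $a_0 \in \{0, 1, \ldots, n-1\}$, showing $f$ is constant along the row $\{a = a_0\} \cap \cornerlessdiscrete$. The base case $a_0 = 0$ is the defining hypothesis. For the inductive step, parameterize the row as $p_b = (a_0, b, n - a_0 - b)$ and consider, for $i = 1, \ldots, n - a_0$, the top rhombus $r(f)^{\mrm{t}}_{i,\, n-a_0+1-i}$ and the bottom-right rhombus $r(f)^{\mrm{br}}_{i,\, a_0}$. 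Each of these has exactly two of its four vertices on row $\{a = a_0 - 1\}$, where $f$ is by induction a common value, and two on row $\{a = a_0\}$. Substituting in the induction hypothesis makes the $a_0 - 1$ contributions cancel, so the non-negativity of the two rhombi gives respectively $f(p_i) \geq f(p_{i-1})$ and $f(p_{i-1}) \geq f(p_i)$. Running $i$ through its range forces $f(p_0) = f(p_1) = \cdots = f(p_{n-a_0})$.

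Once $f$ depends only on $a$, write $F(a) := f(a, b, c)$ and extend by $F(0) = F(n) = 0$, in accordance with the convention that $f$ vanishes on corners. Direct substitution into the rhombus formulas then shows that $r(f)^{\mrm{t}}_{ij} = r(f)^{\mrm{br}}_{ij} = 0$ for all $i, j$, and that
\begin{equation*}
r(f)^{\mrm{bl}}_{ij} \;=\; \tfrac{1}{n}\bigl(2F(i) - F(i-1) - F(i+1)\bigr),
\end{equation*}
independent of $j$. Setting $c_i := r(f)^{\mrm{bl}}_{i,1} \in \nnegZ$ for $i = 1, \ldots, n-1$, the datum of $f$ becomes equivalent to a solution of the Dirichlet Poisson problem $-F(i-1) + 2F(i) - F(i+1) = n\, c_i$ on $\{0, 1, \ldots, n\}$ with zero boundary condition. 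The corresponding Green's function is the well-known $G(i,k) = \min(i,k)(n - \max(i,k))/n$, whence
\begin{equation*}
F(i) \;=\; \sum_{k=1}^{n-1} c_k \min\{k(n-i),\, (n-k)i\} \;=\; \sum_{k=1}^{n-1} c_k\, t^R_k(i, \cdot, \cdot).
\end{equation*}

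This expresses every $f \in \fan$ uniquely as a non-negative integer combination of the $t^R_k$, yielding both the positivity $\fan \subset \nnegZ^{\cornerlessdiscrete}$ and the monoid isomorphism $\fan \cong \nnegZ^{n-1}$ sending $t^R_k \mapsto e_k$. Since the standard basis vectors are the unique Hilbert basis of $\nnegZ^{n-1}$, so are the $t^R_k$ of $\fan$. Their $\R$-linear independence is exactly Lemma~\ref{lem:tropicalindependence}, making the image (under the stated inclusion $f \mapsto (f(n-1, \cdot, \cdot), \ldots, f(1, \cdot, \cdot))$) a simplicial cone of dimension $n-1$ contained in $\R_{\geq 0}^{n-1}$.

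The hard part will be the row-constancy reduction of the first paragraph: it requires identifying the correct sandwiching pair of rhombus inequalities, verifying that the ranges of their indices cover every adjacent pair along the row, and carefully handling the three corner-adjacent rhombi $r(f)^{\mrm{t}}_{n-1,1}$, $r(f)^{\mrm{br}}_{n-1,1}$, $r(f)^{\mrm{bl}}_{n-1,1}$, each of which reduces cleanly via the corner convention. Everything subsequent is routine bookkeeping with the Dirichlet Green's function for the one-dimensional discrete Laplacian.
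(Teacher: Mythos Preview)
Your proof is correct and takes essentially the same approach as the paper's: establish row-constancy by sandwiching with nonnegative top and bottom-right rhombi, then invert the resulting one-dimensional discrete Laplacian (you via the Dirichlet Green's function, the paper via the matrix $T$ of Lemma~\ref{lem:tropicalindependence} together with the $t^L_k$/$t^R_k$ symmetries). One small bookkeeping slip: for fixed $i$ the rhombus $r(f)^{\mrm{br}}_{i,a_0}$ actually yields $f(p_{n-a_0-i}) \geq f(p_{n-a_0-i+1})$ rather than $f(p_{i-1}) \geq f(p_i)$, but running $i$ through its full range still produces the reverse chain of inequalities you need.
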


\begin{proof}
If we can show that $\fan$ is contained in the span over $\nnegZ$ of $\{t_k^R;k=1,2,\dots,n-1\}\subset\nnegZ^{\cornerlessdiscrete}$, this will establish that $\fan$ is positive.  That these $n-1$ elements form the Hilbert basis of $\fan$, and the second statement of the proposition, in particular that the proposed inclusion $\fan\to\nnegZ^{n-1}$ is indeed injective, then follows by the linear independence property of Lemma~\ref{lem:tropicalindependence}.  

So assume $f\in\fan\subset\R^{\cornerlessdiscrete}$.  We want to produce $c_k\in\nnegZ$ such that $f=\sum_k c_k t^R_k$.  By definition of $\fan$, the rhombus numbers $r(f)^\mrm{bl}$, $r(f)^\mrm{t}$, $r(f)^\mrm{br}$ are in $\nnegZ$, and $f(a,b,c)=0$ for $a=0$.  We claim that $r(f)^\mrm{bl}_{ij}$ is independent of $j$ and that we can take $c_k=r(f)^\mrm{bl}_{kj}$.  

We begin by considering the rhombus numbers $r(f)^\mrm{t}_{i(n-i)}$ and $r(f)^\mrm{br}_{i1}$, which hug the edge of the triangle between the top and bottom right vertices.  Each such rhombus has an acute and obtuse vertex lying on this edge with the same value under $f$, namely zero in this case, and each adjacent pair $(1,i-1,n-i),(1,i,n-i-1)$ of vertices is a common side of a unique top and bottom right rhombus, namely $r(f)^\mrm{t}_{i(n-i)}$ and $r(f)^\mrm{br}_{(n-i)1}$.  These rhombus numbers are $\pm (f(1,i-1,n-i)-f(1,i,n-i-1))/n$.  Since they are both in $\nnegZ$ by assumption, they are zero, so we have $f(1,i-1,n-i)=f(1,i,n-i-1)$ for all $i$.  That is, $f(1,b,c)$ is constant, independent of $b,c$.  By induction on $a$, the same argument implies $f(a,b,c)$ is independent of $b,c$.  Abusing notation, write $f(a)=f(a,b,c)$.  It follows that the rhombus number $r(f)^\mrm{bl}_{kj}=(-f({k-1})+2f(k)-f({k+1}))/n$ is independent of $j$, so call it $c_k$.  This completes the first part of the claim.  

We gather we have a linear system $\bmat c_{n-1}\\\vdots\\c_1\emat=T^{-1}\bmat f(1)\\\vdots\\f({n-1})\emat$ where the matrix $T^{-1}$ appeared in the proof of Lemma \ref{lem:tropicalindependence}.  Abusing notation and putting $t^R_k:\R\to\R$ and $t^L_k:\R\to\R$ as in the proof of the same lemma, solving this system for the values of $f$, 
\begin{align*}
\bmat f(1)\\\vdots\\f({n-1})\emat
&=T\bmat c_{n-1}\\\vdots\\c_1\emat
=\sum_k c_{n-k}\bmat t^R_k(n-1)\\\vdots\\t^R_k(1)\emat
=\sum_k c_{n-k}\bmat t^L_k(1)\\\vdots\\t^L_k(n-1)\emat
\\&=\sum_k c_{n-k}\bmat t^R_{n-k}(1)\\\vdots\\t^R_{n-k}(n-1)\emat
=\sum_k c_k\bmat t^R_{k}(1)\\\vdots\\t^R_{k}(n-1)\emat
\end{align*}
where the third and fourth equalities use the symmetries discussed at the beginning of Section \ref{sssec:annuluscomputationfinishing}.  In other words, $f=\sum_k c_k t^R_k$, completing the second part of the claim.    
\end{proof}

We have so far been discussing the positive sub-monoid $\fan\subset\nnegZ^{\cornerlessdiscrete}$ associated to the discrete triangle.  Consider now the triangulated annulus $\ann$ as in Section {\upshape\ref{sssec:annuluscomputationsetup}}.  Associate a copy $\fan^L$ (resp. $\fan^R$) of $\fan$ to the left (resp. right) triangle $\ttriang^L$ (resp. $\ttriang^R$) of the ideal triangulation $\idealtriang$ of $\ann$.  The \define{balanced Knutson--Tao fan for the annulus}, denoted $\fan(\ann,\idealtriang)$, is the sub-set of elements $(f^L,f^R)$ of the Cartesian product $\fan\times\fan$ satisfying $f^L(a,0,n-a)=f^R(n-a,0,a)$ and $f^L(a,n-a,0)=f^R(n-a,a,0)$ for all $a$.  

For example, the pair $(t_k^L,t_k^R)$ is in $\fan(\ann,\idealtriang)$ for all $k=1,2,\dots,n-1$.  The following is~clear:

\begin{corollary}\label{cor:ktfanofannulus}
The balanced Knutson--Tao fan $\fan(\ann,\idealtriang)\subset\fan\times\fan$ for the annulus consists of the points $(\sum_{k=1}^{n-1} m_k t_k^L,\sum_{k=1}^{n-1} m_k t_k^R)$ for $m_k\in\nnegZ$.  In particular, the natural projection of $\fan(\ann,\idealtriang)$ to either component $\fan$ is a bijection.  So, by Proposition {\upshape\ref{thm:KTfan}} there is an inclusion of $\fan(\ann,\idealtriang)$ as a sub-monoid of a single real $(n-1)$-dimensional cone in $\R_{\geq0}^{n-1}$ spanned over $\R_{\geq0}$ by the $n-1$ Hilbert basis elements $(t_k^L,t_k^R)$ of $\fan(\ann,\idealtriang)$.  \qed
\end{corollary}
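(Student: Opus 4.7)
The plan is to bootstrap Proposition \ref{thm:KTfan} by exploiting the observation that the two gluing conditions defining $\fan(\ann,\idealtriang)$ are already built into the Hilbert basis elements of the single-triangle fan. Concretely, Proposition \ref{thm:KTfan} provides, for any $(f^L,f^R) \in \fan\times\fan$, unique non-negative integers $a_k,b_k$ with $f^L = \sum_k a_k t_k^L$ and $f^R = \sum_k b_k t_k^R$, so the corollary will follow once I show that membership in $\fan(\ann,\idealtriang)$ is equivalent to $a_k = b_k$ for each $k = 1, \dots, n-1$.

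The first step would be the forward direction, which reduces to restricting each basis element to the two gluing edges. Using $t_k^L(a,b,c) = \min\{ka, (n-k)(n-a)\}$ and $t_k^R(a,b,c) = \min\{k(n-a), (n-k)a\}$, each depending only on the first coordinate, a direct simplification would yield $t_k^L(a,0,n-a) = t_k^L(a,n-a,0) = t_k^R(n-a,0,a) = t_k^R(n-a,a,0) = \min\{ka, (n-k)(n-a)\}$. This is essentially the symmetry $t_k^L(a,b,c) = t_k^R(n-a,b,c)$ noted at the start of Section \ref{sssec:annuluscomputationfinishing}. Consequently, any pair $(\sum_k m_k t_k^L, \sum_k m_k t_k^R)$ with $m_k \in \nnegZ$ would satisfy both gluing conditions term by term and hence lie in $\fan(\ann,\idealtriang)$.

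For the converse, starting from $(f^L,f^R) \in \fan(\ann,\idealtriang)$ with its unique decomposition, I would plug into either gluing condition and evaluate at the $n-1$ gluing points. Using the identity above, this produces the linear system $\sum_k (a_k - b_k)\min\{ka, (n-k)(n-a)\} = 0$ as $a$ ranges over $1,\dots,n-1$. The coefficient matrix is, up to the harmless relabeling $k \mapsto n-k$ and the symmetry $t_k^L = t_{n-k}^R$, precisely the matrix $T^R$ shown to be invertible in the proof of Lemma \ref{lem:tropicalindependence}. This forces $a_k = b_k$ for all $k$ and completes the parametrization.

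With the parametrization established, the remaining assertions follow at once: projection of $\fan(\ann,\idealtriang)$ to either factor is a bijection by the uniqueness half of Proposition \ref{thm:KTfan}, and composing with the embedding-into-cone from that same proposition realizes $\fan(\ann,\idealtriang)$ as a sub-monoid of a single $(n-1)$-dimensional real cone in $\R_{\geq 0}^{n-1}$ with Hilbert basis $\{(t_k^L,t_k^R)\}_{k=1}^{n-1}$. I do not anticipate any serious obstacle here: once the elementary boundary identity $t_k^L(a,0,n-a) = t_k^R(n-a,0,a)$ is in hand, the entire corollary reduces mechanically to Proposition \ref{thm:KTfan} and Lemma \ref{lem:tropicalindependence}.
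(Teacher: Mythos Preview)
Your proposal is correct and is exactly the natural unpacking of what the paper leaves implicit: the paper simply writes ``The following is clear:'' and appends \qedsymbol, relying on Proposition~\ref{thm:KTfan} together with the symmetries $t_k^L(a,b,c)=t_k^R(n-a,b,c)$ and $t_{n-k}^L=t_k^R$ already recorded at the start of Section~\ref{sssec:annuluscomputationfinishing}. Your argument---decompose each component via the Hilbert basis, use the boundary symmetry to reduce the gluing conditions to a linear system in the $t_k^L(a)$, and invoke the invertibility from Lemma~\ref{lem:tropicalindependence}---is precisely how one would justify that claim, so there is nothing to correct or contrast.
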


Note (Proposition \ref{prop:second-main-result-independenceforstatedskeins}, proof of Proposition \ref{prop:second-main-result-independence}) that, for $\qroot\in\sC\setminus\badset$, the points of $\fan(\ann,\idealtriang)$ are precisely the pairs of left and right highest degrees of the quantum traces $\qtr\circ\incl((\unbased{B}_{1})^{m_1}(\unbased{B}_{2})^{m_2}\cdots(\unbased{B}_{n-1})^{m_{n-1}})\in\qtorus{\ttriang^L}\otimes\qtorus{\ttriang^R}$ of the monomial basis webs for the skein algebra $\usk\cong\sk$ appearing in the main theorem.  

\begin{remark}
We view Theorem \ref{thm:maintheorem} and Corollary \ref{cor:ktfanofannulus} together as a quantum topological manifestation, in the setting of the annulus, of quantum Fock--Goncharov duality, rather, quantum Fock--Goncharov--Shen duality \cite{davisonMR4337972, fockMR2233852, fockMR2567745, goncharovMR3418241, goncharov2022quantum, grossMR3758151}; see also \cite{douglasMR4685684, douglas2022tropical, kim2022rm, shen2023intersections} for a study of the $n=3$ case for any punctured surface.  

For a punctured surface $\surf$, without boundary for simplicity, Fock--Goncharov associate two dual moduli spaces $\mcal{X}_{\pngl}$ and $\mcal{A}_{\nsl}$.  These are positive spaces, so their points with respect to a  semi-field are defined.  For example, for $n=2$ then the points $\mcal{X}_{\ptwogl}(\R_{>0})$ and $\mcal{A}_{\twosl}(\R_{>0})$ over $\R_{>0}$ recover the classical Teichm\"{u}ller space, rather, enhanced Teichm\"{u}ller space, and Penner's decorated Teichm\"{u}ller space of the surface, respectively, and recover so-called higher Teichm\"{u}ller spaces when $n=n$.  For the  semi-field of tropical integers, denoted $\Z^t$, where multiplication (resp. addition) in $\Z^t$ is the usual addition (resp. max) operation in $\Z$, the points $\mcal{X}_{\pngl}(\Z^t)$ and $\mcal{A}_{\nsl}(\Z^t)$ are also defined.  To work in coordinates, one needs to choose additional topological data such as an ideal triangulation $\idealtriang$ of $\surf$.

Classical Fock--Goncharov duality, so when $\qroot=1$, asserts that the tropical integer points $\mcal{A}_{\nsl}(\Z^t)$ index the elements of a `canonical' basis of the coordinate ring $\mcal{O}(\mcal{X}_{\pngl})$, and vice-versa.  This basis should satisfy, for example, strong positivity properties.  Fock--Goncharov--Shen duality asserts that the positive tropical integer points $\mcal{A}^+_{\pngl}(\Z^t)$, where the positivity is taken with respect to the Goncharov--Shen potential, index the elements of a canonical basis of the coordinate ring $\mcal{O}(\mcal{R}_{\nsl})$ of the character variety.   

There are also quantum versions of this duality.  Since the skein algebra $\sk(\surf)$ is a quantization of the coordinate ring $\mcal{O}(\mcal{R}_{\nsl})$ of the character variety, Fock--Goncharov--Shen duality suggests that the positive tropical integer points $\mcal{A}^+_{\pngl}(\Z^t)$ should index the elements of a canonical basis of the skein algebra, in particular a canonical topological basis of $\mcal{O}(\mcal{R}_{\nsl})$ when $\qroot=1$.  Indeed, following Goncharov--Shen's lead, upon choosing an ideal triangulation $\idealtriang$ of $\surf$ we can think of the corresponding balanced Knutson--Tao fan  as a model for $\mcal{A}^+_{\pngl}(\Z^t)$.  When the surface $\surf=\ann$ is the annulus with triangulation $\idealtriang$ as above, then this model for $\mcal{A}^+_{\pngl}(\Z^t)$, rather, the subset constrained to be zero on the boundary, is precisely the balanced Knutson--Tao fan $\fan(\ann,\idealtriang)$ appearing in Corollary \ref{cor:ktfanofannulus}, whose points index the monomial basis elements of the skein algebra $\sk(\ann)$ appearing in Theorem \ref{thm:maintheorem} via their highest degrees with respect to the quantum trace map $\qtr$.  That these points appear as the highest polynomial degrees of the basis elements, as detected by the quantum trace map, is another  indication of the tropical geometric nature of this duality. 
\end{remark}

\bibliographystyle{plain} 
\bibliography{references.bib}
\end{document}